\documentclass{article}%
\usepackage{amsmath}
\usepackage{amsfonts}
\usepackage{amssymb}
\usepackage{graphicx}%
\usepackage{color}
\usepackage{amsthm,amstext,bm}
 \usepackage{parskip}
\usepackage{hyperref}

\newcommand{\R}{\mathbb{R}}

\newcommand{\btau}{\bar{\tau}}

\newcommand{\C}{\mathbb{C}}
\newcommand{\Z}{\mathbb{Z}}

\newcommand{\N}{\mathbb{N}}

\newcommand{\eps}{\varepsilon}
\newcommand{\bx}{\bar{x}}
\newcommand{\bu}{\bar{u}}
\newcommand{\bv}{\bar{v}}
\newcommand{\bw}{\bar{w}}
\newcommand{\bb}{\bar{\beta}}

\newcommand{\bU}{\bar{U}}

\newcommand{\bydef}{\stackrel{\mbox{\tiny\textnormal{\raisebox{0ex}[0ex][0ex]{def}}}}{=}} 

\usepackage{bbm}
\usepackage[scr=boondoxo,scrscaled=1.05]{mathalfa}

\newtheorem{theorem}{Theorem}

\newtheorem{lemma}[theorem]{Lemma}

\newtheorem{proposition}[theorem]{Proposition}
\newtheorem{remark}[theorem]{Remark}

\title{Computer-assisted proofs of periodic solutions of state-dependent delay equations beyond perturbative regimes}

\begin{document}

\author{
Jan Bouwe van den Berg 
\thanks{VU Amsterdam, Department of Mathematics, De Boelelaan 1111,
            1081 HV Amsterdam, The Netherlands, {\tt janbouwe.vanden.berg@vu.nl}}
\and
Maxime Breden
\thanks{CMAP, CNRS, \'Ecole polytechnique, Institut Polytechnique de Paris, 91120 Palaiseau, France, {\tt maxime.breden@polytechnique.edu}}
\and
Matthieu Cadiot\thanks{CMAP, CNRS, \'Ecole polytechnique, Institut Polytechnique de Paris, 91120 Palaiseau, France, {\tt matthieu.cadiot@polytechnique.edu}}
\and
Kevin E. M. Church 
\thanks{Centre de Recherches Mathématiques, Université de Montréal, 
Pavillon André-Aisenstadt,
2920 chemin de la Tour,
Montréal, QC, H3T 1J4, Canada, \texttt{kevin.church@umontreal.ca}}
\and
Jean-Philippe~Lessard\thanks{%
	Department of Mathematics and Statistics, McGill University,
	Montreal, QC, H3A 0B9, Canada, \texttt{jp.lessard@mcgill.ca}}
}

\maketitle


\begin{abstract}
Rigorous existence results for periodic solutions of state-dependent delay differential equations (SDDEs) are scarce outside perturbative regimes, where the equation is close to an ordinary or constant-delay differential equation. We develop a computer-assisted framework that proves the existence of periodic solutions of SDDEs directly. The numerical approximation is a trigonometric polynomial, and a true periodic solution nearby is obtained by a Newton--Kantorovich argument in the space of $C^1$ periodic functions. The main difficulty lies in the terms in which the solution is evaluated at a time that itself depends on the solution. We obtain rigorous enclosures of their Fourier coefficients by adapting validated discrete Fourier transform techniques for analytic nonlinearities. We apply the framework to three representative examples of increasing complexity: a periodically forced toy model, an autonomous scalar equation with two state-dependent delays, and a threshold-type delay model from operon dynamics in gene regulation. To our knowledge, these are the first computer-assisted proofs of existence of periodic solutions of SDDEs outside perturbative regimes. This opens the way to computer-assisted proofs for SDDEs more broadly. All computer-assisted proofs, including the requisite codes and the approximate objects, are accessible on GitHub at \cite{code_proof}.
\end{abstract}

\begin{center}
{\bf \small Keywords:}
{\small State-dependent delay equations, periodic orbits, Fourier expansions, \\ composition operator, Newton--Kantorovich, computer-assisted proofs}

{\bf \small MSC (2020):}
{\small 34K13, 34K43 (Primary); 65G20, 65G40, 65J15, 65T50, 42A10, 92C42 (Secondary)}
\end{center}

\section{Introduction}

State-dependent delay differential equations (SDDEs) 
%
%
describe processes in which the instantaneous rate of change of the system depends not only on its current state but also on a past state, with the time lag connecting present and past determined by the current state itself. Such equations arise naturally in diverse applications. A classical example comes from electrodynamics: the interaction between charged particles is mediated by electromagnetic signals that propagate at finite speed, so the relevant delay is given by the distance between particles divided by the speed of light, a quantity that evolves with the system itself (see, e.g., \cite{MR2536104,MR2639911,MR3456815}). Similar mechanisms appear in population dynamics, where maturation or reproduction times depend on population density (see, e.g., \cite{MR4032632,CassidyHumphries2020,MR3456830}), in physiology, where differentiation delays vary with blood cell concentrations (see, e.g., \cite{MAHAFFY1998135,MR3571295}), and in engineering systems such as machining, where the delay is linked to the tool’s instantaneous speed (see, e.g., \cite{Johnson_1972,Insperger_Turi_2007}). We refer the interested reader to the comprehensive survey report \cite{MR2457636} for more general background, details and applications. 

While SDDEs provide a flexible and realistic modeling framework, their mathematical analysis is notoriously challenging, especially due to the presence of delays that depend on the evolving state through nonlinear composition terms. In contrast to ordinary differential equations (ODEs), parabolic partial differential equations (PDEs), or delay differential equations (DDEs) with constant delays, SDDEs do not, in general, generate a well-behaved semiflow on the classical phase spaces $C^0$ or $C^1$; under mild conditions, they instead generate a semiflow on an infinite-dimensional solution manifold \cite{MR2019242,MR4261206,MR3642771,MR1345150,MR2084753}. This distinction has far-reaching consequences: several basic questions remain open, including the regularity of the semiflow and the existence and structure of solution manifolds and invariant manifolds \cite{MR3748505}. 

In applications, periodic solutions are among the most important dynamical features of SDDEs. Such solutions represent recurrent phenomena and play a central role in understanding the long-term dynamics of the system. From a theoretical perspective, however, their analysis is highly nontrivial. For instance, while periodic solutions of analytic DDEs with constant delays are themselves analytic \cite{MR322315}, the question of analyticity versus nonanalyticity for SDDEs is considerably more subtle \cite{MR3229655}, although analyticity is known for threshold-type delays \cite{Krisztin2014} (see Section~\ref{sec:threshold_SDDE}). On the computational side, several approaches have recently been developed to study periodic orbits in SDDEs. These include numerical investigations of their emergence through Hopf bifurcations \cite{humphries_2017_resonance}, convergence results for pseudo-spectral methods \cite{ando2024}, and the development of \emph{a posteriori} validation techniques \cite{MR4856722}. Progress has also been made using the parameterization method for periodic orbits in state-dependent delayed perturbations of ODEs \cite{MR4469141,MR4308692,MR4287353,gimeno2025persistencehyperbolicsolutionsodes}, and even for quasi-periodic solutions under small state-dependent delays and exponential dichotomy assumptions \cite{MR3501842,MR3736145}.

Despite the importance of SDDEs in applications, rigorous methods for proving the existence of periodic solutions remain limited, particularly in regimes where the equations cannot be viewed as perturbations of ODEs and DDEs with constant delays. In the present work, we develop a constructive computer-assisted framework for the validation of periodic orbits in genuinely state-dependent delay equations. To the best of our knowledge, these results constitute the first rigorous existence proofs of periodic solutions obtained through a fully constructive approach in this setting. Beyond establishing the existence of periodic orbits of SDDEs away from perturbative regimes, our work also provides new tools for applications in which oscillatory dynamics play a central role. We present the methodology through three examples of increasing complexity. The first is a periodically forced non-autonomous scalar toy model designed to illustrate the main ideas in the simplest possible setting:
\begin{equation}\label{eq : toy model intro}
u'(t) = - u(t) - \eps_1 u(t-1-\eps_2u(t)) + \phi(t),
\end{equation}
where $\phi(t) \bydef \cos(t) + \sin(t)$ and $\eps_1,\eps_2 \in \R$. In this example, the forcing $\phi$ fixes the period a priori, so that no additional equation for the period is required. The second model is the autonomous scalar SDDE introduced in \cite{HumphriesDeMasi2012} (see also \cite{humphries_2017_resonance}),
\begin{equation}\label{eq : autonomous SDDE_intro}
u'(t) = - \gamma u(t) - \kappa_1 u (t - a_1 - c_1 u(t)) - \kappa_2 u(t - a_2 - c_2 u(t)),
\end{equation}
where $\gamma,\kappa_1,\kappa_2,a_1,a_2,c_1,c_2 \in \R$ are parameters. In contrast with the first example \eqref{eq : toy model intro}, the period is now unknown and must be solved for simultaneously with the periodic orbit, which naturally leads to the introduction of a phase condition. Our third example is a threshold-type SDDE arising in operon dynamics in biology, studied in \cite{MR4959313},
\begin{equation}\label{eq : appli original_intro}
w'(t) = b e^{-\mu \tau(t)} \frac{v(w(t))}{v(w(t - \tau(t)))} q(w(t-\tau(t))) - \gamma w(t), 
\qquad
a = \int_{- \tau(t)}^0 v(w(s+t)) ds,
\end{equation}
where $a,b,\mu,\gamma>0$ are parameters, $q$ and $v$ are Hill functions (see Section~\ref{sec:threshold_SDDE} for details), and $\tau(t)$ is a threshold delay defined implicitly through the state variable $w$. As explained in Section~\ref{sec:threshold_SDDE}, the Leibniz rule is used to differentiate the threshold relation and reformulate \eqref{eq : appli original_intro} as a coupled system of SDDEs. Studying example \eqref{eq : appli original_intro} introduces three additional layers of difficulty. First, the problem becomes genuinely vector-valued, requiring the simultaneous computation of two coupled functions $w$ and $\tau$. Second, the reformulation process produces an additional scalar constraint, so that one must solve not only for the periodic orbit and its period, but also for an unfolding parameter. Third, the composition terms themselves become substantially more intricate, since nonlinear Hill functions must now be evaluated at state-dependent arguments, and their complex singularities must be avoided on a strip around the real axis to guarantee the required analyticity (see the terms $v(w(t - \tau(t)))$ and $q(w(t-\tau(t)))$ in \eqref{eq : appli original_intro}).

\begin{remark}
Although we focus on the three concrete examples \eqref{eq : toy model intro}, \eqref{eq : autonomous SDDE_intro} and \eqref{eq : appli original_intro}, each is representative of a broader class of SDDEs, and the techniques developed here are intended to be widely applicable. Since this appears to be the first constructive framework for rigorously validating periodic solutions in SDDEs, we believe that presenting the methodology in detail through carefully chosen examples is more valuable than pursuing the greatest possible level of generality at this stage.
\end{remark}

The approach presented in this work belongs to the broader category of computer-assisted proofs (CAPs) in nonlinear analysis (see, e.g., \cite{MR1420838,MR2807595,MR3444942,MR3971222,MR3990999} for general references). It is worth noting that CAPs for DDEs with constant delays are now well-established. Notable examples include rigorous existence proofs of periodic orbits in DDEs via Fourier series \cite{MR2592879,MR2871794,MR3736134,MR4413373,MR4063472}, the combined works resolving Wright’s conjecture \cite{MR3183042,MR3779642} and Jones’ conjecture \cite{MR3705681,MR3912700}, validated integration methods for Cauchy problems \cite{MR3875841,MR4783644,MR4333389}, computations of unstable manifolds \cite{MR4413389,kepley2024}, periodic solutions of impulsive delay equations \cite{MR4330857}, rigorous computations of eigenvalues \cite{MR4333397,MR4128805}, Hopf bifurcations \cite{MR4337868}, Hopf bubbles \cite{MR4825289}, and even chaotic dynamics via Sharkovskii's theorem \cite{MR4883129}.

Despite the considerable success of CAPs in the study of DDEs with constant delays, their application to SDDEs has remained limited. This is largely due to the intrinsic difficulty posed by composition-like terms of the form $\mathcal{G}(u(t - \tau(t)))$, where $\mathcal{G}$ is analytic and $u,\tau$ are periodic, which complicates both analytical and numerical approaches. Nevertheless, recent developments indicate that CAPs are beginning to play a role in this setting. Notable examples include the validated integration of differential equations with state-dependent delays \cite{MR4471505} and the use of perturbative methods to rigorously establish the existence of periodic solutions in state-dependent perturbations of ordinary differential equations \cite{MR4619367}. Related techniques have been successfully applied to problems with similarly challenging nonlinearities, often involving composition operators. For instance, the maximum modulus principle for holomorphic functions has been used to prove the existence of periodic solutions in the three-body problem \cite{MR2259202}, FFT-based methods have been employed to handle Fourier coefficients of analytic nonlinearities in KAM theory \cite{MR3709329}, and CAPs have recently been applied to demonstrate the existence of traveling waves in the two-dimensional suspension bridge equation with exponential nonlinearities \cite{MR4930548} as well as to study fixed points of renormalization (composition) operators via Chebyshev interpolation \cite{breden2024}. For further insight into the interaction between self-compositions and spectral methods, the interested reader is referred to Section 1.3 of \cite{MR4471505}.

Our approach builds on earlier computer-assisted proof techniques, but introduces several key ingredients tailored specifically to state-dependent delay equations. We represent candidate periodic solutions using Fourier expansions and work within a low-regularity Banach space of $C^1$ functions. Starting from numerical approximations $\bu(t)$ and $\btau(t)$, given as trigonometric polynomials, we derive rigorous enclosures for the Fourier coefficients of nonlinear state-dependent compositions of the form $\mathcal{G}(\bu(t - \btau(t)))$. This is achieved by adapting the validated discrete Fourier transform techniques of \cite{vandenberg2026spectral} for analytic nonlinearities: analyticity of the composition on a complex strip yields geometric decay of its Fourier coefficients, and the finitely many remaining coefficients are enclosed via trigonometric interpolation, with a rigorous control of the aliasing error. The ingredient specific to SDDEs is the rigorous evaluation of the composition on the complex strip, where the state-dependent argument $t-\btau(t)$ is itself enclosed using interval FFTs. The existence proof is then carried out via a Newton--Kantorovich argument formulated in the $C^1$ setting. Although the analysis takes place in this low-regularity space, the Fourier representation plays a central role throughout: it provides explicit computable bounds in the Newton--Kantorovich framework in terms of the Fourier coefficients of the numerical approximations, and it allows the relevant linear operators on $C^1$ functions to be effectively analyzed through their action on Fourier modes.

This last point relies on a tool that we believe is of independent interest. The Newton--Kantorovich estimates require sharp bounds on the norms of certain linear operators on $C^0$, equipped with the supremum norm, for instance operators built from an approximate inverse of the Jacobian at the numerical approximation. These operators are represented by finite matrices of Fourier coefficients. In the weighted $\ell^1$ spaces often used in Fourier-based computer-assisted proofs, the norm of such a matrix is a simple expression in its entries, whereas on $C^0$ it is much harder to compute. The key step is to write such an operator as an integral operator whose kernel is a trigonometric polynomial in two variables, with the matrix entries as coefficients. Its norm on $C^0$ is then the maximum over $t$ of the $L^1$ norm of the kernel in the second variable $s$. To bound this quantity we use classical results that control how much a trigonometric polynomial can vary between the points of a uniform grid, and that express it in terms of its values on the grid by an interpolation-type formula. We then only need the values of the kernel at the grid points, which are computed efficiently with the FFT. This is particularly useful for the large matrices needed in Section~\ref{sec:threshold_SDDE}, and complements the more direct approach of enclosing the kernel on small boxes with interval arithmetic, which works well for matrices of moderate size. None of these steps is new in itself, and some of the estimates we use are a century old, but we are not aware of this combination having been used before to compute such operator norms in computer-assisted proofs. The resulting bound is rigorous and close to sharp: the overestimation factor tends to $1$ as the grid is refined, and in our computations it stays below $1.5$. We expect this to be useful in other computer-assisted proofs that use norms of $C^0$ type rather than weighted $\ell^1$ norms of Fourier coefficients. The details are given in Section~\ref{sec:C0_operator_norms}.

Our paper is organized as follows. In Section~\ref{sec:forced_toy_model}, we present the full framework in detail for the periodically forced SDDE \eqref{eq : toy model intro}. This first section serves as the prototype for the two subsequent examples in Section~\ref{sec : autonomous SDDE} and Section~\ref{sec:threshold_SDDE}. More specifically, in Section~\ref{sec:zero_finding_problem_toy} we formulate the problem as a zero-finding equation of the form $\mathbb{F}(x)=0$. Section~\ref{sec:newton-kantorovich} introduces the Newton--Kantorovich scheme used to solve this equation. In Section~\ref{sec : approximate objects and projections}, we construct the finite-dimensional projections and describe how the numerical data are obtained, including the approximation $\bx$ of the periodic solution and an approximate inverse of the Fréchet derivative $D\mathbb{F}(\bx)$. Section~\ref{sec:Fourier_series_representation} develops the Fourier representation of the operators defining~$\mathbb{F}$. In Section~\ref{sec:fourier_coeff_delayed_term}, we present the main mechanism for obtaining rigorous bounds on Fourier coefficients of state-dependent delayed terms, which adapts the validated discrete Fourier transform techniques of \cite{vandenberg2026spectral} to state-dependent compositions.
Section~\ref{sec:bounds_toy_model} collects the explicit estimates required to implement the method in the toy model \eqref{eq : toy model intro}. In particular, Section~\ref{sec:C0_operator_norms} explains how to compute the necessary finite-dimensional operator norms. We then extend the methodology to the autonomous SDDE \eqref{eq : autonomous SDDE_intro} in Section~\ref{sec : autonomous SDDE} and to the threshold-type SDDE \eqref{eq : appli original_intro} in Section~\ref{sec:threshold_SDDE}.

\section{A periodically forced non-autonomous problem} \label{sec:forced_toy_model}

In this section we present the roadmap of our approach and introduce the main components of the computer-assisted framework in a simplified setting. Specifically, we look for a $2\pi$-periodic solution $u : \R \to \R$ to the following problem
\begin{equation}\label{eq : toy model}
 u'(t) = - u(t) - \eps_1 u(t-1-\eps_2u(t)) + \phi(t), \qquad \text{where }   \phi(t) \bydef \cos(t) + \sin(t).
\end{equation}
The above equation \eqref{eq : toy model} is introduced as a toy model chosen to possess a particularly simple structure at $\varepsilon_1=0$, for which the $2\pi$-periodic solution $u(t)=\sin(t)$ is isolated. This solution provides a convenient starting point from which the parameters $\varepsilon_1$ and $\varepsilon_2$ can be continued away from zero via numerical continuation, after which the resulting numerical solutions can be rigorously validated using computer-assisted proofs. 

This section is intended to serve as a prototype for the general method developed in the paper: rather than working at the level of maximal generality from the outset, we deliberately restrict the presentation to the periodically forced toy model \eqref{eq : toy model}. This choice is made in order to streamline the exposition and to isolate the core analytical and computational ideas without additional technical overhead. The same strategy and structure will then be systematically extended to the more involved examples in the subsequent sections. We begin in Section~\ref{sec:zero_finding_problem_toy} by reformulating the existence problem as a nonlinear operator equation of the form $\mathbb{F}(x)=0$. In Section~\ref{sec:newton-kantorovich}, we introduce the Newton–Kantorovich framework that provides the analytical backbone of the proof and underlies the construction of validated solutions. Section~\ref{sec : approximate objects and projections} is devoted to the finite-dimensional approximation stage, where we define suitable projections and describe the computation of the numerical data, including the approximate periodic solution $\bx$ and an approximate inverse for the Fréchet derivative $D\mathbb{F}(\bx)$. In Section~\ref{sec:Fourier_series_representation}, we develop the Fourier-based formulation of the operators appearing in $\mathbb{F}$, which plays a central role in both the numerical and rigorous parts of the argument. Section~\ref{sec:fourier_coeff_delayed_term} then introduces the key analytic mechanism used to control nonlinear state-dependent delay terms: we derive rigorous bounds for Fourier coefficients by combining analyticity-based decay estimates with trigonometric interpolation, following \cite{vandenberg2026spectral}. The explicit estimates required to implement the method for the toy model are collected in Section~\ref{sec:bounds_toy_model}. Finally, Section~\ref{sec:C0_operator_norms} explains how the relevant finite-dimensional operator norms are computed in practice. 

\subsection{The zero-finding problem} \label{sec:zero_finding_problem_toy}
%
Our goal in this section is to derive a zero-finding problem, for which isolated zeros will yield periodic solutions to \eqref{eq : toy model}. In order to treat all subsequent examples, especially the threshold SDDE of Section~\ref{sec:threshold_SDDE}, the notations are introduced for vector-valued functions $u : \R \to \R^m$ where $m \geq 1$. In the case of \eqref{eq : toy model}, we will have $m=1.$ Letting $k \in \mathbb{N}\cup \{0\}$,  denote by 
\begin{equation}\label{def : C0}
    C_{per}^k([-\pi,\pi]) \bydef \left\{ u\in C^k(\R,\R^m), ~ u^{(j)}(t+2\pi)=u^{(j)}(t)\ \forall t \in \R \text{ and } \forall j \in \{0, \dots, k\}\right\} ,
\end{equation}
the set of $k$ times continuously differentiable and $2\pi$-periodic functions. $C_{per}^k([-\pi,\pi])$ is associated with the usual $C^k$-norm 
\begin{align}\label{def : Ck norm}
    \|u\|_{C^k} \bydef \max\left\{ \|u\|_{C^0}, \|u^{(1)}\|_{C^0}, \dots, \|u^{(k)}\|_{C^0} \right\}, \quad \text{ for all } u \in C^k_{per}([-\pi,\pi]),
\end{align}
where $\|u\Vert_{C^0} = \max_{t \in [-\pi,\pi]} |u(t)|_\infty$ and where $|\alpha|_\infty \bydef \max\{|\alpha_1|, \dots, |\alpha_m|\}$ for all $\alpha = (\alpha_1, \dots, \alpha_m) \in \R^m$.
Since our zero-finding problem also involves finitely many scalar unknowns, we introduce the Banach space
%
\begin{equation} \label{eq:Banach_Space_H}
H \bydef \R^m \times C_{per}^{1}([-\pi,\pi]),
\end{equation}
with norm
\[
\|x\|_H = \| (\beta,u) \|_H = \max \left\{ |\beta|_\infty, \|u\Vert_{C^0}, \|u'\Vert_{C^0} \right\}.
\]
Since the norm on the space $H$ involves the $C^0$ norm of the function and of its derivative, it will be useful in practice to separate these two components. To this end, let $H_0 \bydef \R^m \times C^0_{per}([-\pi,\pi])$ and 
let $\mathbb{D} : H \to H_0 $ be defined as 
\begin{align*}
    \mathbb{D}(\beta,u) = (\beta, u').
\end{align*}
We then define the following norm and seminorm on $H$, respectively:
\begin{align}\label{eq : identity norm H1 and D}
    \|(\beta, u)\|_{H_0} = \max\{|\beta|_\infty, ~ \|u\Vert_{C^0}\} \quad\text{and}\quad \|(\beta,u)\|_{H_1} = \max\{|\beta|_\infty, ~ \|u'\Vert_{C^0}\} = \|\mathbb{D}(\beta, u)\|_{H_0} .
\end{align} 
We have that
\begin{align}\label{eq : identity norm H with H0 H1}
    \|(\beta,u)\|_H  = \max\{\|(\beta,u)\|_{H_0}, ~  \|(\beta,u)\|_{H_1}\} = \max\{\|(\beta,u)\|_{H_0}, ~  \|\mathbb{D}(\beta,u)\|_{H_0}\}. 
\end{align}
On top of introducing notation, the purpose of~\eqref{eq : identity norm H with H0 H1} is to emphasize that our entire analysis will essentially be performed in the space $H_0$, which only involves continuous functions. Notably, a strategy for getting computable and relatively sharp upper bounds of operator norms of finite-dimensional operators on $C^0$ is derived in Section \ref{sec:C0_operator_norms}. 

We want to work with the integral formulation of SDDEs, but have to take into account that the antiderivative of an arbitrary periodic function may not be periodic. We  first introduce
the mean value operator $\mathbb{M} :  C_{per}^0([-\pi,\pi]) \to \R^m$ 
\begin{equation}\label{eq : mean_integral operator}
    \mathbb{M} u \bydef \frac{1}{2\pi}\int_{-\pi}^\pi u(s)  ds,
\end{equation}
and observe that 
\[
\int_{-\pi}^{t + 2\pi} (u(s) - \mathbb{M}u) ds = \int_{-\pi}^t (u(s) - \mathbb{M}u) ds
\]
for all $t \in \R$ and all $u \in C_{per}^0([-\pi,\pi])$. Consequently, we define the bounded linear operator $\mathbb{L}_1 : C_{per}^0([-\pi,\pi]) \to C_{per}^1([-\pi,\pi])$ given by
\begin{equation}\label{eq : L1 operator functions}
    (\mathbb{L}_1u)(t) \bydef \int_{-\pi}^t (u(s) - \mathbb{M}u) ds.
\end{equation}
The antiderivative operator $\mathbb{L}_1$ is really the inverse of $\partial_t$ when restricted to mean-zero functions:
\begin{equation} \label{eq : id dt versus L}
    \partial_t \mathbb{L}_1 = I - \mathbb{M} \quad \text{and} \quad  
    (I - \mathbb{M}) \mathbb{L}_1\partial_t = I - \mathbb{M}.
\end{equation}
As for the derivative, we extend our antiderivative operator $\mathbb{L}_1$ to the product space $H_0$ by introducing $\mathbb{D}^\dagger : H_0 \to H $ defined as 
\begin{align}\label{def : D dagger}
    \mathbb{D}^\dagger(\beta,u) = (\beta, \mathbb{L}_1u).
\end{align}
The operators $\mathbb{D}$ and $\mathbb{D}^\dagger$ serve as natural maps between the spaces $H_0$ and $H$. The above notations will be used throughout the paper when discussing our different examples.  

Now we focus on the case of \eqref{eq : toy model}, and we fix the dimension $m$ to $1$. Introducing further notations in the context of \eqref{eq : toy model}, let $\psi : C^1_{per}([-\pi,\pi]) \to C^1_{per}([-\pi,\pi])$ be the composition term in \eqref{eq : toy model} given as 
\begin{align}\label{eq : psi toy model}
    \psi(u)(t) = u(t-1-\varepsilon_2u(t)).
\end{align}
Then, let $\mathbb{G} : \R^2 \to \R$ be defined as 
\begin{align*}
    \mathbb{G}(x_1,x_2) = - \varepsilon_1  x_1 -  x_2.
\end{align*}
Using the above notations, we have that if $u \in C^1_{per}([-\pi,\pi])$ solves \eqref{eq : toy model}, then it equivalently solves
\begin{align}\label{eq : toy equation with g}
    u' = g(u), \text{ where } g(u) \bydef \mathbb{G}(\psi(u),u) + \phi.
\end{align}
The notations $\psi$, $g$ and $\mathbb{G}$ will be used throughout the paper to describe the vector field of the SDDE at stake. 

From the above definitions, we define the map $f:H\to C^1_{per}$ as follows
\begin{align*}
f(\beta,u)(t) &\bydef u(t) - \beta - \int_{-\pi}^t \left( g(u)(s) - \left[ \frac{1}{2\pi} \int_{-\pi}^{\pi}  g(u)(\sigma) d\sigma \right] \right) ds 
\\
& = u(t) - \beta - (\mathbb{L}_1g(u))(t). 
\end{align*}
Denote $x=(\beta,u) \in H$, and define the map
\begin{equation} \label{eq:the_map}
\mathbb{F}(x) = \mathbb{F}(\beta,u) \bydef \begin{pmatrix} \mathbb{M}g(u) \\ f(\beta,u) \end{pmatrix}
= \begin{pmatrix} 
\displaystyle 
\mathbb{M}g(u)
\vspace{.1cm} \\ 
\displaystyle 
u - \beta - \mathbb{L}_1g(u)
\end{pmatrix}.
\end{equation}

We want to prove that $\mathbb{F}$ is well-defined from $H$ to itself and that a solution $x$ to $\mathbb{F}(x)=0$ provides a periodic solution to \eqref{eq : toy equation with g}.

\begin{lemma}\label{lem : zeros of F are solutions}
$\mathbb{F} : H \to H$ is well-defined and Fréchet differentiable.
Moreover, given $x = (\beta,u)$ we have that 
\begin{align}\label{eq : formula DF}
    D\mathbb{F}(x) = \begin{pmatrix} 
0 &  \mathbb{M} \partial_u{g}(u)
\vspace{.1cm} \\ 
\displaystyle 
-1 & I - \mathbb{L}_1 \partial_u g(u)
\end{pmatrix}.
\end{align}
Finally, if $x=(\beta,u) \in H$ solves $\mathbb{F}(x) = 0$, then $u$ is an infinitely differentiable $2\pi$-periodic solution of \eqref{eq : toy equation with g} satisfying $u(-\pi)=\beta$. 
\end{lemma}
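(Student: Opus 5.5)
The plan is to reduce everything to the regularity of the composition operator $\psi$, viewed as a map $C^1_{per}([-\pi,\pi]) \to C^0_{per}([-\pi,\pi])$ rather than into $C^1_{per}$. Since $\mathbb{M} : C^0_{per} \to \R$ and $\mathbb{L}_1 : C^0_{per} \to C^1_{per}$ are bounded linear operators, and $\mathbb{G}$ is linear, it suffices to show that $g : C^1_{per} \to C^0_{per}$ is well defined and Fréchet differentiable with derivative $\partial_u g(u)$. The identity part $(\beta,u)\mapsto u-\beta$ is trivially smooth. Formula \eqref{eq : formula DF} then follows from the chain rule.

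\textbf{Step 1: well-definedness.} For $u \in C^1_{per}$, set $\tau_u(t) \bydef t-1-\eps_2 u(t)$. This is continuous and satisfies $\tau_u(t+2\pi)=\tau_u(t)+2\pi$. Hence $\psi(u)=u\circ\tau_u$ is continuous and $2\pi$-periodic, so $g(u)\in C^0_{per}$.

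\textbf{Step 2: differentiability.} The candidate derivative is
\[
D\psi(u)h(t) = h(\tau_u(t)) - \eps_2 u'(\tau_u(t))\,h(t),
\]
which is bounded from $C^1_{per}$ (even from $C^0_{per}$) to $C^0_{per}$ with norm at most $1+|\eps_2|\|u'\|_{C^0}$. Since $\tau_{u+h}-\tau_u=-\eps_2 h$, the remainder splits as
\[
\big[u(\tau_{u+h}) - u(\tau_u) - u'(\tau_u)(\tau_{u+h}-\tau_u)\big] + \big[h(\tau_{u+h})-h(\tau_u)\big].
\]
For the first bracket, the mean value theorem and the uniform continuity of $u'$ on $\R$ (it is periodic and continuous) give a bound $|\eps_2|\|h\|_{C^0}\,\omega_{u'}(|\eps_2|\|h\|_{C^0})$, where $\omega_{u'}$ is the modulus of continuity of $u'$. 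This is $o(\|h\|_{C^1})$. For the second bracket, the mean value theorem applied to $h$ gives a bound $|\eps_2|\|h'\|_{C^0}\|h\|_{C^0}\le |\eps_2|\|h\|_{C^1}^2$.

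This step is where the main obstacle lies. The loss of one derivative is essential: the second bracket requires control of $h'$, so $\psi$ is differentiable only as a map $C^1\to C^0$. This is precisely why the formulation integrates once through $\mathbb{L}_1$ and the space $H$ carries the $C^1$ norm. Consequently $\partial_u g(u)h = -\eps_1 D\psi(u)h - h$, and $\mathbb{F}:H\to H$ is Fréchet differentiable with the stated block form; the $-1$ entry comes from the $-\beta$ term.

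\textbf{Step 3: zeros are solutions.} If $\mathbb{F}(\beta,u)=0$, then $\mathbb{M}g(u)=0$ and $u=\beta+\mathbb{L}_1 g(u)$. By \eqref{eq : id dt versus L}, this gives $u'=g(u)-\mathbb{M}g(u)=g(u)$, and $u(-\pi)=\beta$ because $(\mathbb{L}_1 v)(-\pi)=0$.

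Smoothness follows by bootstrapping. If $u\in C^k$ with $k\ge1$, then $\tau_u\in C^k$, so $\psi(u)=u\circ\tau_u\in C^k$ and hence $g(u)\in C^k$ (recall $\phi$ is smooth). The equation $u'=g(u)$ then gives $u\in C^{k+1}$. Induction shows $u\in C^\infty$, and it is $2\pi$-periodic since it lies in $C^1_{per}$.
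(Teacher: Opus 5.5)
Your proof is correct and follows the same route as the paper. Both arguments reduce everything to the Fréchet differentiability of $\psi$ from $C^1_{per}$ to $C^0_{per}$, using that $\mathbb{G}$ is linear and that $\mathbb{M}$ and $\mathbb{L}_1$ are bounded; both then read off $u'=g(u)$ and $u(-\pi)=\beta$ from $\mathbb{F}(x)=0$ and bootstrap to get smoothness. You additionally supply the explicit remainder estimate (via the modulus of continuity of $u'$ and the mean value bound on $h$) and the induction step, which the paper only asserts.
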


\begin{proof}
Let $u \in C_{per}^1([-\pi,\pi])$. 
Since $\phi$ is $2\pi$-periodic and smooth, we have that $g(u)$ defined in \eqref{eq : toy equation with g} is $2\pi$-periodic. Hence $\mathbb{L}_1 g(u)$ is $2\pi$-periodic and continuously differentiable by definition \eqref{eq : L1 operator functions}.

A straightforward computation formally gives~\eqref{eq : formula DF}.
 To justify it, note that since $\mathbb{G}$ is linear and $\mathbb{M} : C^0_{per} \to \R$ and $\mathbb{L}_1 : C^0_{per} \to C^1_{per}$ are bounded linear operators, 
 $\mathbb{F}$ will be Fréchet differentiable if $u \mapsto \psi(u)$, given in \eqref{eq : psi toy model}, is Fréchet differentiable from $C^1_{per}$ to $C^0_{per}$. Now, given $u \in C^1_{per}([-\pi,\pi])$, 
\begin{equation}\label{eq : derivative of psi}
    (D\psi(u) h)(t) = - \varepsilon_2   u'(t-1-\varepsilon_2u(t)) h(t) + h(t-1-\varepsilon_2u(t)) \in C^0_{per}
\end{equation}
for all $h \in C^1_{per}([-\pi,\pi])$. Consequently, $\mathbb{F} : H \to H$ is Fréchet differentiable.


Now, assume that  $\mathbb{F}(\beta,u) = 0$. Readily, we get $\mathbb{M}g(u) = \frac{1}{2\pi}\int_{-\pi}^{\pi} g(u)(s) ds = 0$ and 
\[
u(t) = \beta + \int_{-\pi}^t \left( g(u)(s) - \mathbb{M}g(u) \right) ds
= \beta + \int_{-\pi}^t  g(u)(s) ds.
\]
Hence, $u(-\pi)=\beta$ and $u'(t) = g(u)(t)$ for all $t \in [-\pi,\pi]$. The smoothness of $u$ can be obtained using a classical bootstrapping argument.
\end{proof}
Consequently, we have defined a zero-finding problem $\mathbb{F} : H \to H$, which is Fréchet differentiable on $H$, and whose zeros provide smooth solutions to \eqref{eq : toy equation with g}.

Before providing our explicit results, we introduce the operator $\mathbb{Q}$, which will be helpful in decomposing the derivative $D\mathbb{F}(\bx)$ at a given $\bx = (\bar{\beta},\bu) \in H$.
Indeed,  by construction (cf. \eqref{eq : formula DF}), we have 
\begin{align}\label{eq : decomposition DF with DQ}
    D\mathbb{F}(\bx) = \mathbb{I} + \mathbb{D}^\dagger D\mathbb{Q}(\bx)
\end{align}
where 
\begin{align*}
\mathbb{Q}(x) = \begin{pmatrix}
     \mathbb{M} g(u)\\
    - g(u)
\end{pmatrix}, ~~    D\mathbb{Q}(\bx) = \begin{pmatrix} 
0 & \mathbb{M}\partial_u {g}(\bu)
\vspace{.1cm} \\ 
\displaystyle 
0 &  -   \partial_u g(\bu) 
\end{pmatrix} \text{ and } \mathbb{I} = \begin{pmatrix} 
0 & 0 \\ 
\displaystyle 
-1 &  I
\end{pmatrix}.
\end{align*}
In particular, we are able to factor out the regularization operator $\mathbb{D}^{\dagger}$, and write $D\mathbb{F}(\bar{x})$ as a perturbation of the operator $\mathbb{I}$. In fact, $\mathbb{D}^\dagger D\mathbb{Q}(\bx)$ is compact on $H$, since $\partial_u g(\bu)$ factors through the compact embedding $C^1_{per} \hookrightarrow C^0_{per}$, and Lemma~\ref{lem : Z1 forcing} quantifies this compactness through the truncation constant $\theta_N$ defined in \eqref{def : Kolmogorov constant expression}.

\subsection{A Newton--Kantorovich approach} \label{sec:newton-kantorovich}

%
We now assume that we have access to an approximate solution $\bar{x} \in H$, whose construction is detailed below, and we aim at proving the existence of a true solution in a vicinity of $\bar{x}$. Such an objective can be achieved using a Newton--Kantorovich approach. After having constructed a suitable approximate inverse $\mathbb{A} : H \to H$  for the Jacobian $D\mathbb{F}(\bar{x}) : H \to H$, we try to prove that the operator $x\mapsto x-\mathbb{A}\mathbb{F}(x)$ is a contraction on a neighborhood of $\bx$. The theorem (see, e.g., \cite{Yamamoto,vandenberg2026spectral}) below provides sufficient conditions supporting this approach. Here and below, for normed spaces $X$ and $Y$, $B(X,Y)$ denotes the space of bounded linear operators from $X$ to $Y$, endowed with the operator norm, and $B(X) \bydef B(X,X)$.
\begin{theorem}[\bf Newton--Kantorovich Theorem]
\label{thm:radPolyBanach}
Let $H$ be a Banach space, $\mathbb{F} : H \to H$ be Fréchet differentiable, $\bx \in H$, $\mathbb{A} \in B(H)$ an injective linear operator, and  $r_*>0$. Assume that there exist constants $Y, Z_1,Z_2 \ge 0$ such that
\begin{align}
\label{eq:Y_radPolyBanach}
	\| \mathbb{A} \mathbb{F}(\bx) \|_H & \leq Y,	\\
\label{eq:Z1_radPolyBanach}
\| I - \mathbb{A} D\mathbb{F}(\bx) \|_{B(H)} & \leq Z_1, \\
\label{eq:Z2_radPolyBanach}
\| \mathbb{A} [ D\mathbb{F}(x) - D\mathbb{F}(\bx) ] \|_{B(H)} &\leq Z_2 \|x- \bx\|_H, 
\quad \forall x \in H \text{ such that $\|x-\bx\|_H \le r_*$}. 
\end{align}
If there exists $r_0 \in (0,r_*]$ such that 
\begin{align*}
Y+Z_1 r_0 + \frac{1}{2}Z_2 r_0^2 \le r_0 ~ \text{ and } ~ 
Z_1+Z_2 r_0<1,
\end{align*}
then $\mathbb{F}$ has a unique zero $\tilde{x}$ such that $\|\tilde{x}-\bx\|_H \le r_0$.
\end{theorem}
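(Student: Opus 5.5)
We plan to prove the theorem with the standard contraction argument applied to the Newton-like operator $T(x) \bydef x - \mathbb{A}\mathbb{F}(x)$ on the closed ball $\overline{B}_{r_0}(\bx) \bydef \{x \in H : \|x-\bx\|_H \le r_0\}$. Since $H$ is Banach, the ball is complete. Once $T$ is known to be a contraction mapping the ball into itself, Banach's fixed point theorem gives a unique fixed point $\tilde x$ in the ball. Injectivity of $\mathbb{A}$ then converts $\mathbb{A}\mathbb{F}(\tilde x)=0$ into $\mathbb{F}(\tilde x)=0$. Conversely, any zero of $\mathbb{F}$ in the ball is a fixed point of $T$, which gives uniqueness of the zero.

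The key estimate is on $DT(x) = I - \mathbb{A}D\mathbb{F}(x)$ for $\|x-\bx\|_H \le r_0 \le r_*$. We split it as
\[
DT(x) = \bigl(I - \mathbb{A}D\mathbb{F}(\bx)\bigr) - \mathbb{A}\bigl[D\mathbb{F}(x)-D\mathbb{F}(\bx)\bigr],
\]
and use \eqref{eq:Z1_radPolyBanach} and \eqref{eq:Z2_radPolyBanach} to obtain $\|DT(x)\|_{B(H)} \le Z_1 + Z_2\|x-\bx\|_H$. Taking the supremum over the ball gives $\|DT(x)\|_{B(H)} \le Z_1 + Z_2 r_0 < 1$. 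The ball is convex and $T$ is Fréchet differentiable, so the mean value inequality shows that $T$ is Lipschitz on it with constant $Z_1+Z_2r_0<1$.

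For the self-mapping property, take $x$ in the ball, set $h = x-\bx$, and write
\[
T(x)-\bx = -\mathbb{A}\mathbb{F}(\bx) + \int_0^1 DT(\bx+sh)\,h\,ds .
\]
This follows from the fundamental theorem of calculus along the segment, or equivalently from a mean-value inequality argument if one prefers to avoid Bochner integrals. Using $\|DT(\bx+sh)\| \le Z_1 + Z_2 s\|h\|_H$ and integrating in $s$ yields
\[
\|T(x)-\bx\|_H \le Y + Z_1 r_0 + \tfrac12 Z_2 r_0^2 \le r_0 .
\]
The factor $\tfrac12$ comes precisely from integrating the linear-in-$s$ bound.

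The only point needing care is this integral step. With $\mathbb{F}$ only Fréchet differentiable, and not necessarily $C^1$, the map $s\mapsto DT(\bx+sh)h$ might not be Riemann integrable. To avoid this, we would apply a functional $\ell\in H^*$ with $\|\ell\|=1$ to obtain a real-valued differentiable function $\varphi(s)=\ell(T(\bx+sh)-\bx)$. We then use the scalar mean value inequality on $s\mapsto \varphi(s) - s\,\ell(-\mathbb{A}\mathbb{F}(\bx))\cdot 0 - \int_0^s (Z_1+Z_2\sigma\|h\|)\|h\|d\sigma$, that is, we compare derivatives. This gives the bound for every such $\ell$, and Hahn–Banach then recovers the norm. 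With this, the hypotheses of the contraction mapping theorem are verified and the conclusion follows.
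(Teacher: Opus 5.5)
Your proof is correct. It is the standard contraction argument for $T(x)=x-\mathbb{A}\mathbb{F}(x)$ on the closed ball of radius $r_0$, which is exactly the strategy the paper describes; the paper does not prove the theorem itself but cites it from the literature.

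Your comparison argument via a norming functional correctly gives the $\tfrac12 Z_2 r_0^2$ term without needing $D\mathbb{F}$ to be continuous. The stray term $s\,\ell(-\mathbb{A}\mathbb{F}(\bx))\cdot 0$ is zero and can simply be dropped. Injectivity of $\mathbb{A}$ then yields existence and uniqueness of the zero, as you describe.
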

Note that a successful application of the above result highly depends on the quality of the approximate objects $\bar{x} \in H$ and $\mathbb{A} : H \to H$. Rigorous numerics provide natural means for such objectives, and we detail the construction of $\bar{x}$ and $\mathbb{A}$ in the following section.
\subsection{Approximate objects and finite-dimensional projections}\label{sec : approximate objects and projections}
In order to numerically construct our approximate objects, we consider a finite-dimensional projection of the space $H$, which is obtained by approximating Fourier series with trigonometric polynomials of a certain fixed order $N \in \N$.
The finite-dimensional projection $\pi^{\le N}:C_{per}^{0}([-\pi,\pi])\to C_{per}^{0}([-\pi,\pi])$ is defined as follows:

\[
\pi^{\le N} u(t) = \sum_{|n| \le N} U_n e^{i n t},
\]
where $(U_n)_{n \in \mathbb{Z}}$ are the Fourier coefficients of $u$, defined as in Section~\ref{sec:Fourier_series_representation}.
We also define the complementary projection $\pi^{>N} : C_{per}^{0}([-\pi,\pi]) \to C_{per}^{0}([-\pi,\pi])$ as $\pi^{> N} \bydef I - \pi^{\le N}$ (here $I$ is the identity operator on $C_{per}^{0}([-\pi,\pi])$).
Now, given an element $x=(\beta,u)\in H$, we slightly abuse notation and consider the projection $\pi^{\le N}: H \to H$ defined by
\[
\pi^{\le N} x = \pi^{\le N}(\beta,u) = \left( \beta, \pi^{\le N} u \right) \in H.
\]
Similarly, the complementary projection $\pi^{> N}: H \to H$ is defined as $\pi^{> N} \bydef I - \pi^{\le N}$ (here $I$ is the identity operator on $H=\R^m \times C_{per}^{1}([-\pi,\pi])$), that is 
\[
\pi^{> N} x = \pi^{> N} (\beta,u) = \left( 0, \pi^{> N} u \right) \in H.
\]
Finally, we denote \[H^{\le N} \bydef \pi^{\le N} H.\]
A finite-dimensional restriction of $\mathbb{F}$ defined in \eqref{eq:the_map} is then naturally obtained by applying $\pi^{\le N}$ to both the domain and the codomain, resulting in a new operator $\mathbb{F}^{\le N}$ on $H^{\le N}$. This finite-dimensional operator $\mathbb{F}^{\le N}$ is what we work with computationally. In practice, this will also serve for the computation of an approximate solution, which we denote by
\begin{align}\label{def : approx solution forcing}
    \bar{x} = (\bar{\beta},\bar{u}) \in H^{\leq N}.
\end{align}
For instance, the approximate solution $\bar{u}$ obtained for $\varepsilon_1 = 0.5$ and $\varepsilon_2 = 2$, given as a trigonometric polynomial of order $150$, is represented in Figure~\ref{fig : toy model}, together with its Fourier coefficients.
\begin{figure}[ht]
\centering
\includegraphics[width=1\textwidth]{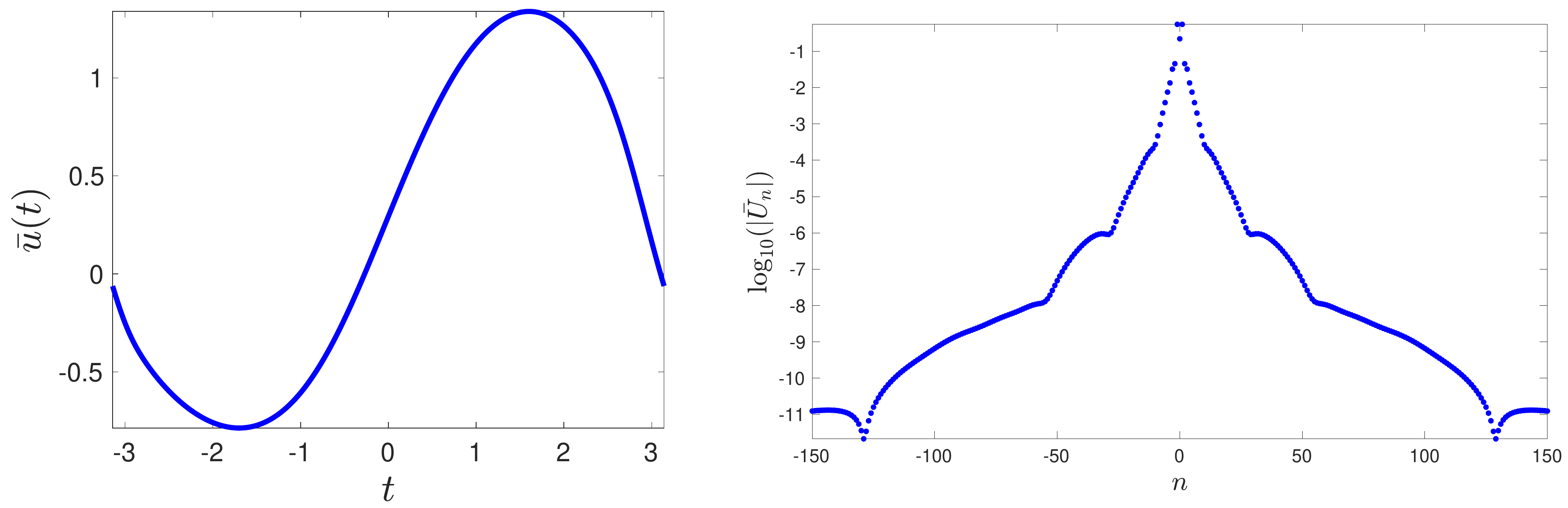}
\vspace{-.5cm}
\caption{Left: approximate solution $\bar{u}$ of \eqref{eq : toy model} for $\varepsilon_1 = 0.5$ and $\varepsilon_2 = 2$, given as a trigonometric polynomial of order $150$. Right: $\log_{10}$ of the absolute value of its Fourier coefficients $\bar{U}_n$.}
\label{fig : toy model}
\end{figure}
Now, we detail the construction of $\mathbb{A} : H \to H$, approximating the inverse of $D\mathbb{F}(\bar{x}).$ First, recall that
\[
D\mathbb{F}(\bx) = \begin{pmatrix} 
0 &  \mathbb{M} \partial_u{g}(\bu)
\vspace{.1cm} \\ 
\displaystyle 
-1 & I - \mathbb{L}_1 \partial_u g(\bu)
\end{pmatrix}
\]
at the approximation $\bx$. In fact, one can prove that $\mathbb{L}_1 \partial_u g(\bu) : C^1_{per}([-\pi,\pi]) \to C^1_{per}([-\pi,\pi]) $ is a compact operator (we will actually derive explicit estimates to quantify this notion of compactness, cf. Lemma \ref{lem : Z1 forcing}). This implies that $D\mathbb{F}(\bx)$ is a compact perturbation of the identity, and, crucially, that we should be able to construct an accurate approximation of its inverse as a finite-rank perturbation of the identity.

The operator $\mathbb{A}$ is therefore split into two components: a finite-dimensional part and an infinite-dimensional tail, using the projections $\pi^{\le N}$ and $\pi^{> N}$:

\begin{equation} \label{eq:general_A}
\mathbb{A} \bydef \mathbb{A}^{\le N}  +  \pi^{>N},
\end{equation}
where $\mathbb{A}^{\le N}$ is a linear map on the finite-dimensional space $H^{\le N}$. This decomposition allows us to treat the finite and infinite parts separately in our analysis. More precisely, $\mathbb{A}^{\le N}$ is constructed numerically as an approximate inverse of $D\mathbb{F}^{\le N}(\bar{x})$, and it is represented as a matrix. The $Z_1$ estimates derived later on in the paper show, in a quantitative manner, that such an operator $\mathbb{A}$ is indeed a suitable approximate inverse of $D\mathbb{F}(\bar{x})$, provided $N$ is taken large enough. Since $\bx$ is real-valued, $D\mathbb{F}(\bx)$ maps real-valued functions to real-valued functions; as the numerically computed matrix of $\mathbb{A}^{\le N}$ need not exactly preserve real-valued functions, we actually use $\mathrm{Re}\circ\mathbb{A} = \mathrm{Re}\circ\mathbb{A}^{\le N} + \pi^{>N}$, which is of the same form, and all the bounds derived below for $\mathbb{A}$ also hold for $\mathrm{Re}\circ\mathbb{A}$ since $\|\mathrm{Re}\, u\|_{C^0} \le \|u\|_{C^0}$.

In practice, it is usually simpler to numerically represent periodic functions by their Fourier coefficients. In fact, both $\bar{x}$ and $\mathbb{A}^{\le N}$ will be constructed using their Fourier coefficients representation. In order to do so, we provide in the next section the Fourier coefficients representation of our operators of interest.
\subsection{Fourier coefficients representation} \label{sec:Fourier_series_representation}

The objects introduced in Section \ref{sec:zero_finding_problem_toy} are defined on the infinite-dimensional space $H$. In practice, especially for computing the bounds of Theorem \ref{thm:radPolyBanach}, it will be convenient to consider a Fourier coefficients representation. Namely, we will separate a finite-dimensional polynomial part, which will be estimated thanks to operations at the level of Fourier coefficients, and a tail part which will be controlled by explicit estimates (cf. Lemma \ref{lem:piNC0C1} for instance). In what follows, we introduce notations for the Fourier coefficients representations of the objects of Section \ref{sec:zero_finding_problem_toy}, but restricted to analytic functions. This is sufficient for evaluating our bounds at the numerical approximations, which are trigonometric polynomials and hence analytic. This restriction also avoids technical issues concerning the (possible lack of) pointwise convergence of Fourier series for mere $C^0$ functions.

For $p\in[1,\infty)$, let $\ell^p$ be defined as 
\begin{equation}
    \ell^p \bydef \left\{ U = (U_n)_{n \in \mathbb{Z}}, ~ U_n \in \C^m ~ : ~ \|U\|_{\ell^p} < \infty\right\}, \text{ where } \|U\|_{\ell^p} \bydef \max_{j \in \{1, \dots, m\}} \left( \sum_{n \in \mathbb{Z}} |(U_{n})_j|^p \right)^{\frac{1}{p}}.
\end{equation} 
Moreover, define $L^2$ as 
\begin{align}
    L^2 \bydef (L^2([-\pi,\pi]))^m. 
\end{align}
Let $\mathcal{F} : L^2 \to \ell^2$ be the usual Fourier transform with inverse $\mathcal{F}^{-1} : \ell^2 \to L^2$, given as 
\begin{align}\label{def : fourier transform}
    \left(\mathcal{F}(u)\right)_n \bydef \frac{1}{2\pi} \int_{-\pi}^{\pi} u(t) e^{-int}dt \quad \text{and} \quad  \mathcal{F}^{-1}(U)(t) = \sum_{n \in \mathbb{Z}} U_n e^{int}
\end{align}
for all $u \in L^2$ and $U = (U_n)_{n \in \mathbb{Z}} \in \ell^2.$

In what follows, $u$ denotes a $2\pi$-periodic function which is analytic, that is, which extends analytically to a neighborhood of a strip $S_{\bar\rho} = \{z \in \C : |\mathrm{Im}(z)| \le \bar\rho\}$ for some $\bar\rho>0$. By the Paley--Wiener theorem (see also Section~\ref{sec:fourier_coeff_delayed_term}), its Fourier coefficients $U = \mathcal{F}(u)$ then decay geometrically, that is, $|U_n| \le C e^{-\bar\rho|n|}$ for all $n \in \Z$ and some $C>0$. In particular, $U \in \ell^1$, and the Fourier series of $u$ and of all its derivatives converge uniformly, so that $u = \mathcal{F}^{-1}(U)$ pointwise. Given an operator $\mathbb{B}$ introduced in Section~\ref{sec:zero_finding_problem_toy}, we then define its Fourier coefficients representation $B$ by
\[
BU \bydef \mathcal{F}\left(\mathbb{B}\,\mathcal{F}^{-1}(U)\right) = \mathcal{F}(\mathbb{B}u),
\]
that is, $B = \mathcal{F}\mathbb{B}\mathcal{F}^{-1}$ on such sequences.

First, note that $\mathbb{M} : C^0_{per}([-\pi,\pi]) \to \R^m$ has a simple expression acting on Fourier coefficients given as $M$, where
\begin{align*}
    (MU)_n = \begin{cases}
        U_0 &\text{ if } n=0\\
        0 &\text{ otherwise}.
    \end{cases}
\end{align*}
Moreover,  the operator $\mathbb{L}_1$ has a Fourier coefficients representation $L_1 \bydef  \mathcal{F} \mathbb{L}_1 \mathcal{F}^{-1}$ given as 
\begin{align*}
    (L_1(U))_n = \begin{cases}
    \displaystyle\sum_{k \in \mathbb{Z}\setminus \{0\}} \frac{(-1)^{k+1}U_k}{ik } ~~ &\text{ if } n = 0 \\
       \displaystyle \frac{U_n }{in }  ~~ &\text{ if } n \in \mathbb{Z}\setminus \{0\}.
    \end{cases}
\end{align*}
In practice, these formulas are applied to finitely many Fourier coefficients, so that the above operators can be understood as (finite) matrices, the contribution of the remaining coefficients being controlled analytically (see Section~\ref{sec:bounds_toy_model}).
Now, more abstractly, we define $G$ on Fourier coefficients as 
\begin{equation} \label{eq:G(U)}
    G(U) \bydef \mathcal{F}g\left(\mathcal{F}^{-1}U\right)
\end{equation}
which is the Fourier coefficients representation of $g$ given in \eqref{eq : toy equation with g}. In Section~\ref{sec:bounds_toy_model}, we shall provide a more explicit representation of $G$ in a concrete application.

Moreover, we define the Fourier coefficients representation $F$ of $\mathbb{F}$ as
\begin{align}
    F(X) \bydef \begin{pmatrix}
        M G(U)\\
        U - \mathcal{B} - L_1 G(U)
    \end{pmatrix},
\end{align}
where $X = (\beta,U) \in \R^m \times \ell^1$ and $\mathcal{B} \in \ell^1$ is defined as $\mathcal{B}_n = \beta$ if $n=0$ and  $\mathcal{B}_n = 0$ otherwise.

Furthermore, we define the equivalent of the projection operators $\pi^{\le N}$ and $\pi^{>N}$ in Fourier coefficients as follows
\begin{align}
    \Pi^{\le N} \bydef \mathcal{F}\pi^{\le N}\mathcal{F}^{-1}, ~~  \Pi^{> N} \bydef \mathcal{F}\pi^{> N}\mathcal{F}^{-1},
\end{align}
that is, $\Pi^{\le N}$ and $\Pi^{>N}$ keep the Fourier coefficients with $|n| \le N$ and $|n| > N$, respectively, and set the others to zero.
Finally, we define
\begin{align}
    A^{\leq N} \bydef \mathcal{F}\mathbb{A}^{\leq N}\mathcal{F}^{-1} \text{ and }  A \bydef \mathcal{F}\mathbb{A}\mathcal{F}^{-1}.
\end{align}
In fact, we have that 
\[
    A = A^{\leq N} + \Pi^{> N}, ~ \text{ with } A^{\leq N} = \Pi^{\le N} A^{\leq N}\Pi^{\le N}. 
\]

\subsection{Bounds for the Fourier coefficients of the state-dependent term} \label{sec:fourier_coeff_delayed_term}

In this section, we introduce the technique to obtain a rigorous enclosure of the Fourier coefficients of periodic functions of the form
\begin{equation} \label{eq:composition_term}
v(t) \bydef \bu(t - \btau(t)) = \sum_{n \in \Z} V_n e^{i n t},
\end{equation}
where $\bu(t)$ and $\btau(t)$ are real-valued trigonometric polynomials of order $N_u$ and $N_\tau$ given by 
\begin{equation} \label{eq:bu_btau}
\bu(t) = \sum_{|n| \le N_u} \bu_n e^{i n t}, \quad \bu_{-n} = \bu_{n}^* \qquad 
\text{and} \qquad
\btau(t) = \sum_{|n| \le N_\tau} \btau_n e^{i n t}, \quad \btau_{-n} = \btau_{n}^*,
\end{equation}
where $z^*$ denotes the complex conjugate of $z$. 
Note that nonlinear terms of the form \eqref{eq:composition_term} are present in all three examples considered in this paper. For instance, for the toy model \eqref{eq : toy model}, the nonlinear term is $u(t-1-\eps_2u(t))$ and hence for this case, $v(t) = \bu(t-1-\eps_2\bu(t)) = \bu(t - \btau(t))$, where $\btau(t) = 1 + \eps_2 \bu(t)$. 

We follow the approach of \cite{vandenberg2026spectral}, which we briefly recall and adapt to \eqref{eq:composition_term}. It consists of a geometric bound on all the coefficients $V_n$, used for $|n| > N_{\tt tail}$ (Section~\ref{sec:tail_coefficients}), and of an enclosure of the $V_n$ with $|n| \le N_{\tt tail}$ via trigonometric interpolation, whose aliasing error is controlled thanks to this geometric bound (Section~\ref{sec:finite_coefficients}). These calculations rely on interval arithmetic~\cite{MooKeaClo09}, which is used here both as a means of controlling rounding errors and in order to rigorously enclose the range of continuous maps over compact sets.

\subsubsection{Geometric bound on the Fourier coefficients} \label{sec:tail_coefficients}

Since $\bu$ and $\btau$ are trigonometric polynomials, $v$ extends to an entire $2\pi$-periodic function, which is real-valued on $\R$. Given $\bar\rho>0$, let $S_{\bar{\rho}} \bydef \{ z \in \C : |\mathrm{Im}(z)| \le \bar\rho\}$ and $\bar\nu \bydef e^{\bar\rho}$. By a classical Paley--Wiener estimate (shifting the line of integration from $\R$ to $\R \mp i\bar\rho$, see \cite{paley_wiener,vandenberg2026spectral}),
\begin{equation}\label{eq : tail estimates}
    |V_n| \le \frac{C}{\bar{\nu}^{|n|}}, \quad \text{for all } n \in \Z, \qquad \text{for any } C \ge \frac{1}{2\pi}\int_{0}^{2\pi} |v(t-i\bar\rho)|\,dt,
\end{equation}
where we used that $v(t+i\bar\rho) = v(t-i\bar\rho)^*$, since $v$ is real-valued on $\R$. For a complex-valued $v$ (such as the functions $e^{ik(t-1-\varepsilon_2\bu(t))}$ defining the columns of $L_3$ in Section~\ref{sec:bounds_toy_model}), \eqref{eq : tail estimates} holds for any $C$ bounding the means of $|v|$ on both lines $\mathrm{Im}(z) = \pm\bar\rho$. 

To obtain a computable $C$, we consider an even integer $N_{\tt FFT}$, the grid points $t_k \bydef \frac{2k\pi}{N_{\tt FFT}}$, $k=0,\dots,N_{\tt FFT}-1$, and $\boldsymbol{\delta} \bydef \left[-\frac{\pi}{N_{\tt FFT}},\frac{\pi}{N_{\tt FFT}}\right]$. Using the interval enclosure technique of \cite{vandenberg2026spectral} for the trigonometric polynomial $t \mapsto \btau(t-i\bar\rho)$, the interval enclosures
\begin{equation} \label{eq:definition_Uk}
\boldsymbol{\tau}_k \bydef \sum_{|n|\le N_\tau} \btau_n e^{n(\bar\rho + i\boldsymbol{\delta})} e^{int_k} \ni \btau(t-i\bar\rho), \qquad t \in t_k + \boldsymbol{\delta},
\end{equation}
are obtained for all $k$ with a single FFT. Then, for all $t \in t_k + \boldsymbol{\delta}$,
\[
v(t-i\bar\rho) = \sum_{|n| \le N_u} \bu_n e^{i n (t-i \bar{\rho} - \btau(t-i \bar{\rho}))} \in \boldsymbol{V}_k \bydef \sum_{|n| \le N_u} \bu_n e^{n (\bar{\rho} + i \boldsymbol{\delta} -  i \boldsymbol{\tau}_k)} e^{i n t_k},
\]
where, since $\boldsymbol{\tau}_k$ enters nonlinearly, the $\boldsymbol{V}_k$ are computed directly (as an interval matrix-vector product) rather than with an FFT. Consequently, \eqref{eq : tail estimates} holds with the computable constant
\begin{equation} \label{eq:asymptotic_constant}
C \bydef \frac{1}{N_{\tt FFT}} \sum_{k=0}^{N_{\tt FFT}-1} \sup |\boldsymbol{V}_k|.
\end{equation}

\subsubsection{Enclosure of the low-order coefficients} \label{sec:finite_coefficients}

Let $N_{\tt FFT} \ge 2N_{\tt tail}+2$ be even, and let $\bv$ be the trigonometric polynomial interpolating $v$ at the points $t_k$, whose coefficients
\[
\bar V_n \bydef \frac{1}{N_{\tt FFT}} \sum_{k=0}^{N_{\tt FFT}-1} v(t_k) e^{- i n t_k}, \qquad n = -\frac{N_{\tt FFT}}{2},\dots,\frac{N_{\tt FFT}}{2}-1,
\]
are enclosed with interval arithmetic and the FFT, from enclosures of
\begin{equation} \label{eq:v_at_tk}
v(t_k) = \sum_{|n| \le N_u} \bu_n e^{in(t_k - \btau(t_k))}, \qquad k = 0,\dots,N_{\tt FFT}-1,
\end{equation}
where the $\btau(t_k)$ are obtained as in \eqref{eq:definition_Uk} with $\bar\rho = 0$ and $\boldsymbol{\delta}=0$. Combining the discrete Poisson summation formula with \eqref{eq : tail estimates}, we get the aliasing error bound (see \cite{vandenberg2026spectral})
\begin{equation} \label{eq:aliasing_control}
|V_n - \bar{V}_n| \le C \, \frac{\bar{\nu}^n + \bar{\nu}^{-n}}{\bar{\nu}^{N_{\tt FFT}}-1}, \qquad |n| \le N_{\tt tail}.
\end{equation}

In practice, whenever we need to use the coefficients $V_n$ in an estimate, we therefore replace them by $\bar V_n$ if $|n|\leq N_{\tt tail}$ and by $0$ otherwise, and use~\eqref{eq:aliasing_control} (resp.~\eqref{eq : tail estimates}) to control the resulting errors.

\subsection{Computation of the bounds} \label{sec:bounds_toy_model}
To compute rigorous bounds for $Y$ and $Z_1$, we must in particular obtain a validated enclosure of $\mathbb{F}(\bx)$ and $D\mathbb{F}(\bx)$.  This requires controlling the coefficients of $g(\bu)$, that is, the sequence $G(\bU)$ with $G$ defined in \eqref{eq:G(U)}. 
Since $g(\bu)(t) = -\bu(t) - \varepsilon_1 \bu(t - 1 - \eps_2 \bu(t)) + \phi(t) $, this boils down to estimating the Fourier coefficients of the delayed periodic function $\bu(t - 1 - \eps_2 \bu(t))$, which we explained how to do in Section~\ref{sec:fourier_coeff_delayed_term}. 

\subsubsection{The \boldmath$Y$\unboldmath~bound}
%


%
%
Let us define 
\begin{align}\label{def : approx sol in sequence}
    \bar{X} \bydef (\bar{\beta}, \bar{U}) \in \R^m \times \ell^1 \quad  \text{ where } \bar{U} \bydef \mathcal{F}(\bar{u})  \text{ and } \bar{u} = \pi^{\leq N} \bar{u},
\end{align}
our approximate solution represented by its Fourier coefficients. Moreover, let us denote $(\beta_0, \bar{W}) \bydef AF(\bar{X})$, where $(\beta_0, \bar{W}) \in \R^m \times \ell^1$. 
\begin{lemma}\label{lem : Y bound toy model}
    Let $Y$ be a bound satisfying
    \begin{align}
        Y \geq \max\left\{|\beta_0|,~ \|\bar{W}\|_{\ell^1}, ~ \sum_{n \in \mathbb{Z}} |n\bar{W}_n| \right\}.
    \end{align}
    Then, $\|\mathbb{A}\mathbb{F}(\bx)\|_H \leq Y.$
\end{lemma}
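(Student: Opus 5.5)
The plan is to pass from the function-space object $\mathbb{A}\mathbb{F}(\bx)\in H$ to its Fourier representation and then use the elementary bound of a sup norm by the $\ell^1$ norm of the coefficients. Write $\mathbb{A}\mathbb{F}(\bx) = (\beta_0, \bw)$ with $\bw \in C^1_{per}([-\pi,\pi])$. By the definition of the Fourier representations $A = \mathcal{F}\mathbb{A}\mathcal{F}^{-1}$ and $F$ of $\mathbb{F}$, we have $(\beta_0,\bar W) = AF(\bar X)$ with $\bar W = \mathcal{F}(\bw)$. The scalar component is identical in both representations, so the first entry of the norm, $|\beta_0|_\infty$, is controlled directly by $Y$.

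For the function component, the first step is to justify that $\bw = \mathcal{F}^{-1}(\bar W)$ pointwise, with $\bar W \in \ell^1$ and $(in\bar W_n)\in\ell^1$. Since $\mathbb{A} = \mathbb{A}^{\le N} + \pi^{>N}$, we can write $\bw = \pi^{\le N}(\text{trig. polynomial}) + \pi^{>N}f(\bx)$. Here $f(\bx) = \bu - \bb - \mathbb{L}_1 g(\bu)$, and $g(\bu)$ is analytic on a strip because it is built from a composition of trigonometric polynomials, as noted in Section~\ref{sec:tail_coefficients}. Consequently $\bw$ is analytic, its coefficients decay geometrically, and the Fourier series of $\bw$ and $\bw'$ converge absolutely and uniformly. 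If $\sum|n\bar W_n|=\infty$ then $Y=\infty$ and there is nothing to prove, so we may assume it is finite.

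Next we use the triangle inequality: $|\bw(t)| \le \sum_n |\bar W_n| = \|\bar W\|_{\ell^1}$ for every $t$, so $\|\bw\|_{C^0}\le \|\bar W\|_{\ell^1}$. Differentiating termwise, which is legitimate by the uniform convergence above, gives $\bw'(t)=\sum_n in\bar W_n e^{int}$, hence $\|\bw'\|_{C^0}\le \sum_n|n\bar W_n|$. For $m>1$ the same argument applies componentwise, and the norm maxima over components match the $\ell^1$ definition. Combining these with the expression for $\|\cdot\|_H$ yields $\|\mathbb{A}\mathbb{F}(\bx)\|_H\le Y$.

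The only point needing care is the regularity and identification step: checking that $\mathcal{F}$ commutes with $\mathbb{A}$ applied to $\mathbb{F}(\bx)$, and that the tail $\pi^{>N}\mathbb{L}_1 g(\bu)$ has summable coefficients weighted by $|n|$. I would handle this using the analyticity of $g(\bu)$, which gives the geometric decay estimate \eqref{eq : tail estimates}. Everything else is the standard inequality $\|\cdot\|_{C^0}\le\|\mathcal{F}(\cdot)\|_{\ell^1}$.
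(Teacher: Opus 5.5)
Your proposal is correct and follows essentially the same route as the paper, whose proof is a one-line application of $\|\mathcal{F}^{-1}(U)\|_{C^0}\le\|U\|_{\ell^1}$ and $\|(\mathcal{F}^{-1}(U))'\|_{C^0}\le\sum_{n}|n||U_n|$ to $\bar{W}$. Your extra justification that $\bar{W}$ decays geometrically (via analyticity of $g(\bar{u})$, hence of $\mathbb{A}\mathbb{F}(\bar{x})$) is the ``sufficient decay'' that the paper takes for granted from its standing restriction to analytic functions in the Fourier-representation section.
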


\begin{proof}
    The proof is a direct consequence of the inequalities $\|\mathcal{F}^{-1}(U)\Vert_{C^0} \leq \|U\|_{\ell^1}$ and $\|\left(\mathcal{F}^{-1}(U)\right)'\Vert_{C^0} \leq \sum_{n\in\Z} |n|\vert U_n\vert$ for all $U \in \ell^1$ with sufficient decay, applied to $\bar{W}$.
\end{proof}
 In practice, the rigorous evaluation of $F(\bar{X})$ is achieved using the technique presented in Section~\ref{sec:fourier_coeff_delayed_term}. Note that $F(\bar{X})$ has infinitely many nonzero coefficients, since $v(t) = \bu(t-1-\varepsilon_2\bu(t))$ is not a trigonometric polynomial. Denoting by $V$ the Fourier coefficients of $v$, and since $\bu$ and $\phi$ have order at most $N$, we have $(F(\bar{X}))_n = \frac{\varepsilon_1 V_n}{in}$ for $|n|>N$. As $A = A^{\le N} + \Pi^{>N}$, the coefficients $\bar{W}_n$ with $|n|>N$ are therefore controlled by the enclosures \eqref{eq:aliasing_control} for $N<|n|\le N_{\tt tail}$ (assuming $N \le N_{\tt tail}$), and by the geometric bound \eqref{eq : tail estimates} for $|n|>N_{\tt tail}$, which yields for instance $\sum_{|n|>N_{\tt tail}} |n\bar{W}_n| \le \frac{2|\varepsilon_1| C \bar\nu^{-N_{\tt tail}}}{\bar\nu-1}$. Similarly, the zeroth coefficient $(L_1G(\bar U))_0$ involves all the coefficients of $G(\bar U)$, and the contribution of those with $|n|>N_{\tt tail}$ is bounded using \eqref{eq : tail estimates}.

\subsubsection{Computation of the \boldmath$Z_2$\unboldmath~bound}\label{ssec : Z2 forcing}
We are now in a position to present the computation of the bound $Z_2$. 
For this specific example it does not depend on $r$, but the bound $Z_2$ will depend on $r$ for the examples of Section~\ref{sec : autonomous SDDE} and Section~\ref{sec:threshold_SDDE}. Therefore, we choose to keep the $r$ in the notation even here.
%
%
%
\begin{lemma}\label{lem : Z2 forcing}
    Let $r >0$ and let $\bar{x} = (\bar{\beta},\bu)$ be as in \eqref{def : approx solution forcing}. Then, let $\hat{Z}_2(r)$ be defined as follows
   \begin{equation}\label{def : Z2 hat}
    \begin{aligned}
   \hat{Z}_{2}(r) \bydef & ~~  |\varepsilon_1 \varepsilon_2|( |\varepsilon_2| \|\bar{u}''\|_{C^0}  + 2).
\end{aligned}
\end{equation}
Defining $Z_2(r)$ as 
\begin{align*}
    Z_2(r) \bydef \max\{\|\mathbb{A}\mathbb{D}^\dagger\|_{B(H_0)}, ~ \|\mathbb{D}\mathbb{A}\mathbb{D}^\dagger\|_{B(H_0)}\} \hat{Z}_2(r)
\end{align*}
we have that $\|\mathbb{A}(D\mathbb{F}(\bar{x} +x) - D\mathbb{F}(\bar{x}))\|_{B(H)} \leq Z_2(r)r$ for all $x \in B_r(0) \subset H$.
\end{lemma}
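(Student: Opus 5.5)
Using the decomposition \eqref{eq : decomposition DF with DQ}, we write $D\mathbb{F}(\bx+x)-D\mathbb{F}(\bx) = \mathbb{D}^\dagger\,(D\mathbb{Q}(\bx+x)-D\mathbb{Q}(\bx))$, since $\mathbb{I}$ does not depend on the point. Hence for $h\in H$,
\[
\|\mathbb{A}(D\mathbb{F}(\bx+x)-D\mathbb{F}(\bx))h\|_H = \max\bigl\{\|\mathbb{A}\mathbb{D}^\dagger k\|_{H_0},\ \|\mathbb{D}\mathbb{A}\mathbb{D}^\dagger k\|_{H_0}\bigr\},
\]
where $k \bydef (D\mathbb{Q}(\bx+x)-D\mathbb{Q}(\bx))h \in H_0$. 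Here we use \eqref{eq : identity norm H with H0 H1}. The problem is then reduced to showing $\|k\|_{H_0}\le \hat Z_2(r)\, r\,\|h\|_H$ whenever $\|x\|_H\le r$.

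Write $x=(\beta_x,w)$ and $h=(\beta_h,\eta)$. Since $\mathbb{G}$ is linear, $\partial_u g(u)\eta = -\eta - \eps_1 D\psi(u)\eta$, and the $-\eta$ term cancels in the difference. Because $|\mathbb{M}f|\le\|f\|_{C^0}$, both components of $k$ are controlled by $|\eps_1|\,\|(D\psi(\bu+w)-D\psi(\bu))\eta\|_{C^0}$. We now use \eqref{eq : derivative of psi}. Set $s(t)\bydef t-1-\eps_2\bu(t)$ and $\tilde s(t)\bydef s(t)-\eps_2 w(t)$, so that $|\tilde s(t)-s(t)|\le|\eps_2|\,r$. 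The difference splits into two pieces:
\[
-\eps_2\bigl[(\bu'+w')(\tilde s(t)) - \bu'(s(t))\bigr]\eta(t) \;+\; \bigl[\eta(\tilde s(t))-\eta(s(t))\bigr].
\]

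Each bracket is estimated by the mean value theorem.
\begin{itemize}
\item In the first bracket, $|\bu'(\tilde s)-\bu'(s)|\le \|\bu''\|_{C^0}|\eps_2| r$, because $\bu$ is a trigonometric polynomial and so $C^2$. In addition $|w'(\tilde s)|\le r$. After multiplying by $|\eps_2|\,\|\eta\|_{C^0}$, this piece is at most $|\eps_2|(|\eps_2|\|\bu''\|_{C^0}+1)\,r\,\|h\|_H$.
\item For the second bracket, $|\eta(\tilde s)-\eta(s)|\le \|\eta'\|_{C^0}|\eps_2| r \le |\eps_2|\,r\,\|h\|_H$.
\end{itemize}
Adding the two pieces and multiplying by $|\eps_1|$ gives exactly $|\eps_1\eps_2|(|\eps_2|\|\bu''\|_{C^0}+2)\,r\,\|h\|_H = \hat Z_2(r)\,r\,\|h\|_H$.

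The main point of care is the second bracket. The difference $\eta(\tilde s)-\eta(s)$ can only be bounded using $\eta'$, and this is exactly why the $C^1$ norm of $h$, rather than the $C^0$ norm, is needed and why the map is only Lipschitz from $C^1$ into $C^0$. The other point of care is the term $w'(\tilde s)$, which is not small in the difference sense and is instead absorbed by $\|w'\|_{C^0}\le r$. No bound on $w''$ is available, and none is needed.

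Finally, multiplying by $\max\{\|\mathbb{A}\mathbb{D}^\dagger\|_{B(H_0)},\|\mathbb{D}\mathbb{A}\mathbb{D}^\dagger\|_{B(H_0)}\}$ gives the claim. These operator norms are finite because $\mathbb{D}^\dagger$ maps $H_0$ into $H$ boundedly and $\mathbb{A}\in B(H)$.
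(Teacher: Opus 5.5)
Your proof is correct and follows essentially the same route as the paper. It uses the same factorization $D\mathbb{F}(\bx+x)-D\mathbb{F}(\bx)=\mathbb{D}^\dagger\bigl(D\mathbb{Q}(\bx+x)-D\mathbb{Q}(\bx)\bigr)$, the same splitting of $\|\cdot\|_H$ into the $H_0$ norm and the $H_1$ seminorm, and the same two mean-value estimates (one using $\|\bu''\|_{C^0}$ together with $\|w'\|_{C^0}\le r$, the other using $\|\eta'\|_{C^0}$) to produce $\hat Z_2(r)$; the only cosmetic difference is that the paper first isolates $\|\partial_u g(\bu+u)-\partial_u g(\bu)\|_{B(C^1,C^0)}\le \hat Z_2(r)\,r$ and then does the operator reduction, whereas you do the reduction first.
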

\begin{proof}
First, we prove that 
\begin{align}\label{eq : inequality g and hat Z2}
    \|\partial_u g(\bu + u) - \partial_u {g}(\bu) \Vert_{B(C^1,C^0)}
    &\leq  \hat{Z}_{2}(r) r 
\end{align}
for all $u \in B_r(0) \subset C^1_{per}([-\pi,\pi])$. Given $v \in C^1_{per}([-\pi,\pi])$,  $g$ is differentiable at $v$ and we have
\begin{equation}\label{eq : derivative of g toy model}
    \begin{aligned}
    \partial_u g(v)h =  \partial_{2} \mathbb{G}(\psi(v), v)h  + \partial_1 \mathbb{G}(\psi(v),v) D\psi(v)h &= - h - \varepsilon_1 D\psi(v)h \\
    & =  - h - \varepsilon_1 \left(- \varepsilon_2   v'(t-1-\varepsilon_2v(t)) h + h(t-1-\varepsilon_2v(t))\right)
\end{aligned}
\end{equation}
where we used \eqref{eq : derivative of psi}.
Let $u \in B_r(0) \subset C^1_{per}([-\pi,\pi])$ and $h \in B_1(0) \subset C^1_{per}([-\pi,\pi])$. Using the above, we have that 
\begin{align*}
    \|\partial_u g(\bu + u)h - \partial_u {g}(\bu)h\Vert_{C^0} &\leq  |\varepsilon_1 \varepsilon_2| \|((\bar{u}+u)'(t-1-\varepsilon_2(\bar{u}+u)(t)) - \bar{u}'(t-1-\varepsilon_2\bar{u}(t)) )h\|_{C^0}\\
    & \quad +|\varepsilon_1| \|h(t-1-\varepsilon_2(\bar{u}+u)(t)) - h(t-1-\varepsilon_2\bar{u}(t))\|_{C^0}.
\end{align*}
Using the mean value inequality, we get
\begin{align*}
    \|h(t-1-\varepsilon_2(\bar{u}+u)(t)) - h(t-1-\varepsilon_2\bar{u}(t))\|_{C^0} \leq |\varepsilon_2|\|h'\|_{C^0} \|{u}\|_{C^0}.
\end{align*}
On the other hand, we have
\begin{align*}
    &\|((\bar{u}+u)'(t-1-\varepsilon_2(\bar{u}+u)(t)) - \bar{u}'(t-1-\varepsilon_2\bar{u}(t)) )h\|_{C^0}\\
    &\qquad\qquad\qquad\leq \|(\bar{u}'(t-1-\varepsilon_2(\bar{u}+u)(t)) - \bar{u}'(t-1-\varepsilon_2\bar{u}(t)) )h\|_{C^0} +  \|u'(t-1-\varepsilon_2(\bar{u}+u)(t))h\|_{C^0}\\
    &\qquad\qquad\qquad\leq |\varepsilon_2| \|\bar{u}''\|_{C^0} \|u\|_{C^0} \|h\|_{C^0} + \|u'\|_{C^0} \|h\|_{C^0}.
\end{align*}
Using that $\|h\|_{C^1} \leq 1$
and that $\|u\|_{C^1} \leq r$, we get
\begin{align*}
    \|\partial_u g(\bu + u)h - \partial_u {g}(\bu)h\Vert_{C^0} &\leq |\varepsilon_1 \varepsilon_2|( |\varepsilon_2| \|\bar{u}''\|_{C^0} r + 2r) = \hat{Z}_{2}(r) r.
\end{align*}
Overall, we have proven \eqref{eq : inequality g and hat Z2}.
Let us now compute the upper bound $Z_2$. Let $x = (\beta,u) \in B_r(0) \subset H$. In particular, $u \in B_r(0)\subset C^1_{per}([-\pi,\pi]) $. Recalling \eqref{eq : decomposition DF with DQ}, we have
\begin{align}\label{eq : difference of DF}
 D\mathbb{F}(\bx + x) - D\mathbb{F}(\bx)   = \mathbb{D}^\dagger \begin{pmatrix} 
0 & \mathbb{M}
\vspace{.1cm} \\ 
\displaystyle 
0 &   -I
\end{pmatrix} \begin{pmatrix}
 0&   0\\
 0&   \partial_u g(\bu +u) - \partial_u g(\bu)
\end{pmatrix}.
\end{align}
Picking an arbitrary $z \in H$ and recalling~\eqref{eq : identity norm H with H0 H1}, we get
\begin{align*}
    \|\mathbb{A}(D\mathbb{F}(\bx + x)-D\mathbb{F}(\bx))z\|_H = \max\{\|\mathbb{A}(D\mathbb{F}(\bx + x)-D\mathbb{F}(\bx))z\|_{H_0}, \|\mathbb{A}(D\mathbb{F}(\bx + x)-D\mathbb{F}(\bx))z\|_{H_1}\}.
\end{align*}
We estimate each of the terms on the right-hand side of the above identity.
First, straightforward computations yield
\begin{align}\label{eq : identity norm M}
    \left\|\begin{pmatrix} 
0 & \mathbb{M}
\vspace{.1cm} \\ 
\displaystyle 
0 &   -I
\end{pmatrix}\right\|_{B(H_0)} = 1.
\end{align}
Using \eqref{eq : difference of DF} combined with \eqref{eq : identity norm M}, we have that 
\begin{align*}
\|\mathbb{A}(D\mathbb{F}(\bx + x)-D\mathbb{F}(\bx))\|_{B(H,H_0)} &\leq  \|\mathbb{A}\mathbb{D}^\dagger\|_{B(H_0)}
\left\| 
\begin{pmatrix} 
0 & \mathbb{M}
\vspace{.1cm} \\ 
\displaystyle 
0 &   -I
\end{pmatrix} 
\begin{pmatrix}
 0&   0\\
 0&   \partial_u g(\bu +u) - \partial_u g(\bu)
\end{pmatrix} \right\|_{B(H,H_0)}\\
&\leq \|\mathbb{A}\mathbb{D}^\dagger\|_{B(H_0)} \|\partial_u g(\bu +u) - \partial_u g(\bu) \|_{B(C^1,C^0)}.
\end{align*}
On the other hand, using \eqref{eq : identity norm H1 and D}, we have
\begin{align*}
    \|\mathbb{A}(D\mathbb{F}(\bx + x)-D\mathbb{F}(\bx))z\|_{H_1} &=   \|\mathbb{D}\mathbb{A}(D\mathbb{F}(\bx + x)-D\mathbb{F}(\bx))z\|_{H_0}  \\
    &\leq  \|\mathbb{D}\mathbb{A}\mathbb{D}^\dagger\|_{B(H_0)}\|\partial_u g(\bu +u) - \partial_u g(\bu)\|_{B(C^1,C^0)}\|z\|_H.
\end{align*}
We conclude the proof using \eqref{eq : inequality g and hat Z2}.
\end{proof}
The quantity $\hat{Z}_2(r)$ is computable, and therefore getting a computable $Z_2(r)$ bound only requires computable upper bounds for the operator norms of $\mathbb{A}\mathbb{D}^\dagger$ and $\mathbb{D}\mathbb{A}\mathbb{D}^\dagger$, which can be achieved using Lemma~\ref{lem : from infinite to finite} and Lemma~\ref{lem : sampling operator norm}.
\subsubsection{Truncation error estimates}

Before presenting the computation of the bound $Z_1$, we first provide several Fourier analysis estimates allowing us to control the truncation operator $\pi^{>N}$. We first introduce, for $N\in\N\cup\{0\}$,
\begin{align}\label{def : Kolmogorov constant expression}
    \theta_N = \frac{1}{\pi} \int_{0}^{\pi} |\kappa_N(t)| dt \quad  \text{ where } \kappa_N(t) \bydef 2\sum_{k=N+1}^\infty \frac{\sin(kt)}{k} =  \pi - t - 2 \sum_{k=1}^N \frac{\sin(kt)}{k}, \quad t \in (0,2\pi).
\end{align}
\begin{lemma}
\label{lem:piNC0C1}
For all $h \in C^1_{per}$ and $N\in\N\cup\{0\}$,
\begin{align}
\label{def : Kolmogorov constant}
    \|\pi^{>N} h\|_{C^0} \leq \theta_N \|h'\|_{C^0},
\end{align}
and the constant $\theta_N$ is optimal. 
\end{lemma}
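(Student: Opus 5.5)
Write $\pi^{>N}h$ as a convolution of $h'$ with an explicit kernel. For $h\in C^1_{per}$ with coefficients $H_n$, we have $(h')_n = inH_n$. Hence, for $|n|>N$, $H_n = (h')_n/(in)$, and
\[
\pi^{>N}h(t) = \sum_{|n|>N} \frac{(h')_n}{in} e^{int} = \frac{1}{2\pi}\int_{-\pi}^{\pi} h'(s)\, K_N(t-s)\, ds,
\]
where the kernel is
\[
K_N(\sigma) \bydef \sum_{|n|>N} \frac{e^{in\sigma}}{in} = 2\sum_{k=N+1}^\infty \frac{\sin(k\sigma)}{k} = \kappa_N(\sigma).
\]
To justify exchanging sum and integral, note that $\kappa_N\in L^2$ (indeed bounded away from the jump) and that Parseval for $L^2$ pairings applies since $h'\in C^0\subset L^2$. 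The resulting identity holds in $L^2$, and since both sides are continuous in $t$ (the right side is a convolution of an $L^1$ function with a continuous one), it holds pointwise. The closed form $\pi-\sigma-2\sum_{k\le N}\sin(k\sigma)/k$ on $(0,2\pi)$ follows from the classical sawtooth series $\sum_{k\ge1}\sin(k\sigma)/k=(\pi-\sigma)/2$.

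The upper bound is then immediate:
\[
|\pi^{>N}h(t)|\le \|h'\|_{C^0}\,\frac{1}{2\pi}\int_{-\pi}^{\pi}|\kappa_N(\sigma)|\,d\sigma = \theta_N\|h'\|_{C^0}.
\]
The last equality uses that $\kappa_N$ is odd, so $\frac{1}{2\pi}\int_{-\pi}^{\pi}|\kappa_N| = \frac1\pi\int_0^\pi|\kappa_N|$.

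For optimality, the natural extremizer is $h'(s)=\operatorname{sign}\kappa_N(-s)$, evaluated at $t=0$. This function has mean zero, since $\kappa_N$ is odd and hence so is its sign. Its antiderivative $h$ is therefore periodic, and $\pi^{>N}h(0)=\theta_N$. It is not $C^0$, however. This is the main (though mild) obstacle. I would handle it by approximating $\operatorname{sign}\kappa_N(-\cdot)$ by continuous odd functions $w_\varepsilon$ with $|w_\varepsilon|\le1$ that agree with it away from $\varepsilon$-neighbourhoods of the finitely many zeros of $\kappa_N$ in $(-\pi,\pi]$. Here $\kappa_N$ is a trigonometric polynomial plus a linear term on $(0,2\pi)$, so it is real-analytic there and has finitely many zeros, plus the jump at $0$.

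Oddness of $w_\varepsilon$ keeps its mean zero, so $h_\varepsilon=\mathbb{L}_1 w_\varepsilon\in C^1_{per}$ with $\|h_\varepsilon'\|_{C^0}\le1$. Moreover,
\[
\pi^{>N}h_\varepsilon(0)=\frac1{2\pi}\int w_\varepsilon(s)\kappa_N(-s)\,ds\to\theta_N
\]
by dominated convergence, since $\kappa_N$ is integrable. Hence no constant smaller than $\theta_N$ works, proving optimality. If $\theta_N=0$ the claim is trivial, but this never happens since $\kappa_N\not\equiv0$.
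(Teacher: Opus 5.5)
Your proposal is correct and follows the paper's route. Like the paper, you write $\pi^{>N}h$ as the convolution $\kappa_N \ast h'$, using that $\kappa_N$ has Fourier coefficients $\frac{1}{in}$ for $|n|>N$, and bound it by the $L^1$ norm of the kernel, which equals $\theta_N$ because $\kappa_N$ is odd. The paper does not prove optimality and simply cites Kolmogorov for it, whereas your approximation of the extremizer $h' = \mathrm{sign}\,\kappa_N(-\cdot)$ by continuous odd functions, together with dominated convergence, correctly supplies that part.
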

\begin{proof}
    This was first proven by Kolmogorov in~\cite{Kolmogorov1935}, and follows from the observation that $\pi^{>N} h$ can be written as a convolution against $h'$, since convolution corresponds to multiplication of Fourier coefficients and $\kappa_N$ has Fourier coefficients $\frac{1}{in}$ for $|n| > N$ and $0$ otherwise:
\begin{equation}
\label{eq:truncationkernel}
    (\pi^{>N} h)(t) = (\kappa_N\ast h')(t) = \frac{1}{2\pi}\int_{-\pi}^{\pi}\kappa_N(t-s) h'(s) ds. \quad\qedsymbol
\end{equation}
\let\qed\relax 
\end{proof}
Kolmogorov also proved that $\theta_N$ behaves asymptotically like $\theta_N = \frac{4}{\pi^2} \frac{\ln(N)}{N} + \mathcal{O}(\frac{1}{N})$. We derive below computable upper bounds for $\theta_N$ that can be used for a fixed $N$. Before doing so, we collect additional truncation estimates involving $\theta_N$ that will prove useful later on.
\begin{lemma}
\label{lem:piNL1norm}
For all $N\in\N$, with $\theta_N$ as in~\eqref{def : Kolmogorov constant expression}, we have
\begin{align}
\label{eq:piNL1}
\|\pi^{>N}\mathbb{L}_1 \|_{B(C^0)}\leq \theta_N.
\end{align}
Moreover, for all $h \in C^0_{per}$,
\begin{align}
\label{eq:piNL1piNcst}
    \pi^{\leq N}\mathbb{L}_1 \pi^{>N} h = \mathbb{M} \mathbb{L}_1 \pi^{>N} h = -(\pi^{>N} \mathbb{L}_1 h)(\pi),
\end{align}
which shows that $\pi^{\leq N}\mathbb{L}_1 \pi^{>N}$ ranges over constant functions only, and, combined with~\eqref{eq:piNL1}, that
\begin{align}
\label{eq:piNL1piNnorm}
    \|\pi^{\leq N}\mathbb{L}_1 \pi^{>N} \|_{B(C^0)}  \leq \theta_N.
\end{align}
\end{lemma}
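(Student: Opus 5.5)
For the first claim \eqref{eq:piNL1}, fix $h \in C^0_{per}$ and set $w \bydef \mathbb{L}_1 h$. By \eqref{eq : L1 operator functions}, $w \in C^1_{per}$ and, by \eqref{eq : id dt versus L}, $w' = h - \mathbb{M}h$. Lemma~\ref{lem:piNC0C1} applied to $w$ gives
\[
\|\pi^{>N}\mathbb{L}_1 h\|_{C^0} \le \theta_N \|h - \mathbb{M}h\|_{C^0}.
\]
This is not yet the claim, because $\|h-\mathbb{M}h\|_{C^0}$ can be as large as $2\|h\|_{C^0}$. To remove the factor $2$, I will use the kernel representation \eqref{eq:truncationkernel} directly:
\[
\pi^{>N}w = \frac{1}{2\pi}\int_{-\pi}^{\pi}\kappa_N(\cdot-s)\,(h(s)-\mathbb{M}h)\,ds .
\]
Since $\kappa_N$ has zero mean (its Fourier coefficient at $n=0$ vanishes), the constant $\mathbb{M}h$ integrates to zero against it. Hence $\pi^{>N}\mathbb{L}_1 h = \kappa_N * h$, and
\[
\|\pi^{>N}\mathbb{L}_1 h\|_{C^0} \le \frac{1}{2\pi}\int_{-\pi}^{\pi}|\kappa_N| \,\|h\|_{C^0} = \theta_N\|h\|_{C^0}.
\]
The last equality uses that $\kappa_N$ is odd, so $\frac{1}{2\pi}\int_{-\pi}^{\pi}|\kappa_N| = \frac1\pi\int_0^\pi|\kappa_N|$. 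This is the step needing care: without the mean-zero observation one only gets $2\theta_N$.

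For \eqref{eq:piNL1piNcst}, let $h\in C^0_{per}$ and $k\bydef\pi^{>N}h$, so that $\mathbb{M}k=0$ and $\mathbb{L}_1k(t)=\int_{-\pi}^t k$. The operator $\mathbb{L}_1$ acts diagonally on the modes $n\neq0$, since $(L_1U)_n = U_n/(in)$. Consequently $\mathbb{L}_1k$ has no Fourier modes with $0<|n|\le N$, and $\pi^{\le N}\mathbb{L}_1 k$ reduces to the mean $\mathbb{M}\mathbb{L}_1k$, a constant. At the level of $C^0$ functions this uses only $\pi^{\le N}$ defined through Fourier coefficients, so no convergence issue arises.

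To identify the constant, I use $\pi^{\le N}\mathbb{L}_1 = \mathbb{L}_1\pi^{\le N}$ on mean-zero parts, together with the normalization $(\mathbb{L}_1 f)(-\pi)=0$ and the identity $\mathbb{L}_1 h = \pi^{\le N}\mathbb{L}_1 h + \pi^{>N}\mathbb{L}_1 h$. The relation $\pi^{>N}\mathbb{L}_1 h = \mathbb{L}_1k - \mathbb{M}\mathbb{L}_1 k$ holds because both sides are mean zero with equal derivatives $k$. Evaluating at $-\pi$, and using that $(\mathbb{L}_1k)(-\pi)=0$, gives $\mathbb{M}\mathbb{L}_1k = -(\pi^{>N}\mathbb{L}_1h)(-\pi) = -(\pi^{>N}\mathbb{L}_1h)(\pi)$ by periodicity.

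Finally, \eqref{eq:piNL1piNnorm} follows immediately: the sup norm of this constant is bounded by $\|\pi^{>N}\mathbb{L}_1 h\|_{C^0}\le\theta_N\|h\|_{C^0}$ by the first part.
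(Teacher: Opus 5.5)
Your proof is correct and follows essentially the same route as the paper. The bound \eqref{eq:piNL1} comes from $\pi^{>N}\mathbb{L}_1 h = \kappa_N \ast h$, using that $\kappa_N$ has zero mean. The identity \eqref{eq:piNL1piNcst} comes from $\mathbb{L}_1\pi^{>N}h = \mathbb{M}\mathbb{L}_1\pi^{>N}h + \pi^{>N}\mathbb{L}_1 h$ evaluated at an endpoint, and the norm bound \eqref{eq:piNL1piNnorm} then follows from the first part.

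The differences are cosmetic. You justify that identity by equal derivatives and zero means rather than equal Fourier coefficients, and you evaluate at $-\pi$ instead of $\pi$. Your side remark that $\pi^{\le N}$ and $\mathbb{L}_1$ commute ``on mean-zero parts'' is inaccurate, since the two compositions differ precisely by the constant $\mathbb{M}\mathbb{L}_1\pi^{>N}h$. This remark is never used, so it does not affect the argument.
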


\begin{proof}
Let $h\in C^0_{per}$. According to~\eqref{eq:truncationkernel} and~\eqref{eq : id dt versus L}, 
\begin{align*}
    \pi^{>N}\mathbb{L}_1 h &= \kappa_N \ast \left( \partial_t \mathbb{L}_1 h\right) = \kappa_N \ast \left( (I-\mathbb{M}) h\right) = \kappa_N \ast h,
\end{align*}
as $\kappa_N$ has mean $0$, which yields \eqref{eq:piNL1}. 
Next, recall that $\mathbb{L}_1 h \in C^1_{per}$ with $(\mathbb{L}_1 h)' = h - \mathbb{M}h$, so that an integration by parts gives $(\mathcal{F}(\mathbb{L}_1 h))_n = \frac{(\mathcal{F}(h))_n}{in}$ for all $n \neq 0$. Applying this to $h$ and to $\pi^{>N} h$, whose Fourier coefficients vanish for $|n| \leq N$, we see that $\mathbb{L}_1\pi^{>N} h - \mathbb{M}\mathbb{L}_1\pi^{>N} h$ and $\pi^{>N}\mathbb{L}_1 h$ are continuous functions with the same Fourier coefficients, namely $\frac{(\mathcal{F}(h))_n}{in}$ for $|n| > N$ and $0$ otherwise. Hence
\begin{align}
\label{eq:identityLM}
    \mathbb{L}_1\pi^{>N} h = \mathbb{M}\mathbb{L}_1\pi^{>N} h + \pi^{>N}\mathbb{L}_1 h.
\end{align}
Applying $\pi^{\leq N}$ to~\eqref{eq:identityLM} gives the first equality in~\eqref{eq:piNL1piNcst}, and evaluating~\eqref{eq:identityLM} at $\pi$, using that $(\mathbb{L}_1\pi^{>N} h)(\pi) = 0$, gives the second one. Finally,~\eqref{eq:piNL1piNnorm} follows from~\eqref{eq:piNL1piNcst} and~\eqref{eq:piNL1}.
\end{proof}

In the remainder of this section, we present a procedure for computing a rigorous and tight enclosure of $\theta_N$ with high accuracy, for any fixed $N$. Our method is based on the following central proposition, establishing some properties of the function $\kappa_N.$
\begin{proposition}\label{prop : computation Kolmogorov constant}
    Let $N \in \mathbb{N}$ and let $\kappa_N$ be defined as in \eqref{def : Kolmogorov constant expression}. Then, we have the following properties
    \begin{itemize}
        \item $\kappa_N'(t) =  -\frac{\sin\left((N+\frac{1}{2})t\right)}{\sin(t/2)}$.
        \item $\kappa_N$ is strictly monotone on $(t_{j-1}, t_{j})$ for $j = 1, \dots, N+1$, where $t_j = \frac{2\pi j }{2N+1}$ for $j = 0, \dots, N$ and $t_{N+1} = \pi$.
        \item Let $\mathcal{K}_N(t) = \pi t - \frac{t^2}{2}  + 2 \sum_{k=1}^N \frac{\cos(kt)}{k^2}$, then $\mathcal{K}_N' = \kappa_N$ on $(0,2\pi)$.
    \end{itemize}
    Moreover, $\kappa_N$ possesses a unique zero $\tau_j$ on each interval $(t_{j-1}, t_j)$ for all $j = 1, \dots, N$, and no other zero in $(0,\pi)$. Finally, we have 
    \begin{align}\label{eq : value of theta}
        \theta_N = \frac{2}{\pi} \sum_{j=1}^{N} (-1)^{j-1} \mathcal{K}_N(\tau_j)
    - \frac{2}{\pi} \sum_{k=1}^{N} \frac{1}{k^2}
    + (-1)^N \left( \frac{\pi}{2}
      + \frac{2}{\pi} \sum_{k=1}^{N} \frac{(-1)^k}{k^2} \right).
    \end{align}
\end{proposition}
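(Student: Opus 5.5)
The plan is to prove the three bulleted properties by direct differentiation and the Dirichlet kernel identity, then to localize the zeros of $\kappa_N$ by writing it as an integral of the Dirichlet kernel. The formula for $\theta_N$ then follows by splitting $[0,\pi]$ at the zeros and applying the fundamental theorem of calculus to the antiderivative $\mathcal{K}_N$.

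\emph{The three bullets.} On $(0,2\pi)$, termwise differentiation of the finite expression $\kappa_N(t)=\pi-t-2\sum_{k=1}^N \frac{\sin(kt)}{k}$ gives
\[
\kappa_N'(t) = -\Big(1+2\sum_{k=1}^N\cos(kt)\Big).
\]
The bracket is the Dirichlet kernel, which equals $\frac{\sin((N+\frac12)t)}{\sin(t/2)}$. Since $\sin(t/2)>0$ on $(0,\pi]$, the sign of $\kappa_N'$ on $(0,\pi)$ is the sign of $-\sin((N+\frac12)t)$. Its only zeros there are the points $t_j=\frac{2\pi j}{2N+1}$, $j=1,\dots,N$, and it has the constant sign $(-1)^{j}$ on $(t_{j-1},t_j)$, including the last interval $(t_N,\pi)$. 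This gives strict monotonicity on each interval. The identity $\mathcal{K}_N'=\kappa_N$ is again termwise differentiation. Note also that $\mathcal{K}_N$ is a polynomial plus a trigonometric polynomial, so it is continuous on $[0,\pi]$.

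\emph{Zeros.} Since every sine term vanishes at $t=\pi$, we have $\kappa_N(\pi)=0$, and therefore
\[
\kappa_N(t)=\int_t^\pi \frac{\sin((N+\frac12)s)}{\sin(s/2)}\,ds .
\]
Evaluate this at $t=t_j$ and split the integral over the lobes $[t_i,t_{i+1}]$, $i\ge j$, plus the half lobe $[t_N,\pi]$. The lobes alternate in sign, the first one having sign $(-1)^j$. Their absolute values strictly decrease in $i$, because each lobe of $\sin((N+\frac12)s)$ has the same shape while $1/\sin(s/2)$ is strictly decreasing on $(0,\pi)$. The final half lobe is dominated by the preceding full lobe, since half a lobe is at most a full lobe even before the weight comparison.

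An alternating sum with strictly decreasing terms has the sign of its first term, so $(-1)^j\kappa_N(t_j)>0$ for $j=1,\dots,N$. We also have $\kappa_N(0^+)=\pi>0$. Combining these signs with strict monotonicity on each $(t_{j-1},t_j)$, the intermediate value theorem yields exactly one zero $\tau_j\in(t_{j-1},t_j)$ for $j=1,\dots,N$. On $(t_N,\pi)$ the function is monotone, starts from a nonzero value, and ends at $\kappa_N(\pi)=0$, so it has no zero inside that interval. The zeros at the grid points $t_j$ themselves are excluded by the strict sign. I expect this alternating-series comparison, and in particular the careful treatment of the half lobe and the endpoint $t_N$, to be the main point requiring care.

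\emph{Formula for $\theta_N$.} Set $\tau_0=0$ and $\tau_{N+1}=\pi$. The sign analysis shows that $\kappa_N$ has sign $(-1)^j$ on $(\tau_j,\tau_{j+1})$, so
\[
\pi\theta_N=\sum_{j=0}^{N}(-1)^j\big(\mathcal{K}_N(\tau_{j+1})-\mathcal{K}_N(\tau_j)\big)=-\mathcal{K}_N(0)+2\sum_{j=1}^N(-1)^{j-1}\mathcal{K}_N(\tau_j)+(-1)^N\mathcal{K}_N(\pi),
\]
where the second equality is summation by parts. Finally, substitute the endpoint values
\[
\mathcal{K}_N(0)=2\sum_{k=1}^N\frac{1}{k^2},\qquad \mathcal{K}_N(\pi)=\frac{\pi^2}{2}+2\sum_{k=1}^N\frac{(-1)^k}{k^2},
\]
and divide by $\pi$. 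This gives exactly \eqref{eq : value of theta}.
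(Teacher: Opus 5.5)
Your proposal is correct and follows essentially the same route as the paper: the Dirichlet-kernel form of $\kappa_N'$, the identity $\kappa_N(\pi)=0$ to write $(-1)^j\kappa_N(t_j)$ as an alternating sum of strictly decreasing lobe integrals, monotonicity plus the intermediate value theorem to locate the $\tau_j$, and the same telescoping with $\mathcal{K}_N$. Your explicit comparison of the final half lobe with the preceding full lobe (half the unweighted mass, smaller weight) usefully spells out a step the paper only states briefly.
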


\begin{proof}
    The three properties of $\kappa_N$ listed in the bullet points follow from direct computations. For $i = 1, \dots, N$, let $I_i \bydef \int_{t_i}^{t_{i+1}} \frac{|\sin((N+\frac{1}{2})t)|}{\sin(t/2)}dt > 0$. The sequence $(I_i)$ is decreasing, since $t \mapsto \frac{1}{\sin(t/2)}$ is decreasing on $(0,\pi)$ and each interval $(t_i,t_{i+1})$ with $i < N$ covers exactly one period $\frac{2\pi}{2N+1}$ of $|\sin((N+\frac{1}{2})t)|$. The last interval $(t_N,\pi)$ covers only half a period. Since $\kappa_N(\pi) = 0$ and $\sin((N+\frac{1}{2})t)$ has the sign of $(-1)^i$ on $(t_i,t_{i+1})$, we get
    $
        (-1)^j\kappa_N(t_j) = (-1)^{j+1}\int_{t_j}^{\pi}\kappa_N'(t)dt = I_j - I_{j+1} + I_{j+2} - \dots + (-1)^{N-j} I_N > 0
    $
    for all $j = 1, \dots, N$. Since moreover $\kappa_N(t) \to \pi$ as $t \to 0^+$ and $\kappa_N(\pi) = 0$, the monotonicity of $\kappa_N$ provides a unique zero $\tau_j$ in each interval $(t_{j-1}, t_{j})$, $j = 1, \dots, N$, and no zero in $(t_N,\pi)$. Hence, denoting $\tau_0 = 0$ and 
    $\tau_{N+1} = \pi$, $\kappa_N$ has the sign of $(-1)^j$ on $(\tau_j,\tau_{j+1})$ for $j = 0, \dots, N$ and, using a telescopic sum argument, 
    \begin{align*}
        \pi\theta_N = \int_{0}^\pi |\kappa_N(t)| dt =  \sum_{j=0}^{N} (-1)^j \big( \mathcal{K}_N(\tau_{j+1}) - \mathcal{K}_N(\tau_j) \big)
   = -\mathcal{K}_N(0) + (-1)^N \mathcal{K}_N(\pi) + 2\sum_{j=1}^{N} (-1)^{j-1} \mathcal{K}_N(\tau_j).
    \end{align*}
 Substituting
$\mathcal{K}_N(0) = 2\sum_{k=1}^N k^{-2}$ and
$\mathcal{K}_N(\pi) = \frac{\pi^2}{2} + 2\sum_{k=1}^N (-1)^k k^{-2}$ and dividing by $\pi$
gives the claim.
\end{proof}
Using the previous proposition, one can get a rigorous enclosure of $\theta_N$ by rigorously enclosing the zeros $\tau_j$ of the function $\kappa_N$. Using the expression for $\kappa_N$ given in the right-hand side of~\eqref{def : Kolmogorov constant expression}, enclosing its roots is straightforward using rigorous numerics (cf. \cite{code_proof}).

Finally, we note that $\theta_N \leq 1$ for all $N \geq 1$.  For $N \geq 2$, this follows from the Cauchy--Schwarz inequality and Parseval's identity, which give $\theta_N \leq \big(2\sum_{k>N}k^{-2}\big)^{1/2} < \sqrt{2/N}$, while for $N = 1$, Proposition~\ref{prop : computation Kolmogorov constant} yields $\theta_1 < 0.84$. These estimates only serve to check the assumption in Lemma~\ref{lem : Z1 forcing} below. In the bounds of our computer-assisted proofs, we use the much sharper enclosure of $\theta_N$ obtained from Proposition~\ref{prop : computation Kolmogorov constant}.

\subsubsection{Computation of the \boldmath$Z_1$\unboldmath~bound}

We now derive, in several steps, a computable bound $Z_1$ controlling $\|I - \mathbb{A}D\mathbb{F}(\bar{x})\|_{B(H)}$. This is the most crucial estimate for the proof, which guarantees (if $Z_1<1$) that the derivative of the operator $x\mapsto x-\mathbb{A}\mathbb{F}(x)$ at $\bx$ has norm less than $1$. The first step is Lemma~\ref{lem : Z1 forcing}, where we express $Z_1$ in terms of operator norms of derivatives of $g$. Quantitative estimates for these derivatives are then obtained in Lemma~\ref{lem : Z1_12 forcing}. Finally, Lemma~\ref{lem : from infinite to finite} reduces the operator norms of infinite-dimensional operators to operator norms of finite-dimensional ones, and the latter can be efficiently and rigorously estimated using Lemma~\ref{lem : sampling operator norm}.
%
%

%
%
\begin{lemma}\label{lem : Z1 forcing}
   Assume that $\theta_N \leq 1$. Let $Z_{1,0}, Z_{1,1}$ and $Z_{1,2}$ be bounds satisfying
    \begin{equation}\label{eq : components Z1}
        \begin{aligned}
            Z_{1,0} &\geq \max\left\{ \|\pi^{\leq N}(I - \mathbb{A}D\mathbb{F}(\bar{x})) \pi^{\leq N} \|_{B(H,H_0)} ,~  \|\pi^{\leq N}\mathbb{D}(I - \mathbb{A}D\mathbb{F}(\bar{x}))  \pi^{\leq N}\|_{B(H,H_0)}  \right\} \\
            Z_{1,1} &\geq \max\{ \|\mathbb A^{\leq N} \mathbb{D}^\dagger\|_{B(H_0)}, ~ \|\mathbb{D}\mathbb A^{\leq N} \mathbb{D}^\dagger\|_{B(H_0)} \} \|\partial_u g(\bar{u})\Vert_{B(C^0)} \theta_N\\
            Z_{1,2} &\geq  \sup_{\|h\|_{C^1}=1} \|\partial_t\left[ \partial_u g(\bu)  h\right]\Vert_{C^0} \theta_N.
        \end{aligned}
    \end{equation}
    Then, defining $Z_1$ as 
    \begin{align}
        Z_1 = Z_{1,0} + Z_{1,1}  + Z_{1,2}, 
    \end{align}
    we have that $\|I - \mathbb{A}D\mathbb{F}(\bar{x})\|_{B(H)} \leq Z_1.$
\end{lemma}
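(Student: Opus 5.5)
We split $I - \mathbb{A}D\mathbb{F}(\bx)$ with the projections and treat the finite and tail parts separately. By \eqref{eq : decomposition DF with DQ}, $D\mathbb{F}(\bx) = \mathbb{I} + \mathbb{D}^\dagger D\mathbb{Q}(\bx)$. Write $z = \pi^{\le N} z + \pi^{>N} z$ for $z \in H$. Using $\|\cdot\|_H = \max\{\|\cdot\|_{H_0}, \|\mathbb{D}\,\cdot\|_{H_0}\}$ from \eqref{eq : identity norm H with H0 H1}, it suffices to bound the $H_0$ norm and the $\mathbb{D}$-image in $H_0$ of $(I-\mathbb{A}D\mathbb{F}(\bx))z$ for $\|z\|_H \le 1$.

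\textbf{Output decomposition.} Since $\mathbb{A} = \mathbb{A}^{\le N} + \pi^{>N}$, the $\pi^{\le N}$ part of the output is $\pi^{\le N}(I - \mathbb{A}D\mathbb{F}(\bx))$. Its input part $\pi^{\le N}z$ is controlled by $Z_{1,0}$.

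\textbf{Finite-output, tail-input part.} The remaining contribution is $-\mathbb{A}^{\le N}\mathbb{D}^\dagger D\mathbb{Q}(\bx)\pi^{>N}z$, because $\mathbb{I}\pi^{>N}z = \pi^{>N}z$ has no low modes. We bound $\|D\mathbb{Q}(\bx)\pi^{>N} z\|_{H_0}$ by $\|\partial_u g(\bu)\|_{B(C^0)}\|\pi^{>N}u\|_{C^0}$, using the norm identity \eqref{eq : identity norm M}. Then Lemma~\ref{lem:piNC0C1} gives $\|\pi^{>N}u\|_{C^0} \le \theta_N\|u'\|_{C^0} \le \theta_N$. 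This yields $Z_{1,1}$ for both the $H_0$ and $\mathbb{D}$ components, which is where the norms of $\mathbb{A}^{\le N}\mathbb{D}^\dagger$ and $\mathbb{D}\mathbb{A}^{\le N}\mathbb{D}^\dagger$ enter.

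\textbf{Tail output.} The tail output is $\pi^{>N}(I - D\mathbb{F}(\bx))z = -\pi^{>N}\mathbb{D}^\dagger D\mathbb{Q}(\bx)z$. The $\beta$-component vanishes, and in the function component $\mathbb{I}$ cancels on the tail. What remains is $\pi^{>N}\mathbb{L}_1\partial_u g(\bu)u$, up to the sign convention of $D\mathbb{Q}$. For its $H_1$ part, we use that the derivative of this term is $\pi^{>N}\partial_u g(\bu)u$, by \eqref{eq : id dt versus L}, since the mean lies in the low modes. Applying Lemma~\ref{lem:piNC0C1} to the $C^1$ function $\partial_u g(\bu)u$ bounds it by $\theta_N\|\partial_t[\partial_u g(\bu)u]\|_{C^0}$, which is $Z_{1,2}$. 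For the $H_0$ part, we write $\pi^{>N}\mathbb{L}_1 w = \pi^{>N}\mathbb{L}_1\pi^{>N}w$ and apply Lemma~\ref{lem:piNL1norm} to get $\theta_N\|\pi^{>N}w\|_{C^0} \le \theta_N^2\|w'\|_{C^0}$. This is at most $Z_{1,2}$ because $\theta_N \le 1$, which is precisely where that hypothesis is used.

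\textbf{Combining the pieces.} Since the max of a sum is bounded by the sum of the component bounds, adding the three contributions gives $Z_1$. The main obstacle is bookkeeping: identifying that $\pi^{\le N}\mathbb{I}\pi^{>N}=0$, that $\mathbb{I}$ cancels on the tail, and that $\partial_u g(\bu)u$ is genuinely $C^1$ for $u\in C^1$. The last point needs care, since $\partial_u g(\bu)u$ contains $u(t-1-\eps_2\bu(t))$, whose derivative involves $u'$, so we need $\|h\|_{C^1}$ in the sup defining $Z_{1,2}$.
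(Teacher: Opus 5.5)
Your proposal is correct and follows the paper's proof essentially step by step. It uses the same three-term splitting into $\pi^{\le N}(I-\mathbb{A}D\mathbb{F}(\bx))\pi^{\le N}$, $\pi^{\le N}(I-\mathbb{A}D\mathbb{F}(\bx))\pi^{>N}$ and $\pi^{>N}(I-\mathbb{A}D\mathbb{F}(\bx))$, and bounds each term by $Z_{1,0}$, $Z_{1,1}$ and $Z_{1,2}$ respectively in both the $H_0$ norm and the $H_1$ seminorm. The only cosmetic difference is in the $H_0$ part of the tail output: you use the identity $\pi^{>N}\mathbb{L}_1 = \pi^{>N}\mathbb{L}_1\pi^{>N}$ together with Lemma~\ref{lem:piNL1norm}, whereas the paper applies Lemma~\ref{lem:piNC0C1} twice. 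Both routes give the same factor $\theta_N^2 \le \theta_N$.
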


\begin{proof}
    Let $x = (\beta, u) \in B_1(0) \subset H$. Notice the following decomposition using the projection operators defined in Section~\ref{sec : approximate objects and projections}
    \begin{align}\label{eq : Z1 decomposition proof}
        \|(I - \mathbb{A}D\mathbb{F}(\bar{x}))x\|_H &\leq \|\pi^{\leq N}\left(I - \mathbb{A}D\mathbb{F}(\bar{x})\right)\pi^{\leq N}x\|_H  + \|\pi^{\leq N} \left( I- \mathbb{A}D\mathbb{F}(\bar{x})\right)\pi^{> N}x\|_H \\
        &\qquad\qquad
        + \|\pi^{> N}(I - \mathbb{A}D\mathbb{F}(\bar{x}))x\|_H,  
    \end{align}
    where we used that $I = \pi^{\leq N} + \pi^{>N}$ and the triangle inequality. We now estimate the three terms on the right-hand side separately. We are going to show that the first one is bounded by $Z_{1,0}$, the second one by $Z_{1,1}$ and the last one by $Z_{1,2}$. To do so, we heavily use the identity \eqref{eq : identity norm H with H0 H1} to decompose the norm of $H$. We first estimate all the bad terms, namely the $H_1$-seminorms.

First, recall the decomposition \eqref{eq:general_A} of $\mathbb{A} = \mathbb{A}^{\leq N} + \pi^{>N}$.   
Then, using \eqref{eq : identity norm H1 and D} and the fact that $\pi^{\leq N} \pi^{>N} = 0$, we have
\begin{align*}
   \|\pi^{\leq N}(I - \mathbb{A}D\mathbb{F}(\bar{x})) \pi^{>N}x\|_{H_1} &=  \|\mathbb{D}\pi^{\leq N}(I - \mathbb{A}D\mathbb{F}(\bar{x})) \pi^{>N}x\|_{H_0}\\
   &= \|\mathbb{D} \mathbb{A}^{\leq N} D\mathbb{F}(\bar{x}) \pi^{>N}x\|_{H_0}.
\end{align*}
Next, notice that $\mathbb{A}^{\leq N} \mathbb{I} \pi^{>N} = \mathbb{A}^{\leq N}  \pi^{>N} =0$. Consequently, combining \eqref{eq : decomposition DF with DQ} and \eqref{def : Kolmogorov constant}, we have 
\begin{align}\label{eq : Z1 step 0}
   \|\pi^{\leq N}(I - \mathbb{A}D\mathbb{F}(\bar{x})) \pi^{>N}x\|_{H_1} 
    &\leq \|\mathbb{D} \mathbb{A}^{\leq N}\mathbb{D}^\dagger\|_{B(H_0)}\|  D\mathbb{Q}(\bar{x})\|_{B(H_0)} \theta_N \|u'\Vert_{C^0}.
\end{align}
Now,  using \eqref{eq : identity norm M}, we get
\begin{equation}\label{eq : norm of DQ with Dg}
    \| D\mathbb{Q}(\bar{x})\|_{B(H_0)} \leq \|\partial_u g(\bar{u})\|_{B(C^0)},
\end{equation}
and combining the previous estimates,
we obtain that 
\begin{align*}
   \|\pi^{\leq N}(I - \mathbb{A}D\mathbb{F}(\bar{x})) \pi^{>N}x\|_{H_1} 
    &\leq Z_{1,1},
\end{align*}
since $x \in B_1(0).$ Now, using \eqref{eq : decomposition DF with DQ} and \eqref{eq:general_A}, observe that 
\begin{align*}
    \pi^{> N}(I - \mathbb{A}D\mathbb{F}(\bar{x}))  = \pi^{> N} - \pi^{>N}\left(\mathbb{I} + \mathbb{D}^\dagger D\mathbb{Q}(\bar{x})\right) =   - \pi^{>N} \mathbb{D}^\dagger D\mathbb{Q}(\bar{x}), 
\end{align*}
and thus
\begin{align*}
  \|\pi^{> N}(I - \mathbb{A}D\mathbb{F}(\bar{x})) x\|_{H_1}  =  \|\mathbb{D}\pi^{> N}(I - \mathbb{A}D\mathbb{F}(\bar{x})) x\|_{H_0} = \|\mathbb{D}\pi^{> N}\mathbb{D}^\dagger D\mathbb{Q}(\bx) x\|_{H_0}.
\end{align*}
According to the first identity in~\eqref{eq : id dt versus L}, $\mathbb{D}\pi^{> 0}\mathbb{D}^\dagger = \pi^{> 0}$, which readily yields $\mathbb{D}\pi^{> N}\mathbb{D}^\dagger = \pi^{> N}$, and thus
\begin{align*}
  \|\pi^{> N}(I - \mathbb{A}D\mathbb{F}(\bar{x})) x\|_{H_1}  = \|\pi^{>N} D\mathbb{Q}(\bx) x\|_{H_0} =   \|\pi^{>N}  \partial_u g(\bu) u\Vert_{C^0}.
\end{align*}
Then, using \eqref{def : Kolmogorov constant} we have 
\begin{align*}
    \|\mathbb{D}\pi^{> N}(I - \mathbb{A}D\mathbb{F}(\bar{x})) x\|_{H_0}  \leq \theta_N \sup_{ \|h\|_{C^1}=1}\|\partial_t[\partial_u g(\bu) h]\Vert_{C^0}  \leq Z_{1,2}.
\end{align*}
Since the first term in \eqref{eq : Z1 decomposition proof} is bounded by $Z_{1,0}$ by definition, we obtain
\begin{align*}
    \|(I - \mathbb{A}D\mathbb{F}(\bar{x})) x\|_{H_1}  =
    \|\mathbb{D}(I - \mathbb{A}D\mathbb{F}(\bar{x})) x\|_{H_0} 
    \leq Z_{1,0}  +  Z_{1,1} + Z_{1,2}.
\end{align*}
Then, using very similar arguments, 
\begin{align*}
    \|\pi^{\leq N}(I - \mathbb{A}D\mathbb{F}(\bar{x})) \pi^{>N}x\|_{H_0}
    \leq  \|  \mathbb{A}^{\leq N}\mathbb{D}^\dagger\|_{B(H_0)}\| D\mathbb{Q}(\bar{x})\|_{B(H_0)} \theta_N \leq Z_{1,1}.
\end{align*}
Moreover, the first identity of~\eqref{eq : id dt versus L} shows that $\partial_t^2 \mathbb{L}_1 = \partial_t$, and applying~\eqref{def : Kolmogorov constant} twice (which is possible since $\pi^{>N} = (\pi^{>N})^2$ commutes with $\partial_t$) and using $\theta_N \leq 1$ in the last step, we get
\begin{align*}
    \|\pi^{> N}(I - \mathbb{A}D\mathbb{F}(\bar{x})) x\|_{H_0} &= \|\pi^{> N}\mathbb{D}^\dagger D\mathbb{Q}(\bx) x\|_{H_0}  \\
    &=   \| \pi^{>N} \mathbb{L}_1 \partial_u g(\bu)  u\Vert_{C^0}\\
    &\leq \theta_N^2 \| \partial_t^2  \mathbb{L}_1 \partial_u g(\bu)  u\Vert_{C^0} \\
    &=  \theta_N^2 \|   \partial_t[ \partial_u g(\bu)  u] \Vert_{C^0} \\
    & \leq Z_{1,2},
\end{align*}
and thus
\begin{align*}
    \|(I - \mathbb{A}D\mathbb{F}(\bar{x})) x\|_{H_0}  
    \leq Z_{1,0} +  Z_{1,1} + Z_{1,2},
\end{align*}
which finishes the proof.
\end{proof}

We made progress toward an explicit $Z_1$ bound with Lemma~\ref{lem : Z1 forcing}, but it is not yet obvious that the upper bounds for $Z_{1,0}$, $Z_{1,1}$ and $Z_{1,2}$ given there are actually computable. We first derive explicit bounds for the derivatives of $g$ appearing in $Z_{1,1}$ and $Z_{1,2}$.

\begin{lemma}
\label{lem : Z1_12 forcing}
    Let $Z_{1,1}$ and $Z_{1,2}$ be defined as 
    \begin{align*}
        Z_{1,1} &\bydef \max\{ \|  \mathbb{A}^{\leq N}\mathbb{D}^\dagger\|_{B(H_0)}, ~ \| \mathbb{D} \mathbb{A}^{\leq N}\mathbb{D}^\dagger\|_{B(H_0)} \}\left(1 + |\varepsilon_1|(1+|\varepsilon_2| \|\bu'\|_{C^0})\right) \theta_N\\
        Z_{1,2} &\bydef \left( 1 + {|\varepsilon_1|}(1 + |\varepsilon_2| \|\bar{u}'\Vert_{C^0}) \left(1 + |\varepsilon_2|\|\bar{u}''\Vert_{C^0} \right) +     |\varepsilon_1\varepsilon_2|\|\bar{u}'\Vert_{C^0}\right) \theta_N.
    \end{align*}
    Then $Z_{1,1}$ and $Z_{1,2}$ satisfy \eqref{eq : components Z1}.
\end{lemma}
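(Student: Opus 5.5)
Since $Z_{1,1}$ and $Z_{1,2}$ in \eqref{eq : components Z1} differ from the lemma's expressions only in the factors $\|\partial_u g(\bu)\|_{B(C^0)}$ and $\sup_{\|h\|_{C^1}=1}\|\partial_t[\partial_u g(\bu)h]\|_{C^0}$, it suffices to bound these two quantities. The starting point is the explicit formula \eqref{eq : derivative of g toy model} at $v=\bu$:
\[
\partial_u g(\bu)h(t) = -h(t) + \varepsilon_1\varepsilon_2\, \bu'(\sigma(t))\, h(t) - \varepsilon_1 h(\sigma(t)), \qquad \sigma(t) \bydef t-1-\varepsilon_2\bu(t).
\]

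\textbf{The $C^0$ bound.} For $\|\partial_u g(\bu)\|_{B(C^0)}$, note that this formula makes sense for $h\in C^0_{per}$ and defines a bounded operator on $C^0$. Taking suprema term by term gives
\[
\|\partial_u g(\bu)h\|_{C^0} \le \big(1+|\varepsilon_1\varepsilon_2|\|\bu'\|_{C^0}+|\varepsilon_1|\big)\|h\|_{C^0},
\]
since composition with $\sigma$ does not increase the sup norm. This is exactly the factor $1+|\varepsilon_1|(1+|\varepsilon_2|\|\bu'\|_{C^0})$ in $Z_{1,1}$.

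\textbf{The derivative bound.} For $Z_{1,2}$, take $h\in C^1_{per}$ with $\|h\|_{C^1}=1$ and differentiate with the chain rule, using $\sigma'(t)=1-\varepsilon_2\bu'(t)$:
\[
\partial_t[\partial_u g(\bu)h] = -h' + \varepsilon_1\varepsilon_2\, \bu''(\sigma)\,\sigma'\, h + \varepsilon_1\varepsilon_2\, \bu'(\sigma)\, h' - \varepsilon_1 h'(\sigma)\,\sigma'.
\]
Using $\|h\|_{C^0},\|h'\|_{C^0}\le 1$ and $\|\sigma'\|_{C^0}\le 1+|\varepsilon_2|\|\bu'\|_{C^0}$, each term is bounded as follows.
\begin{itemize}
\item The first term gives $1$.
\item The second and fourth terms are grouped as $|\varepsilon_1|\,\|\sigma'\|_{C^0}\,(|\varepsilon_2|\|\bu''\|_{C^0}+1)$, which is at most $|\varepsilon_1|(1+|\varepsilon_2|\|\bu'\|_{C^0})(1+|\varepsilon_2|\|\bu''\|_{C^0})$.
\item The third term gives $|\varepsilon_1\varepsilon_2|\|\bu'\|_{C^0}$.
\end{itemize}
Summing these three contributions and multiplying by $\theta_N$ gives exactly the stated $Z_{1,2}$.

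\textbf{Main difficulty.} There is no real obstacle. The only care needed is to justify the $B(C^0)$ extension of $\partial_u g(\bu)$, which is immediate from the formula, and to group the $\sigma'$ terms so that the product structure of $Z_{1,2}$ appears. Since $\bu$ is a trigonometric polynomial, $\bu''$ exists and is continuous, so all terms are well defined.
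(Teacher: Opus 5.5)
Your proposal is correct and follows the same route as the paper. You bound $\|\partial_u g(\bu)\|_{B(C^0)}$ term by term from \eqref{eq : derivative of g toy model}, then differentiate $\partial_u g(\bu)h$ by the chain rule, and your formula coincides with the paper's identity. Bounding each term with $\|h\|_{C^0},\|h'\|_{C^0}\le 1$ and $\|1-\varepsilon_2\bu'\|_{C^0}\le 1+|\varepsilon_2|\|\bu'\|_{C^0}$ then yields exactly the stated $Z_{1,2}$, whose product structure the paper leaves implicit.
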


\begin{proof}
 We first focus on $Z_{1,1}$. Using Lemma \ref{lem : Z1 forcing}, we simply need to compute $ \|\partial_u g(\bar{u})\Vert_{B(C^0)}$. To this end, we use \eqref{eq : derivative of g toy model} and directly get
\begin{align*}
    \|\partial_u g(\bar{u})\Vert_{B(C^0)} &\leq   \sup_{\|u\Vert_{C^0}=1}  \| u + \varepsilon_1 \left(- \varepsilon_2   \bar{u}'(t-1-\varepsilon_2\bar{u}(t)) u + u(t-1-\varepsilon_2\bar{u}(t))\right)\Vert_{C^0} \\ &\leq 1 + |\varepsilon_1|(1+|\varepsilon_2| \|\bu'\|_{C^0}).
\end{align*}
This provides the expression for $Z_{1,1}$. For the computation of $Z_{1,2}$, we need to estimate the supremum of $\|\partial_t\left[ \partial_u g(\bu)  u\right]\Vert_{C^0}$ over $\|u\|_{C^1}=1$. Our estimation is a direct consequence of the following identity
\begin{align*}
    \partial_t\left[ \partial_u g(\bu)  u\right] = & -u' - \varepsilon_1(1-\varepsilon_2 \bar{u}') u'(t-1-\varepsilon_2\bu) \\ &\qquad \qquad + \varepsilon_1\varepsilon_2 (1-\varepsilon_2 \bu')\bar{u}''(t-1-\varepsilon_2\bu)u + \varepsilon_1\varepsilon_2\bar{u}'(t-1-\varepsilon_2\bu)u'. \qedhere
\end{align*}
\end{proof}

Thanks to the above lemmas, we were able to derive upper bounds $Z_1$ (cf. Lemmas \ref{lem : Z1 forcing} and \ref{lem : Z1_12 forcing}) and $Z_2$ (cf. Lemma \ref{lem : Z2 forcing}) for applying Theorem \ref{thm:radPolyBanach}. However, these bounds still involve operator norms of operators that are potentially infinite-dimensional, which is not yet fully tractable. In order to resolve this, we introduce the following result, allowing us to decompose each of these operator norms as the norm of a finite Galerkin projection plus an explicit bound on the remainder. The finite-dimensional part will then be treated in detail in Section \ref{sec:C0_operator_norms}.

\begin{lemma}\label{lem : from infinite to finite}
Let $\mathbb{B} : H \to H_0$  be a bounded linear operator and let $e  \bydef (0,1) \in H$, where $1$ has to be understood as the constant function $1$. Then the following estimates hold
\begin{equation}\label{eq : estimates operators norms}
    \begin{aligned}
        \|\pi^{\leq N} \mathbb{B} \pi^{\leq N} \mathbb{D}^\dagger\|_{B(H_0)} &\leq \| \pi^{\leq N} \mathbb{B} \pi^{\leq N} \mathbb{D}^\dagger \pi^{\leq N}\|_{B(H_0)} + \theta_N \| \pi^{\leq N} \mathbb{B} e\|_{H_0}\\
        \|\pi^{\leq N} \mathbb{B} \pi^{\leq N} \|_{B(H,H_0)} &\leq \| \pi^{\leq N} \mathbb{B} \pi^{\leq N} \mathbb{D}^\dagger\pi^{\leq N}  \|_{B(H_0)} + (1+\theta_N)\| \pi^{\leq N} \mathbb{B} e \|_{H_0}.
    \end{aligned}
\end{equation}
Moreover, letting $\mathbb{J}(\beta,u) \bydef (0,u)$ for all $(\beta,u) \in H_0$, we have
\begin{align*}
     \| \mathbb{A}  \mathbb{D}^\dagger\|_{B(H_0)} & \leq \|\mathbb{A}^{\leq N}\mathbb{D}^\dagger\pi^{\leq N}\|_{B(H_0)} + \theta_N \|\mathbb{A}^{\leq N}e\|_{H_0} + \theta_N  \\
        \| \mathbb{D}\mathbb{A}  \mathbb{D}^\dagger\|_{B(H_0)} & \leq \|\mathbb{D}\mathbb{A}^{\leq N}\mathbb{D}^\dagger\pi^{\leq N} - \pi^{\leq N}\mathbb{J}\|_{B(H_0)} + \theta_N \|\mathbb{D}\mathbb{A}^{\leq N}e\|_{H_0} + 1.
\end{align*}
\end{lemma}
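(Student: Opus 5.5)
The plan is to exploit two structural facts about $\mathbb{D}^\dagger$ and the projections.

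First, by Lemma~\ref{lem:piNL1norm}, for $(\beta,h)\in H_0$ the map
\[
\pi^{\le N}\mathbb{D}^\dagger\pi^{>N}(\beta,h) = (0,\pi^{\le N}\mathbb{L}_1\pi^{>N}h) = c\, e,
\]
where $c = -(\pi^{>N}\mathbb{L}_1 h)(\pi)$, so that $|c|\le \theta_N\|h\|_{C^0}$ by \eqref{eq:piNL1piNcst}--\eqref{eq:piNL1piNnorm}. The $\beta$-component vanishes because $\pi^{>N}$ kills it.

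Second, for $x=(\beta,u)\in H$ we have $\mathbb{L}_1 u' = u - u(-\pi)$. This holds since $\mathbb{M}u'=0$, and it also holds with $\pi^{\le N}u$ in place of $u$, because $(\pi^{\le N}u)'=\pi^{\le N}u'$. Hence
\[
\pi^{\le N}x = \mathbb{D}^\dagger\pi^{\le N}\mathbb{D}x + (\pi^{\le N}u)(-\pi)\, e .
\]

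For the first inequality in \eqref{eq : estimates operators norms}, I split $\pi^{\le N}\mathbb{D}^\dagger = \pi^{\le N}\mathbb{D}^\dagger\pi^{\le N} + \pi^{\le N}\mathbb{D}^\dagger\pi^{>N}$. Since $\mathbb{D}^\dagger$ maps trigonometric polynomials of order $N$ to trigonometric polynomials of order $N$, the first piece is $\pi^{\le N}\mathbb{D}^\dagger\pi^{\le N}$. The second piece equals $c\,e$, so applying $\pi^{\le N}\mathbb{B}$ gives the bound $\theta_N\|\pi^{\le N}\mathbb{B}e\|_{H_0}$.

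For the second inequality, I use the decomposition of $\pi^{\le N}x$ with $\|x\|_H\le1$. Then $\|\mathbb{D}x\|_{H_0}\le1$. Moreover, $|(\pi^{\le N}u)(-\pi)|\le\|u\|_{C^0}+\|\pi^{>N}u\|_{C^0}\le 1+\theta_N$ by Lemma~\ref{lem:piNC0C1}. Applying $\pi^{\le N}\mathbb{B}\pi^{\le N}$ to the decomposition, and using $\pi^{\le N}e=e$ and $\pi^{\le N}\mathbb{D}^\dagger\pi^{\le N}\mathbb{D}x=\pi^{\le N}\mathbb{D}^\dagger\pi^{\le N}(\pi^{\le N}\mathbb{D}x)$, yields the claim.

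For the bounds on $\mathbb{A}$, I write $\mathbb{A}\mathbb{D}^\dagger = \mathbb{A}^{\le N}\mathbb{D}^\dagger + \pi^{>N}\mathbb{D}^\dagger$. Since $\mathbb{A}^{\le N}=\mathbb{A}^{\le N}\pi^{\le N}$, the same splitting as above gives
\[
\mathbb{A}^{\le N}\mathbb{D}^\dagger = \mathbb{A}^{\le N}\mathbb{D}^\dagger\pi^{\le N} + c\,\mathbb{A}^{\le N}e .
\]
Also $\pi^{>N}\mathbb{D}^\dagger(\beta,h)=(0,\pi^{>N}\mathbb{L}_1h)$, which has norm at most $\theta_N$ by \eqref{eq:piNL1}.

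For $\mathbb{D}\mathbb{A}\mathbb{D}^\dagger$, I use that $\mathbb{D}\pi^{>N}\mathbb{D}^\dagger=\pi^{>N}\mathbb{J}$, established in the proof of Lemma~\ref{lem : Z1 forcing}. Writing $\pi^{>N}\mathbb{J}=\mathbb{J}-\pi^{\le N}\mathbb{J}$, I obtain
\[
\mathbb{D}\mathbb{A}\mathbb{D}^\dagger = \bigl(\mathbb{D}\mathbb{A}^{\le N}\mathbb{D}^\dagger\pi^{\le N}-\pi^{\le N}\mathbb{J}\bigr) + c\,\mathbb{D}\mathbb{A}^{\le N}e + \mathbb{J}.
\]
Since $\|\mathbb{J}\|_{B(H_0)}\le1$, this gives the last inequality.

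The only delicate point I expect is bookkeeping of constants. Specifically, one must check that $\pi^{\le N}\mathbb{L}_1\pi^{>N}$ produces exactly a multiple of $e$ with the $\beta$-slot zero. One must also check that the evaluation $(\pi^{\le N}u)(-\pi)$ is controlled by $1+\theta_N$ rather than by some larger constant. Everything else is a triangle inequality.
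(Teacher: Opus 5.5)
Your proposal is correct and follows essentially the same route as the paper. Both arguments rest on the same three steps: the reconstruction $\pi^{\le N}x = \mathbb{D}^\dagger\pi^{\le N}\mathbb{D}x + (\pi^{\le N}u)(-\pi)\,e$ with $|(\pi^{\le N}u)(-\pi)|\le 1+\theta_N$; the observation from Lemma~\ref{lem:piNL1norm} that $\pi^{\le N}\mathbb{D}^\dagger\pi^{>N}$ produces a multiple of $e$ with coefficient at most $\theta_N\|h\|_{C^0}$; and the decomposition of $\mathbb{D}\mathbb{A}\mathbb{D}^\dagger$ via $\mathbb{J}\pi^{>N}=\mathbb{J}-\pi^{\le N}\mathbb{J}$.
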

\begin{proof}
Recalling \eqref{eq : L1 operator functions}, we have that $\mathbb{L}_1 u' = u - u(-\pi)$ for all $u\in C^1_{per}([-\pi,\pi])$. This implies that  
\begin{equation*}
    \pi^{\leq N} x = \mathbb{D}^\dagger \mathbb{D} \pi^{\leq N} x + (0, (\pi^{\leq N}u)(-\pi))
\end{equation*}
for all $x = (\beta, u) \in H$. Moreover, using \eqref{def : Kolmogorov constant}, notice that 
\begin{align*}
    (0, (\pi^{\leq N}u)(-\pi)) =  (\pi^{\leq N}u)(-\pi) e ~ \text{ and } ~  |(\pi^{\leq N}u)(-\pi)| \leq \|u\|_{C^0} + \|\pi^{>N}u\|_{C^0} \leq  (1+\theta_N) \|u\|_{C^1}.
\end{align*}
Using the above, we obtain that 
\begin{align*}
     \|\pi^{\leq N} \mathbb{B} \pi^{\leq N} \|_{B(H,H_0)} &\leq  \|\pi^{\leq N} \mathbb{B} \pi^{\leq N} \mathbb{D}^\dagger \mathbb{D}  \pi^{\leq N} \|_{B(H,H_0)} + (1+\theta_N) \|\pi^{\leq N} \mathbb{B} e \|_{H_0}\\
     &\leq \|\pi^{\leq N} \mathbb{B} \pi^{\leq N}\mathbb{D}^\dagger   \pi^{\leq N} \|_{B(H_0)} + (1+\theta_N) \|\pi^{\leq N} \mathbb{B} e \|_{H_0},
\end{align*}
since $\mathbb{D}\pi^{\leq N} x  = \pi^{\leq N}\mathbb{D} x$ for all $x \in H$ and  $\|\mathbb{D}\|_{B(H,H_0)} = 1$. This proves the second inequality of~\eqref{eq : estimates operators norms}. 
The first inequality of \eqref{eq : estimates operators norms} follows from $\pi^{\leq N}\mathbb{D}^\dagger = \pi^{\leq N}\mathbb{D}^\dagger\pi^{\leq N} + \pi^{\leq N}\mathbb{D}^\dagger\pi^{>N}$, since by Lemma \ref{lem:piNL1norm}, $\pi^{\leq N}\mathbb{D}^\dagger\pi^{>N}(\beta,v) = (0, \pi^{\leq N}\mathbb{L}_1\pi^{>N}v)$ is a multiple of $e$ with coefficient bounded by $\theta_N\|v\|_{C^0}$.

Next, recalling that $\mathbb{A} = \mathbb{A}^{\leq N} + \pi^{>N}$, we have
\begin{align}\label{eq : decomposition A and D}
    \mathbb{D}\mathbb{A}\mathbb{D}^\dagger &= \mathbb{D}\mathbb{A}^{\leq N}\mathbb{D}^\dagger\pi^{\leq N} + \mathbb{D}\mathbb{A}^{\leq N}\mathbb{D}^\dagger\pi^{> N} + \mathbb{J}\pi^{>N}.
\end{align}
Then, we write $\mathbb{J}\pi^{>N} = \mathbb{J} - \pi^{\leq N}\mathbb{J}$. Since, as above, $\pi^{\leq N}\mathbb{D}^\dagger\pi^{>N}$ maps $(\beta,v)$ to a multiple of $e$ with coefficient bounded by $\theta_N\|v\|_{C^0}$, and $\mathbb{A}^{\leq N} = \mathbb{A}^{\leq N}\pi^{\leq N}$, we obtain the estimate
\begin{align*}
\|\mathbb{D}\mathbb{A}\mathbb{D}^\dagger\|_{B(H_0)} &\leq \|\mathbb{D}\mathbb{A}^{\leq N}\mathbb{D}^\dagger\pi^{\leq N} - \pi^{\leq N}\mathbb{J}\|_{B(H_0)} + \theta_N \|\mathbb{D}\mathbb{A}^{\leq N}e\|_{H_0} + 1.
\end{align*}
Concerning $\|\mathbb{A} \mathbb{D}^\dagger\|_{B(H_0)}$, we use similar ideas and the decomposition
\begin{align*}
    \mathbb{A} \mathbb{D}^\dagger = \mathbb{A}^{\leq N} \mathbb{D}^\dagger \pi^{\leq N}  + \mathbb{A}^{\leq N} \mathbb{D}^\dagger \pi^{> N}  +  \pi^{>N} \mathbb{D}^\dagger,
\end{align*}
where the last term is bounded using~\eqref{eq:piNL1}. 
This concludes the proof.
\end{proof}

\begin{remark}
    In \eqref{eq : decomposition A and D}, one could estimate each term separately, but note that $\Vert \mathbb{J}\pi^{>N} \Vert_{B(H_0)} = \Vert \pi^{>N} \Vert_{B(C^0)}$ grows like the Lebesgue constant $\Lambda_N \bydef \Vert \pi^{\leq N} \Vert_{B(C^0)}$, hence like $\log(N)$ as $N \to \infty$ \cite{Fejer1910}. 
\end{remark}

The estimates~\eqref{eq : estimates operators norms} can be used to control the operator norms appearing in $Z_{1,0}$ (Lemma~\ref{lem : Z1 forcing}) and $Z_{1,1}$ (Lemma~\ref{lem : Z1_12 forcing}), whereas the second part of Lemma~\ref{lem : from infinite to finite} allows us to control the operator norms appearing in $Z_2$ (Lemma~\ref{lem : Z2 forcing}). Our only remaining task for making these estimates fully computable is to provide a way of explicitly bounding $H_0$ operator norms of finite-dimensional operators, ideally in a sharp way, which is accomplished in Section~\ref{sec:C0_operator_norms}. Before that, we first give an explicit description of the finite-dimensional operators involved.

\subsubsection{Explicit Galerkin truncation of the Jacobian in Fourier coefficients}

The bound $Z_{1,0}$ obtained in Lemma~\ref{lem : Z1 forcing} requires some extra care since we need to be able to exactly represent the Galerkin projection of the Jacobian $\pi^{\leq N}D\mathbb{F}(\bar{x})\pi^{\leq N}$. To this end, we use its Fourier series representation $\Pi^{\leq N} DF(\bar{X}) \Pi^{\leq N}.$ Recalling the decomposition of $D\mathbb{F}(\bar{x})$ in \eqref{eq : decomposition DF with DQ}, we need to compute the Fourier coefficients representation of the operator $\partial_u g(\bar u)$. We break down the latter into three pieces.

Indeed, we define the following linear operators on $C^1_{per}([-\pi,\pi])$ 
\begin{align*}
     (\mathbb{L}_2 u)(t) \bydef \bar{u}'(t-1 - \varepsilon_2\bar{u}(t))u(t) \qquad \text{and} \qquad (\mathbb{L}_3u)(t) \bydef u(t-1-\varepsilon_2\bar{u}(t)),
\end{align*}
so that 
\begin{align}
    \partial_u g(\bu)u = - u + \varepsilon_1\varepsilon_2 \mathbb{L}_2 u - \varepsilon_1 \mathbb{L}_3 u.
\end{align}
Our goal is then to derive the Fourier coefficients expression for the operators $\mathbb{L}_2$ and $\mathbb{L}_3.$

First, notice that $\mathbb{L}_2$ is simply a multiplication operator by the function $v(t) \bydef \bar{u}'(t-1-\varepsilon_2\bar{u}(t))$.
Letting $V$ be the Fourier coefficients representation of the function $v$,  we have 
\begin{align*}
    (L_2(U))_n = (V*U)_n  ~~ \text{ for all } n \in \mathbb{Z}.
\end{align*}
Consequently, $L_2$ is a Toeplitz matrix given by the coefficients $V$.
On the other hand, the expression for $L_3$ is given as follows 
\begin{align}\label{def : operator L3}
    ({L}_3)_{n,k} =  \frac{1}{2\pi } \int_{-\pi}^{\pi}  e^{ik(x-1 - \varepsilon_2 \bar{u}(x))} e^{-inx} dx = \mathcal{F}\left(e^{ik(x-1 - \varepsilon_2 \bar{u}(x))}\right)_n.
\end{align}
 Using the above expression, we need to perform $2N+1$ rigorous FFTs for computing the Fourier coefficients of index $|n| \leq N$ of $x \mapsto e^{ik(x-1 - \varepsilon_2 \bar{u}(x))}$ for all $k \in \{-N, \dots, N\}$. This is achieved using the analysis of Section~\ref{sec:fourier_coeff_delayed_term}.
\subsubsection{Computation of finite-dimensional operator norms} \label{sec:C0_operator_norms}

Using the previous section, we are able to derive a rigorous enclosure of the Galerkin projection matrix $\Pi^{\leq N} DF(\bar{X}) \Pi^{\leq N}$, and consequently of $\Pi^{\leq N} - A^{\leq N} \Pi^{\leq N} DF(\bar{X}) \Pi^{\leq N}$. However, in order to compute the bound $Z_{1,0}$ of Lemma~\ref{lem : Z1 forcing}, we need to compute operator norms on $H_0$ involving $\pi^{\leq N} - \mathbb{A}^{\leq N} \pi^{\leq N} D\mathbb{F}(\bar{x}) \pi^{\leq N}$, namely its composition with $\mathbb{D}^\dagger\pi^{\leq N}$ on the right, and with $\mathbb{D}$ on the left (cf. \eqref{eq : components Z1} and Lemma~\ref{lem : from infinite to finite}).
 Similarly, the bounds $Z_{1,1}$ and $Z_2$ of Lemmas~\ref{lem : Z1_12 forcing} and \ref{lem : Z2 forcing} involve the operator norms of $\mathbb{A}^{\leq N}\mathbb{D}^\dagger$, $\mathbb{D}\mathbb{A}^{\leq N}\mathbb{D}^\dagger$, $\mathbb{A}\mathbb{D}^\dagger$ and $\mathbb{D}\mathbb{A}\mathbb{D}^\dagger$, which are simplified thanks to Lemma \ref{lem : from infinite to finite} to operator norms of finite-dimensional operators again. 
Ignoring the $\R^m$ component of $H_0$, which is dealt with separately, we are given a linear operator $\mathbb{B} : C^0_{per}([-\pi,\pi]) \to C^0_{per}([-\pi,\pi])$ such that $\mathbb{B} = \pi^{\leq N}\mathbb{B}\pi^{\leq N}$. Our goal is to compute a sharp upper bound for $\|\mathbb{B}\|_{B(C^0)}$ using the Fourier coefficients representation $B$ of $\mathbb{B}$.

Since $\mathbb{B} = \pi^{\leq N}\mathbb{B}\pi^{\leq N}$, the coefficients $B_{nm}$ of $B$ vanish if $|n| > N$ or $|m| > N$, and $\mathbb{B}$ has the following kernel representation
\begin{equation}\label{eq : kernel formulation finite dim op}
    (\mathbb{B}u)(x) = \int_{-\pi}^{\pi} K(x,y) u(y) dy \quad  \text{ with } K(x,y) \bydef \frac{1}{2\pi}\sum_{n,m=-N}^N B_{nm} e^{i(nx-my)}.
\end{equation}
Since the kernel $K$ is continuous, we have that 
\begin{align}
\label{eq:opnorm_estimate}
    \|\mathbb{B}\Vert_{B(C^0)} = \max_{x \in [-\pi, \pi]} \int_{-\pi}^\pi |K(x,y)| dy.
\end{align}

Let $t_k \bydef \frac{2\pi k}{N_{\tt FFT}}$, $k \in \{0, \dots, N_{\tt FFT}-1\}$, be the grid points of Section~\ref{sec:tail_coefficients}. Note that an upper bound for \eqref{eq:opnorm_estimate} can be obtained as in Section~\ref{sec:tail_coefficients}, by enclosing $K$ on each product $[t_k,t_{k+1}] \times [t_j,t_{j+1}]$ using an interval-valued FFT combined with interval arithmetic (cf.~\cite{HaroSandinVidal2025,MR4930548}).  
However, the overestimation of such an enclosure is controlled by the absolute values of the coefficients $B_{nm}$ and does not benefit from cancellations in $K$, so the intervals must be small compared to $\frac{1}{N}$, which requires $N_{\tt FFT}$ much larger than $N$ and enclosing $K$ on $N_{\tt FFT}^2$ products of intervals. For the large matrices needed in Section~\ref{sec : CAP threshold}, this becomes too expensive.

Instead, we only use the values of $K$ on the grid. Since $K$ is a trigonometric polynomial of order $N$ in each variable, we need inequalities of Marcinkiewicz--Zygmund type (cf. \cite{MarcinkiewiczZygmund1937}), which bound the norms of a trigonometric polynomial $q$ of order $N$ by their discrete counterparts on the grid:
\begin{equation}\label{eq : MZ}
    \|q\|_{C^0} \leq C_\infty \max_{k \leq N_{\tt FFT}-1} |q(t_k)| \quad \text{ and } \quad \|q\|_{L^1} \leq C_1 \frac{2\pi}{N_{\tt FFT}}\sum_{k=0}^{N_{\tt FFT}-1} |q(t_k)|.
\end{equation}
Indeed, given $u$ with $\|u\|_{C^0} \leq 1$, applying the first inequality to the trigonometric polynomial $q = \mathbb{B}u$ of order $N$ in $x$, and the second one to the trigonometric polynomial $q = K(t_k,\cdot)$ of order $N$ in $y$, we get
\begin{equation}\label{eq : MZ bound}
    \|\mathbb{B}\|_{B(C^0)} \leq C_\infty C_1 \frac{2\pi}{N_{\tt FFT}} \max_{k \leq N_{\tt FFT}-1} \sum_{j=0}^{N_{\tt FFT}-1}|K(t_k,t_j)|.
\end{equation}

In the remainder of this discussion, $q$ denotes an arbitrary trigonometric polynomial of order $N$. A first choice of constants follows from Bernstein's inequalities $\|q'\|_{C^0} \leq N\|q\|_{C^0}$ and $\|q'\|_{L^1} \leq N\|q\|_{L^1}$ (cf. \cite{Zygmund2002}). Since every point of $[-\pi,\pi]$ is at distance at most $\frac{\pi}{N_{\tt FFT}}$ from the grid, the first one yields $\|q\|_{C^0} \leq \max_k |q(t_k)| + \frac{\pi N}{N_{\tt FFT}}\|q\|_{C^0}$, and the second one bounds the error of the Riemann sum of $|q|$ on the grid by $\frac{\pi N}{N_{\tt FFT}}\|q\|_{L^1}$. Hence \eqref{eq : MZ} holds with $C_\infty = C_1 = \left(1-\pi N/N_{\tt FFT}\right)^{-1}$ if $N_{\tt FFT} > \pi N$. These constants are close to $1$ only if $N_{\tt FFT}$ is much larger than $N$, and both can be improved.

For $C_\infty$, let $x^*$ be such that $|q(x^*)| = \|q\|_{C^0}$, let $\phi$ be the argument of $q(x^*)$ and let $p \bydef \mathrm{Re}(e^{-i\phi}q)$. Then, $p$ is a real trigonometric polynomial of order $N$ such that $p \leq |q|$, reaching its maximum $\|p\|_{C^0} = \|q\|_{C^0}$ at $x^*$. In particular $p'(x^*) = 0$, and applying Bernstein's inequality twice, that is $\|p''\|_{C^0} \leq N^2\|p\|_{C^0}$, yields $p(x) \geq \|p\|_{C^0}\left(1 - \frac{1}{2}N^2(x-x^*)^2\right)$. Since $x^*$ is at distance at most $\pi / N_{\tt FFT}$ from the grid and $p \leq |q|$, we get that the first inequality in \eqref{eq : MZ} holds with $C_\infty = \left(1-\pi^2N^2/(2N_{\tt FFT}^2)\right)^{-1}$, provided that $N_{\tt FFT} > \pi N/\sqrt{2}$.

The constant $C_\infty$ can be further improved slightly, at no additional cost, by replacing Bernstein's inequality with the Bernstein--Szeg\H{o} inequality $p'(x)^2 + N^2 p(x)^2 \leq N^2 \|p\|_{C^0}^2$, which holds for every real trigonometric polynomial $p$ of order $N$ (cf. \cite{Szego1928, BorweinErdelyi1995}). Writing $p = \|p\|_{C^0}\cos(\psi)$ with $\psi \bydef \arccos(p/\|p\|_{C^0}) \in [0,\pi]$, it gives $|\psi'| = |p'|/\sqrt{\|p\|_{C^0}^2 - p^2} \leq N$ wherever $|p| < \|p\|_{C^0}$, that is, except at finitely many points (unless $p$ is constant). Since $\psi$ is continuous and $\psi(x^*) = 0$, we get $\psi(x) \leq N|x-x^*|$, hence $p(x) \geq \|p\|_{C^0} \cos\left(N(x-x^*)\right)$ for all $|x-x^*| \leq \frac{\pi}{N}$, as $\cos$ is decreasing on $[0,\pi]$. Consequently, if $N_{\tt FFT} > 2N$, so that $\cos(\pi N/N_{\tt FFT}) > 0$, then the first inequality in \eqref{eq : MZ} holds with $C_\infty = 1/\cos(\pi N/N_{\tt FFT})$, a bound that goes back to~\cite{EhlichZeller1964}.

For $C_1$, we reconstruct $q$ from its values on the grid. If $V$ is a trigonometric polynomial such that $(\mathcal{F}(V))_n = 1$ for $|n| \leq N$ and $(\mathcal{F}(V))_n = 0$ for $|n| \geq N_{\tt FFT}-N$, then
\begin{equation*}
    q(y) = \frac{1}{N_{\tt FFT}}\sum_{j=0}^{N_{\tt FFT}-1} q(t_j) V(y-t_j) \quad \text{ for all } y \in [-\pi,\pi],
\end{equation*}
as one checks for $q(y) = e^{imy}$, $|m| \leq N$, using that $\sum_{j=0}^{N_{\tt FFT}-1} e^{int_j} = 0$ unless $n$ is a multiple of $N_{\tt FFT}$. Taking absolute values and integrating over $y$, we get that the second inequality in \eqref{eq : MZ} holds with $C_1 = \|V\|_{L^1}/(2\pi)$. To keep $\|V\|_{L^1}$ small, we let the Fourier coefficients of $V$ decrease linearly from $1$ to $0$ between $|n| = N$ and $|n| = N_{\tt FFT}-N$, which gives the de la Vall\'ee Poussin kernel (cf. \cite{Zygmund2002, DaiPrymakTemlyakovTikhonov2019}):
\begin{equation}\label{eq : de la Vallee Poussin}
    (\mathcal{F}(V))_n = \begin{cases}
    1, & |n| \leq N, \\
    \frac{N_{\tt FFT}-N-|n|}{N_{\tt FFT}-2N}, & N \leq |n| \leq N_{\tt FFT}-N, \\
    0, & |n| \geq N_{\tt FFT}-N.
    \end{cases}
\end{equation}
Combining the above, we obtain the following result.
\begin{lemma}\label{lem : sampling operator norm}
    Let $\mathbb{B}$ and $K$ be as in \eqref{eq : kernel formulation finite dim op}, let $N_{\tt FFT} > 2N$, let $t_k$, $k \in \{0, \dots, N_{\tt FFT} - 1\}$, be the grid points defined above and let $V$ be given by \eqref{eq : de la Vallee Poussin}. Then, we have that
    \begin{align}\label{eq : sampling operator norm}
        \|\mathbb{B}\Vert_{B(C^0)} \leq \frac{\|V\|_{L^1}}{N_{\tt FFT}\cos\left(\frac{\pi N}{N_{\tt FFT}}\right)} \max_{k \leq N_{\tt FFT}-1} \sum_{j = 0}^{N_{\tt FFT}-1} |K(t_k,t_j)|.
    \end{align}
\end{lemma}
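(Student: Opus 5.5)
The plan is to combine the Marcinkiewicz--Zygmund-type bound \eqref{eq : MZ bound} with the two explicit constants derived just above, namely $C_\infty = 1/\cos(\pi N/N_{\tt FFT})$ and $C_1 = \|V\|_{L^1}/(2\pi)$. Since $C_\infty C_1 \frac{2\pi}{N_{\tt FFT}} = \frac{\|V\|_{L^1}}{N_{\tt FFT}\cos(\pi N/N_{\tt FFT})}$, this gives exactly \eqref{eq : sampling operator norm}. The work is therefore to justify each ingredient carefully under the single assumption $N_{\tt FFT} > 2N$.

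\textbf{Step 1: reduce to a sup over grid points.} I start from \eqref{eq:opnorm_estimate} and fix $u$ with $\|u\|_{C^0}\le 1$. The function $q=\mathbb{B}u$ is a trigonometric polynomial of order $N$, since $\mathbb{B}=\pi^{\le N}\mathbb{B}\pi^{\le N}$. I then apply the Ehlich--Zeller argument: take a maximizer $x^*$ of $|q|$, rotate to get the real polynomial $p=\mathrm{Re}(e^{-i\phi}q)\le|q|$ with $p(x^*)=\|q\|_{C^0}$, and use the Bernstein--Szeg\H{o} inequality to get $\psi(x)\le N|x-x^*|$, hence $p(x)\ge \|q\|_{C^0}\cos(N(x-x^*))$ for $|x-x^*|\le \pi/N$. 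Because $N_{\tt FFT}>2N$, some grid point $t_k$ satisfies $|t_k-x^*|\le \pi/N_{\tt FFT}<\pi/(2N)$, and there $\cos>0$. This gives $\|q\|_{C^0}\le \cos(\pi N/N_{\tt FFT})^{-1}\max_k|q(t_k)|$. The one delicate point is the claim $\psi(x)\le N|x-x^*|$ when $p$ touches $\pm\|p\|$ at finitely many points; I handle it by the Lipschitz bound on $\psi$ away from a finite set together with continuity. The trivial case of a constant $p$ is immediate.

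\textbf{Step 2: bound each $|q(t_k)|$ by an $L^1$ norm.} Since $|q(t_k)|\le \int|K(t_k,y)|\,dy$, it suffices to bound the $L^1$ norm of the order-$N$ polynomial $y\mapsto K(t_k,y)$. I first verify the reproducing identity $q(y)=\frac1{N_{\tt FFT}}\sum_j q(t_j)V(y-t_j)$ on monomials $e^{imy}$, $|m|\le N$. Expanding $V$ gives $\frac1{N_{\tt FFT}}\sum_n \hat V_n e^{iny}\sum_j e^{i(m-n)t_j}$. The inner sum vanishes unless $n\equiv m \pmod{N_{\tt FFT}}$. The only admissible $n$ with $\hat V_n\neq0$ is $n=m$, because any other such $n$ satisfies $|n|\ge N_{\tt FFT}-N$; this again uses $N_{\tt FFT}>2N$, so that the supports $|n|<N_{\tt FFT}-N$ do not alias onto $|m|\le N$. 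I also need $\hat V_m=1$, which holds by construction. Integrating absolute values then yields $\|K(t_k,\cdot)\|_{L^1}\le \frac{\|V\|_{L^1}}{N_{\tt FFT}}\sum_j|K(t_k,t_j)|$.

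\textbf{Step 3: combine.} Chaining Steps 1 and 2 gives \eqref{eq : sampling operator norm}. I expect Step 1 to be the main obstacle, specifically the rigorous passage from Bernstein--Szeg\H{o} to the cosine lower bound. If that causes trouble, a fallback is to argue via the comparison that $\arccos(p/\|p\|)$ is absolutely continuous with derivative bounded by $N$ almost everywhere.
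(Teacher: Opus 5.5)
Your proposal is correct and follows the paper's argument. The paper also obtains $C_\infty = 1/\cos(\pi N/N_{\tt FFT})$ from the Bernstein--Szeg\H{o} (Ehlich--Zeller) argument on $p=\mathrm{Re}(e^{-i\phi}\mathbb{B}u)$, obtains $C_1=\|V\|_{L^1}/(2\pi)$ from the de la Vall\'ee Poussin reproducing formula applied to $K(t_k,\cdot)$, and combines the two through \eqref{eq : MZ bound}, handling the exceptional points where $|p|=\|p\|_{C^0}$ and the aliasing check in the same way you do.
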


\begin{proof}
    Since $N_{\tt FFT} > 2N$, the first inequality in \eqref{eq : MZ} holds with $C_\infty = 1/\cos(\pi N/N_{\tt FFT})$, and since $V$ given by \eqref{eq : de la Vallee Poussin} satisfies $(\mathcal{F}(V))_n = 1$ for $|n| \leq N$ and $(\mathcal{F}(V))_n = 0$ for $|n| \geq N_{\tt FFT}-N$, the second inequality in \eqref{eq : MZ} holds with $C_1 = \|V\|_{L^1}/(2\pi)$. Hence, \eqref{eq : sampling operator norm} follows from \eqref{eq : MZ bound}.
\end{proof}

In practice, the values $K(t_k,t_j)$ in \eqref{eq : sampling operator norm} are enclosed rigorously by applying the FFT, with interval arithmetic, to the rows and columns of $B$ (cf. \eqref{eq : kernel formulation finite dim op}). 
Moreover, the enclosure of $\|V\|_{L^1}$ is also obtained using the FFT on a fine grid. The right-hand side of \eqref{eq : sampling operator norm} is the factor $C_\infty C_1 = \frac{\|V\|_{L^1}}{2\pi\cos(\pi N/N_{\tt FFT})}$ times a discretization of \eqref{eq:opnorm_estimate}, which converges to $\|\mathbb{B}\|_{B(C^0)}$ as $N_{\tt FFT} \to \infty$. Hence, this factor measures the overestimation of $\|\mathbb{B}\|_{B(C^0)}$. It tends to $1$ as $\frac{N}{N_{\tt FFT}}$ tends to $0$. Indeed, comparing Fourier coefficients shows that $V$ is the following combination of Fej\'er kernels:
\begin{equation*}
    V = \frac{(N_{\tt FFT}-N)F_{N_{\tt FFT}-N} - NF_N}{N_{\tt FFT}-2N}, \quad \text{ where } F_m(t) \bydef \sum_{|n|<m}\left(1-\frac{|n|}{m}\right)e^{int}.
\end{equation*}
Since $F_m \geq 0$ and $\int_{-\pi}^{\pi}F_m(t)dt = 2\pi$, the triangle inequality gives $\frac{\|V\|_{L^1}}{2\pi} \leq \frac{(N_{\tt FFT}-N)+N}{N_{\tt FFT}-2N} = \frac{N_{\tt FFT}}{N_{\tt FFT}-2N}$. For the values of $N$ and $N_{\tt FFT}$ used in the applications in this paper, the factor $C_\infty C_1$ lies between $1.06$ and $1.46$.

\subsection{Computer-assisted proof of existence}\label{sec : CAP forcing}
We now combine the previous analysis in order to prove the existence of a periodic solution to \eqref{eq : toy model} in a vicinity of the approximate solution represented in Figure~\ref{fig : toy model}.

\begin{theorem}\label{th : CAP forcing}
    Let $\varepsilon_1 = 0.5$, $\varepsilon_2 = 2$ and let $\bx = (\bar{\beta},\bu)$ be the approximate solution represented in Figure~\ref{fig : toy model}. Then there exists an infinitely differentiable $2\pi$-periodic solution $\tilde{u}$ to \eqref{eq : toy model} such that
    \begin{align*}
        \|\tilde{u} - \bu\|_{C^1} \leq r_0 \bydef 2.06 \times 10^{-7}.
    \end{align*}
\end{theorem}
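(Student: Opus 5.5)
The plan is to apply Theorem~\ref{thm:radPolyBanach} to the map $\mathbb{F} : H \to H$ of \eqref{eq:the_map}, with $\varepsilon_1 = 0.5$, $\varepsilon_2 = 2$, at the numerical approximation $\bx = (\bar\beta,\bu) \in H^{\le N}$ with $N = 150$, and with $\mathbb{A} = \mathbb{A}^{\le N} + \pi^{>N}$ as in \eqref{eq:general_A}. Here $\mathbb{A}^{\le N}$ is a numerically computed inverse of the Galerkin matrix $\Pi^{\le N} DF(\bar X)\Pi^{\le N}$, and we replace it by $\mathrm{Re}\circ\mathbb{A}$. Lemma~\ref{lem : zeros of F are solutions} gives Fréchet differentiability. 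Once the radii polynomial inequalities hold at some $r_0$, the unique zero $\tilde x = (\tilde\beta,\tilde u)$ satisfies $\|\tilde u - \bu\|_{C^1} \le \|\tilde x - \bx\|_H \le r_0$, and again by Lemma~\ref{lem : zeros of F are solutions} $\tilde u$ is a smooth $2\pi$-periodic solution of \eqref{eq : toy model}.

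Injectivity of $\mathbb{A}$ is checked separately. It follows from $Z_1<1$, since then $\mathbb{A}D\mathbb{F}(\bx)$ is invertible; alternatively, it follows from invertibility of the finite matrix $A^{\le N}$ together with the identity on the tail.

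The bounds are computed in order, all with interval arithmetic.

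\textbf{The $Y$ bound.} We enclose $F(\bar X)$. The only non-polynomial term is $v(t) = \bu(t-1-\varepsilon_2\bu(t))$. Its coefficients with $|n| \le N_{\tt tail}$ are obtained by the interval FFT of Section~\ref{sec:finite_coefficients} with the aliasing error \eqref{eq:aliasing_control}, and the remaining ones from the geometric bound \eqref{eq : tail estimates}, using a suitable $\bar\rho$ and the constant \eqref{eq:asymptotic_constant}. We then apply $A$ and use Lemma~\ref{lem : Y bound toy model}, treating the tail sums explicitly as indicated after that lemma.

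\textbf{The $Z_2$ bound.} This comes from Lemma~\ref{lem : Z2 forcing}. It requires a bound on $\|\bu''\|_{C^0}$, which is easy through $\sum n^2|\bU_n|$, and the norms $\|\mathbb{A}\mathbb{D}^\dagger\|_{B(H_0)}$ and $\|\mathbb{D}\mathbb{A}\mathbb{D}^\dagger\|_{B(H_0)}$. These are reduced by Lemma~\ref{lem : from infinite to finite} to finite-dimensional norms, which are estimated by Lemma~\ref{lem : sampling operator norm}. The $\R$ component of $H_0$ is handled by a direct row/column bound.

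\textbf{The $Z_1$ bound.} We use Lemmas~\ref{lem : Z1 forcing} and \ref{lem : Z1_12 forcing} with a rigorous enclosure of $\theta_N$ from Proposition~\ref{prop : computation Kolmogorov constant}; in particular $\theta_N \le 1$ holds. The term $Z_{1,0}$ requires the Galerkin matrix $\Pi^{\le N} DF(\bar X)\Pi^{\le N}$. Its $L_2$ part is a Toeplitz matrix built from the coefficients of $\bu'(t-1-\varepsilon_2\bu)$. Its $L_3$ part, given by \eqref{def : operator L3}, is obtained through $2N+1$ interval FFTs of the complex functions $e^{ik(t-1-\varepsilon_2\bu(t))}$, each with its own aliasing control. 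We then form $\Pi^{\le N} - A^{\le N}\Pi^{\le N}DF(\bar X)\Pi^{\le N}$, compose it with $\mathbb{D}^\dagger\pi^{\le N}$ or $\mathbb{D}$, apply Lemma~\ref{lem : from infinite to finite}, and finally bound the resulting kernels via Lemma~\ref{lem : sampling operator norm} with $N_{\tt FFT}$ a few times larger than $N$.

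Finally, we choose $r_* $ (here $Z_2$ is independent of $r$) and take $r_0$ as the smallest root of $\frac12 Z_2 r^2 - (1-Z_1)r + Y$. We then verify both inequalities of Theorem~\ref{thm:radPolyBanach} in interval arithmetic, expecting $r_0 \approx 2.06\times10^{-7}$.

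The main obstacle is obtaining $Z_1 < 1$. Because $\varepsilon_2 = 2$ is large, $\|\bu'\|_{C^0}$ and $\|\bu''\|_{C^0}$ inflate $Z_{1,2}$. That term is proportional to $\theta_N \sim \frac{4}{\pi^2}\frac{\ln N}{N}$, so $N$ must be large enough, and $N=150$ is hopefully sufficient. The term $Z_{1,0}$ must also stay small even though every matrix entry is an enclosure that carries aliasing errors. If $Z_1$ is too large, the first remedies would be to increase $N$ (with $N_{\tt FFT}$ and $N_{\tt tail}$), to refine $\bar\rho$, and to recompute $\mathbb{A}^{\le N}$ in higher precision.
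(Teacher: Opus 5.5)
Your outline follows the paper's proof in structure: the same $\mathbb{F}$, the same $\mathbb{A}=\mathbb{A}^{\le N}+\pi^{>N}$, the same lemmas for $Y$, $Z_1$, $Z_2$, and the same use of $\theta_N$ and Lemma~\ref{lem : sampling operator norm}. However, the concrete choice $N=150$ is the step that would fail, because it ties the truncation of $\mathbb{A}$ to the order of $\bar u$.

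In this $C^1$ setting the tail part of $I-\mathbb{A}D\mathbb{F}(\bar x)$ is not made small by the decay of $\bar U$. The terms $Z_{1,1}$ and $Z_{1,2}$ are proportional to $\theta_N$, which is the \emph{optimal} constant of Lemma~\ref{lem:piNC0C1} and decays only like $\frac{4}{\pi^2}\frac{\ln N}{N}$.

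The paper keeps $\bar u$ of order $150$ but takes $N=660$, with $N_{\tt FFT}=2^{12}$ and $\bar\nu=1.1$, and obtains $Y\le 1.21\times 10^{-7}$, $Z_1\le 0.410$, $Z_2\le 158.9$. The remaining piece $Z_{1,0}$ only measures how well $\mathbb{A}^{\le N}$ inverts the Galerkin matrix, so it is presumably tiny and most of the $0.410$ comes from the $\theta_N$-terms. Going back to $N=150$ multiplies those terms by roughly $\theta_{150}/\theta_{660}\approx 3.4$ (by the leading asymptotics), so you should expect $Z_1>1$.

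Your fallback ``increase $N$'' is therefore mandatory, and it must be done by decoupling $N$ from the order of $\bar u$, as the paper remarks at the end of its proof. Raising the order of $\bar u$ together with $N$ is not a good substitute. As the paper notes right after the proof, the constant $C$ in \eqref{eq : tail estimates} grows rapidly with the order of $\bar u$, because the solution appears to be nonanalytic.

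A smaller point concerns injectivity. $Z_1<1$ makes $\mathbb{A}D\mathbb{F}(\bar x)$ invertible, but by itself this only shows that $\mathbb{A}$ is surjective, not injective. The paper completes the argument as follows: $\pi^{\le N}\mathbb{A}=\mathbb{A}^{\le N}$ is then surjective, hence bijective, on the finite-dimensional space $H^{\le N}$. Therefore $\mathbb{A}=\mathbb{A}^{\le N}+\pi^{>N}$ is injective. Your ``alternative'' route relies on this same fact, but the invertibility of $A^{\le N}$ must itself be certified rigorously, and this argument is what certifies it.
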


\begin{proof}
    In practice,  $\bu$ is chosen as a trigonometric polynomial of order $150$ and we choose the truncation size $N = 660$. Next, we construct $\mathbb{A} = \mathbb{A}^{\leq N} + \pi^{>N}$ as described in Section \ref{sec : approximate objects and projections}. Then, choosing $N_{\tt FFT} = 2^{12}$ and $\bar{\nu} = 1.1$, we compute the bounds $Y$, $Z_1$ and $Z_2$ of Lemmas \ref{lem : Y bound toy model}, \ref{lem : Z1 forcing} and \ref{lem : Z2 forcing}, respectively, using Lemmas \ref{lem : Z1_12 forcing}, \ref{lem : from infinite to finite} and \ref{lem : sampling operator norm}, Proposition \ref{prop : computation Kolmogorov constant} for $\theta_N$, and interval arithmetic \cite{Ru99a}. We obtain
    \begin{align*}
        Y \leq 1.21 \times 10^{-7}, \quad Z_1 \leq 0.410 \quad \text{ and } \quad Z_2 \leq 158.9.
    \end{align*}
    Since $\hat{Z}_2(r)$ in Lemma \ref{lem : Z2 forcing} does not depend on $r$, \eqref{eq:Z2_radPolyBanach} holds for any $r_* > 0$. Moreover, $Y + Z_1 r_0 + \frac{1}{2}Z_2 r_0^2 \leq r_0$ and $Z_1 + Z_2 r_0 < 1$. Since $Z_1<1$, $\mathbb{A}D\mathbb{F}(\bx)$ is invertible, so that $\mathbb{A}$ is surjective. Since $\pi^{\le N}\mathbb{A} = \mathbb{A}^{\le N}$, this implies that $\mathbb{A}^{\le N}$ is surjective, hence bijective, on the finite-dimensional space $H^{\le N}$, and therefore $\mathbb{A} = \mathbb{A}^{\le N} + \pi^{>N}$ is injective. Therefore, Theorem \ref{thm:radPolyBanach} provides a zero $\tilde{x} = (\tilde{\beta},\tilde{u})$ of $\mathbb{F}$ such that $\|\tilde{x} - \bx\|_H \leq r_0$. We conclude using Lemma \ref{lem : zeros of F are solutions} and $\|\tilde{u} - \bu\|_{C^1} \leq \|\tilde{x} - \bx\|_H$. We note that the truncation size $N$ is much larger than the order of $\bu$. This is because the bounds $Z_{1,1}$ and $Z_{1,2}$ are proportional to $\theta_N$, which only decays like $\frac{4}{\pi^2}\frac{\ln N}{N}$, and which is the optimal constant in Lemma~\ref{lem:piNC0C1}. The code performing the computer-assisted proofs is available at \cite{code_proof}.
\end{proof}

We point out that the Fourier coefficients of $\bu$ do not exhibit a geometric decay: after a fast initial decrease, their decay slows down markedly (see Figure~\ref{fig : toy model}, right). This suggests that the periodic solution $\tilde{u}$ is not analytic, in contrast with the threshold model of Section~\ref{sec:threshold_SDDE} (compare with Figure~\ref{fig : threshold}, right; see also \cite{MR3229655} for examples of nonanalytic solutions of delay-differential equations). Theorem~\ref{th : CAP forcing} nevertheless applies, since our approach is formulated in the space $C^1$ and does not require the analyticity of the solution, but only that of the approximate solution $\bu$, which is a trigonometric polynomial. In practice, however, as the order of $\bu$ increases, the constant $C$ in the tail estimate \eqref{eq : tail estimates} grows rapidly, which makes the proof increasingly difficult. This is a consequence of the apparent nonanalyticity of the solution: on one of the lines $\mathrm{Im}(z) = \pm\bar\rho$, the $n$-th mode of $\bu$ is amplified by a factor $\bar\nu^{|n|}$, which is no longer compensated by the decay of the coefficients. We come back to this issue in Section~\ref{sec:conclusion}.

\section{An autonomous SDDE with two state-dependent delays} \label{sec : autonomous SDDE} 

We now turn to an autonomous problem, namely the scalar SDDE with two state-dependent delays
\begin{equation}\label{eq : autonomous SDDE}
    u'(t) = - \gamma u(t) - \kappa_1 u (\alpha_1(t,u(t))) - \kappa_2 u(\alpha_2(t,u(t))), \text{ where } \alpha_i(t,u(t)) = t - a_i - c_i u(t)
\end{equation}
introduced in \cite{HumphriesDeMasi2012} and studied in detail in \cite{humphries_2017_resonance}. Here $\gamma>0$ is a decay rate, $\kappa_1,\kappa_2\ge 0$ are feedback strengths, and the delays $a_i + c_i u(t)$, with $a_i,c_i>0$, depend linearly on the state. For $\kappa_2=0$, \eqref{eq : autonomous SDDE} reduces to the single-delay equation introduced by Mallet-Paret and Nussbaum as a prototypical SDDE (see \cite{humphries_2017_resonance} and the references therein). Moreover, for $c_1=c_2=0$, \eqref{eq : autonomous SDDE} is linear, so that its nonlinear dynamics is entirely generated by the state-dependence of the delays. Following \cite{humphries_2017_resonance}, we fix
\begin{equation*}
    \gamma = 4.75, ~ a_1 = 1.3, ~ a_2 = 6, ~ c_1 = c_2 = 1, ~ \kappa_2 = 3,
\end{equation*}
and consider $\kappa_1$ as a bifurcation parameter. Since $\gamma>\kappa_2$, the delays cannot become advanced along solutions \cite{humphries_2017_resonance}. As $\kappa_1$ increases, the zero solution loses stability in a Hopf bifurcation at $\kappa_1\approx 3.206$, and the bifurcating periodic orbits remain stable until a torus bifurcation at $\kappa_1\approx 3.656$, beyond which invariant tori, resonance tongues and more intricate dynamics arise \cite{humphries_2017_resonance}. In Theorem~\ref{th : CAP autonomous}, we prove the existence of a periodic orbit on this branch, for $\kappa_1 = 3.4$.

Compared with Section~\ref{sec:forced_toy_model}, the main new difficulty is that periodic solutions of \eqref{eq : autonomous SDDE} do not have an a priori prescribed period. The period therefore becomes an unknown, which is solved for simultaneously with the periodic orbit, and a phase condition is added to remove the invariance under time translations.

Given $\omega > 0$, we look for $2\pi\omega$-periodic solutions to \eqref{eq : autonomous SDDE}. Using the change of variable $t \to \frac{1}{\omega} t$, we rescale \eqref{eq : autonomous SDDE} and redefine $\alpha_i$ as 
\begin{align}\label{eq : time rescaling}
    u'(t) = -  \omega \gamma u(t) - \omega \kappa_1 u (\alpha_1(\omega,t,u(t))) - \omega \kappa_2 u(\alpha_2(\omega,t,u(t))) \text{ where } \alpha_i(\omega, t,u(t)) = t - \frac{a_i}{\omega}  - \frac{c_i}{\omega} u(t).
\end{align}
Moreover, we use the rescaling $u \to \omega u$ to get
\begin{align} \label{eq : autonomous SDDE rescaled}
    u'(t) = -  \omega \gamma u(t) - \omega \kappa_1 u (\alpha_1(\omega,t,u(t))) - \omega \kappa_2 u(\alpha_2(\omega,t,u(t))), 
\end{align}
where we now redefine $\alpha_i$ as 
\[
\alpha_i(\omega, t,u(t)) = t - \frac{a_i}{\omega}  - c_i u(t),
\]
and where we note that this rescaling allows us to decouple the state-dependency and the period. Using notation similar to that of \eqref{eq : toy equation with g}, we introduce $\mathbb{G} : \R^3 \to \R$ as 
\begin{align}\label{def : G autonomous example}
    \mathbb{G}( x, y, z) = - \gamma z -  \kappa_1 x -  \kappa_2 y.
\end{align}
Then, we obtain that \eqref{eq : autonomous SDDE rescaled} is equivalent to
\begin{align}
    u'(t) = \omega \mathbb{G}(u (\alpha_1(\omega,t,u(t))),u (\alpha_2(\omega,t,u(t))),u(t)).
\end{align}
Moreover, in order to simplify notation, as in \eqref{eq : toy equation with g}, we denote by $g : (0,\infty) \times C^1_{per}([-\pi,\pi]) \to C^1_{per}([-\pi,\pi])$ the following map
\begin{align}\label{eq : g autonomous}
    g(\omega,u)(t) = \omega \mathbb{G}(u (\alpha_1(\omega,t,u(t))),u (\alpha_2(\omega,t,u(t))),u(t)).
\end{align}
Now, since we need to solve for the period, we  add a phase condition allowing us to fix the natural translation invariance of the periodic orbit. For this purpose, assuming that the periodic solution oscillates around a point $u_0$, we impose the following condition
\begin{align}
    (\pi^{\leq N}u)(0) = u_0.
\end{align}
Note that in practice, one can choose a different condition which might be better suited or better conditioned for the problem at hand.

Now, in order to follow the notations of \eqref{eq:the_map}, we let $\beta_1 = \omega$ and let $\beta_2 \in \R$ be the value of $u$ at $-\pi$,  as introduced in Lemma \ref{lem : zeros of F are solutions}. Then, in the rest of the section, we denote $x = (\beta, u) \in \R^2 \times C^1_{per}([-\pi,\pi])$ where $\beta = (\beta_1,\beta_2)$.
Let $\xi : (0,\infty) \times \R \times C^1_{per}([-\pi,\pi]) \to \R^2$ be defined as 
\begin{align*}
    \xi(x) = \begin{pmatrix}
        \xi_1(x)\\
        \xi_2(x) 
    \end{pmatrix}
    = \begin{pmatrix}
        (\pi^{\leq N}u)(0) - u_0 \\
        \mathbb{M} g(\beta_1,u) 
    \end{pmatrix},
\end{align*}
and let $f : (0,\infty) \times \R \times C^1_{per}([-\pi,\pi]) \to C^1_{per}([-\pi,\pi])$ be defined as 
\begin{align*}
   f(x) =  u - \beta_2 - \mathbb{L}_1g(\beta_1,u).
\end{align*}
We consider $H = \R^2 \times C^1_{per}([-\pi,\pi])$, $H_0 = \R^2 \times C^0_{per}([-\pi,\pi])$ and the following map $\mathbb{F}$, with values in $H$,
\begin{align}\label{def : zero finding F autonomous}
    \mathbb{F}(x) \bydef \begin{pmatrix}
        \xi(x)\\
        f(x) 
    \end{pmatrix} = \begin{pmatrix}
        (\pi^{\leq N}u)(0) - u_0\\
        \mathbb{M} g(\beta_1,u) \\
        u - \beta_2 - \mathbb{L}_1g(\beta_1,u)
    \end{pmatrix}
\end{align}
for all $x = (\beta_1, \beta_2, u) \in H$ with $\beta_1 > 0$. We prove in the following that zeros of $\mathbb{F}$ in $H$ provide smooth periodic solutions to \eqref{eq : autonomous SDDE}.
\begin{lemma}\label{lem : zeros of F autonomous}
    Let $x = (\beta_1, \beta_2, u) \in H$ be a zero of~$\mathbb{F}$, given in \eqref{def : zero finding F autonomous}. Then, $v : t \mapsto \beta_1 u\left(\frac{t}{\beta_1}\right)$ is an infinitely differentiable $2\pi\beta_1$-periodic solution to \eqref{eq : autonomous SDDE}. Moreover, $v\left(-\pi\beta_1\right) = \beta_1 \beta_2$.
\end{lemma}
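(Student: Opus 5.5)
The plan follows the proof of Lemma~\ref{lem : zeros of F are solutions}, and then undoes the two rescalings. We first read off what $\mathbb{F}(x)=0$ gives. The third component gives
\[
u(t) = \beta_2 + \int_{-\pi}^t \big(g(\beta_1,u)(s) - \mathbb{M}g(\beta_1,u)\big)\,ds .
\]
The second component gives $\mathbb{M}g(\beta_1,u)=0$. Together these yield $u(-\pi)=\beta_2$ and $u'(t) = g(\beta_1,u)(t)$ for all $t$. Here $u$ is $2\pi$-periodic, and $g(\beta_1,u)$ is continuous and $2\pi$-periodic because $\alpha_i(\beta_1,t+2\pi,u(t+2\pi)) = \alpha_i(\beta_1,t,u(t))+2\pi$. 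The phase condition, the first component, plays no role in the conclusion; it only removes translation invariance.

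Next we undo the rescalings. Set $\omega=\beta_1>0$ and $v(t) = \omega u(t/\omega)$. Then $v$ is $2\pi\omega$-periodic and $v(-\pi\omega) = \omega u(-\pi) = \omega\beta_2$. We differentiate, using \eqref{eq : autonomous SDDE rescaled}:
\[
v'(t) = u'(t/\omega) = -\omega\gamma u(t/\omega) - \sum_{i=1}^2 \omega\kappa_i\, u\!\left(\tfrac{t}{\omega} - \tfrac{a_i}{\omega} - c_i u(t/\omega)\right).
\]
Now write $u(s) = v(\omega s)/\omega$. The first term becomes $-\gamma v(t)$. The argument of the delayed term becomes
\[
\omega\left(\tfrac{t}{\omega} - \tfrac{a_i}{\omega} - \tfrac{c_i}{\omega}v(t)\right) = t - a_i - c_i v(t) = \alpha_i(t,v(t)),
\]
so the $i$-th delayed term becomes $-\kappa_i v(\alpha_i(t,v(t)))$. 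This is exactly \eqref{eq : autonomous SDDE}. The only care needed here is the bookkeeping of the factors $\omega$, since both the time and the amplitude scalings must be inverted consistently.

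For smoothness we bootstrap, as in Lemma~\ref{lem : zeros of F are solutions}. Suppose $u\in C^k$ with $k\ge1$. The arguments $s\mapsto s - a_i/\omega - c_i u(s)$ are then $C^k$, so $u\circ\alpha_i$ is $C^k$, and hence $g(\beta_1,u)\in C^k$. Since $u'=g(\beta_1,u)$, it follows that $u\in C^{k+1}$. Induction gives $u\in C^\infty$, and therefore $v\in C^\infty$.

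The main, though mild, obstacle is this bootstrap for the composition term $u(\alpha_i(s))$. The chain rule must apply at every step, which it does because $u$ is defined and $C^k$ on all of $\R$ by periodicity. Everything else is a direct computation.
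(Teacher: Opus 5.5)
Your proposal is correct and is essentially the paper's proof. The paper simply states that the result follows from Lemma~\ref{lem : zeros of F are solutions} combined with inverting the time and amplitude rescalings \eqref{eq : time rescaling} and \eqref{eq : autonomous SDDE rescaled}; you write out exactly that argument, including the $\omega$-bookkeeping, the periodicity of $g(\beta_1,u)$, and the smoothness bootstrap that the paper leaves implicit.
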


\begin{proof}
    The proof follows from Lemma \ref{lem : zeros of F are solutions}, combined with the time and amplitude rescalings~\eqref{eq : time rescaling} and~\eqref{eq : autonomous SDDE rescaled}.
\end{proof}

\subsection{Computation of bounds}

This section heavily relies on the analysis derived in Section \ref{sec:bounds_toy_model}. We again assume that we have access to an approximate solution $\bar{x} \in H$ such that
\begin{align}\label{eq : approx solution sec 3}
    \bar{x} = (\bar{\beta}, \bar{u}) = (\bar{\beta}_1, \bar{\beta}_2, \bar{u}) \in H \text{ and } \bar{U} \bydef \mathcal{F}(\bar{u})  \text{ with } \bar{u} = \pi^{\leq N} \bar{u}.
\end{align}
We also denote $\bar{X} = (\bar{\beta}, \bar{U})$, the Fourier coefficients representation.
The approximate solution $\bar{u}$ obtained for $\kappa_1 = 3.4$ is represented in Figure~\ref{fig : autonomous}, together with its Fourier coefficients.
\begin{figure}[ht]
\centering
\includegraphics[width=1\textwidth]{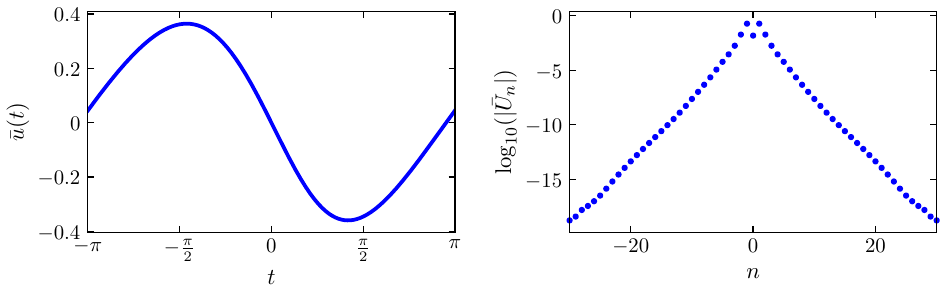}
\caption{Left: approximate solution $\bar{u}$ of \eqref{eq : autonomous SDDE rescaled} for $\gamma = 4.75$, $a_1 = 1.3$, $a_2 = 6$, $c_1 = c_2 = 1$, $\kappa_1 = 3.4$ and $\kappa_2 = 3$, given as a trigonometric polynomial of order $30$. In particular, $\bar{\beta}_1 \approx 0.4104$, so that the corresponding periodic solution of \eqref{eq : autonomous SDDE} has period $2\pi\bar{\beta}_1 \approx 2.578$. Right: $\log_{10}$ of the absolute value of its Fourier coefficients $\bar{U}_n$.}
\label{fig : autonomous}
\end{figure}
Moreover, as in Section \ref{sec : approximate objects and projections}, we construct an approximate inverse $\mathbb{A} = \mathbb{A}^{\leq N} + \pi^{>N}$ for the Jacobian $D\mathbb{F}(\bar{x})$ using its Fourier coefficients representation $A = A^{\leq N} + \Pi^{>N}.$

Using \eqref{eq : approx solution sec 3}, the computation of the bound $Y$ follows the same analysis as the one presented in Section \ref{sec:bounds_toy_model}. Specifically, $g(\bar{\beta}_1, \bar{u})$, defined in \eqref{eq : g autonomous}, possesses the structure described in Section \ref{sec:fourier_coeff_delayed_term}, hence we can rigorously enclose its Fourier coefficients.  The computation of the bound $Y$ then follows from Lemma \ref{lem : Y bound toy model}.

\subsubsection{Computation of \boldmath$Z_1$\unboldmath}

In Lemma \ref{lem : Z1 forcing}, we derived a general formula for the computation of the bound $Z_1.$ In particular, the aforementioned lemma requires the computation of the partial derivatives of $g$ evaluated at  $\bar{x}$. This is what we achieve in the following.

Writing $\alpha_j(\beta_1,u)(t) \bydef \alpha_j(\beta_1,t,u(t))$, let $\bar{v}_i$ ($i \in \{1,2,3,4\}$) be defined as 
\begin{align}
    \bar{v}_1 = \bar{u}(\alpha_1(\bar{\beta}_1,\bar{u})), ~ 
    \bar{v}_2 = \bar{u}(\alpha_2(\bar{\beta}_1,\bar{u})), ~
    \bar{v}_3 = \bar{u}'(\alpha_1(\bar{\beta}_1,\bar{u})) \text{ and }
    \bar{v}_4 = \bar{u}'(\alpha_2(\bar{\beta}_1,\bar{u})).
\end{align}

Using \eqref{eq : g autonomous}, we have
\begin{equation}\label{eq : partials of g autonomous}
    \begin{aligned}
   \partial_{\beta_1}g(\bar{x}) &= 
        \mathbb{G}(\bar{v}_1,\bar{v}_2, \bar{u}) -\frac{\kappa_1 a_1}{\bar{\beta}_1} \bar{v}_3    -\frac{\kappa_2 a_2}{\bar{\beta}_1} \bar{v}_4 \\
        \partial_{\beta_2}g(\bar{x}) &= 
        0\\
        \partial_{u} g(\bar{x}) h &= - \bar{\beta}_1 \gamma h -  \bar{\beta}_1 \kappa_1\left( h(\alpha_1(\bar{\beta}_1,\bar{u})) - c_1 \bar{v}_3 h \right) -\bar{\beta}_1 \kappa_2\left( h(\alpha_2(\bar{\beta}_1,\bar{u})) - c_2 \bar{v}_4 h \right)
\end{aligned}
\end{equation}
for all $h \in C^1_{per}([-\pi,\pi]).$ Now, let us look at the structure of the Fr\'echet derivative:
\begin{align}
    D\mathbb{F}(\bx) = \begin{pmatrix} 
0 & 0 & (\pi^{\leq N} \cdot )(0)\\
\mathbb{M}\partial_{\beta_1} {g}(\bx) & 0 & \mathbb{M}\partial_u {g}(\bx)
\vspace{.1cm} \\ 
\displaystyle 
-\mathbb{L}_1\partial_{\beta_1} {g}(\bx) & -1  & I -   \mathbb{L}_1\partial_u g(\bx) 
\end{pmatrix}.
\end{align}
In our analysis, we use the following decomposition
\begin{align}\label{eq : decomposition DF with DQ sec 3}
    D\mathbb{F}(\bx) = \mathbb{L} + \mathbb{D}^\dagger D\mathbb{Q}(\bx)
\end{align}
where 
\begin{align*}
\mathbb{Q}(x) = \begin{pmatrix}
-\beta_1 + (\pi^{\leq N}u)(0) - u_0\\
    -\beta_2 + \mathbb{M} g(x)\\
    - g(x)
\end{pmatrix}, ~~    D\mathbb{Q}(\bx) = \begin{pmatrix} 
-1 & 0 & (\pi^{\leq N} \cdot )(0)\\
\mathbb{M}\partial_{\beta_1} {g}(\bx) & -1 & \mathbb{M}\partial_u {g}(\bx)
\vspace{.1cm} \\ 
\displaystyle 
-\partial_{\beta_1} {g}(\bx) & 0 &  -   \partial_u g(\bx) 
\end{pmatrix} \text{ and } \mathbb{L} = \begin{pmatrix} 
1 & 0 & 0 \\ 
0 & 1 & 0\\
\displaystyle 
0 & -1 &  I
\end{pmatrix}.
\end{align*}
The above explicit formulas allow us to compute the finite-dimensional bounds involved in $Z_{1,0}$ (cf. Lemmas \ref{lem : Z1 forcing} and \ref{lem : from infinite to finite}). In contrast with Section \ref{sec:forced_toy_model}, the map $g$ now also depends on $\beta_1$, so that the third component of $D\mathbb{Q}(\bar{x})x$ reads $-\partial_{\beta_1}g(\bar{x})h_{0,1} - \partial_u g(\bar{x})h$ for $x = (h_{0,1},h_{0,2},h) \in H$. Since $\partial_{\beta_1} g(\bar{x})$ is a fixed function, its contribution is estimated directly using $\|\mathcal{F}^{-1}(U)\|_{C^0} \leq \|U\|_{\ell^1}$, rather than through \eqref{def : Kolmogorov constant}. Repeating verbatim the proof of Lemma \ref{lem : Z1 forcing}, the conclusion of that lemma therefore holds with $Z_{1,2}$ satisfying
\begin{equation}\label{eq : Z12 autonomous}
    Z_{1,2} \geq \left\|\Pi^{>N} \mathcal{F}\left(\partial_{\beta_1} g(\bar{x})\right)\right\|_{\ell^1} + \sup_{\|h\|_{C^1}=1} \|\partial_t[\partial_u g(\bar{x})h]\Vert_{C^0} \, \theta_N,
\end{equation}
the bound $Z_{1,1}$ being unchanged since $\pi^{>N}x$ has a vanishing $\R^2$ component and the phase condition vanishes on $\pi^{>N}x$. The Fourier coefficients of $\partial_{\beta_1} g(\bar{x})$ are enclosed using the analysis of Section \ref{sec:fourier_coeff_delayed_term}, since $\bar{v}_1, \dots, \bar{v}_4$ are all of the form \eqref{eq:composition_term}. Consequently, we require explicit upper bounds for $\|\partial_u g(\bar{x})\Vert_{B(C^0)}$ and $\sup_{\|h\|_{C^1}=1}  \|\partial_t[\partial_u g(\bar{x})h]\Vert_{C^0}$. This is achieved in the next lemma.
\begin{lemma}\label{lem : Z1 autonomous}
Denoting $\bar{v}_{2+j} = \bar{u}'(\alpha_j(\bar{\beta}_1,\bar{u}))$ for $j \in \{1,2\}$, we have the following inequalities
\begin{align*}
     \|\partial_u g(\bar{x})\Vert_{B(C^0)} &\leq |\bar{\beta}_1\gamma| + \sum_{j=1}^2 |\bar{\beta}_1 \kappa_j|\left(1 + |c_j|\|\bar{v}_{2+j}\|_{C^0}\right),\\
      \sup_{\|h\|_{C^1}=1}  \|\partial_t[\partial_u g(\bar{x})h]\Vert_{C^0} &\leq |\bar{\beta}_1\gamma| + \sum_{j=1}^2 |\bar{\beta}_1 \kappa_j|\left(\|1 - c_j\bar{u}'\|_{C^0} + |c_j|\left(\|\bar{v}_{2+j}\|_{C^0} + \|\bar{v}_{2+j}'\|_{C^0}\right)\right).
    \end{align*}
\end{lemma}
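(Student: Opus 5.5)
The plan is to mirror the proof of Lemma~\ref{lem : Z1_12 forcing}, starting from the explicit formula \eqref{eq : partials of g autonomous} for $\partial_u g(\bar{x})h$. Throughout, write $\bar\alpha_j \bydef \alpha_j(\bar\beta_1,\bar u)$, so that $\bar\alpha_j(t) = t - a_j/\bar\beta_1 - c_j\bar u(t)$. The key elementary fact is that composition with $\bar\alpha_j$ does not increase sup norms: for any continuous $2\pi$-periodic $w$, $\|w(\bar\alpha_j)\|_{C^0} \le \|w\|_{C^0}$. This holds because $\bar\alpha_j(t)$ is real and $w$ is periodic.

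For the first inequality, take $h \in C^0_{per}$ with $\|h\|_{C^0}\le 1$. Apply the triangle inequality term by term in \eqref{eq : partials of g autonomous}, which is valid for continuous $h$ as well:
\[
|\partial_u g(\bar x)h| \le |\bar\beta_1\gamma|\,|h| + \sum_{j=1}^2 |\bar\beta_1\kappa_j|\left(|h(\bar\alpha_j)| + |c_j|\,|\bar v_{2+j}|\,|h|\right).
\]
Each $h$-factor is bounded by $\|h\|_{C^0}\le 1$, using the composition fact for $h(\bar\alpha_j)$. The remaining $\bar v_{2+j}$ factors are bounded by $\|\bar v_{2+j}\|_{C^0}$, and this gives the stated bound.

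For the second inequality, take $h\in C^1_{per}$ with $\|h\|_{C^1}=1$ and differentiate the same formula in $t$. Since $\bar\alpha_j' = 1 - c_j\bar u'$, the chain and product rules give
\[
\partial_t[\partial_u g(\bar x)h] = -\bar\beta_1\gamma h' - \sum_{j=1}^2 \bar\beta_1\kappa_j\left( (1-c_j\bar u')\,h'(\bar\alpha_j) - c_j\bar v_{2+j}' h - c_j \bar v_{2+j} h'\right).
\]
Here $\bar v_{2+j}'$ denotes the derivative in $t$ of the composed function $t\mapsto \bar u'(\bar\alpha_j(t))$, which is exactly the quantity appearing in the statement. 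To finish, bound $|h'(\bar\alpha_j)|\le\|h'\|_{C^0}\le 1$ and $|h|,|h'|\le 1$, then take suprema. This yields $|\bar\beta_1\gamma| + \sum_j|\bar\beta_1\kappa_j|\left(\|1-c_j\bar u'\|_{C^0} + |c_j|(\|\bar v_{2+j}'\|_{C^0}+\|\bar v_{2+j}\|_{C^0})\right)$, as claimed.

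There is no real obstacle, since the argument is the same as in Lemma~\ref{lem : Z1_12 forcing}. The one point needing care is the form of the derivative term. Keeping $\bar v_{2+j}'$ as the derivative of the composite function, rather than expanding it as $(1-c_j\bar u')\bar u''(\bar\alpha_j)$, is what matches the statement. It also keeps the bound computable, because $\bar v_{2+j}'$ can be enclosed directly through its Fourier coefficients via Section~\ref{sec:fourier_coeff_delayed_term}.
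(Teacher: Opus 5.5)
Your proof is correct and matches the paper's argument. You get the first bound from the explicit formula for $\partial_u g(\bar x)h$, using that composition with $\alpha_j(\bar\beta_1,\bar u)$ does not increase the sup norm, and the second by differentiating with $\partial_t\alpha_j(\bar\beta_1,\bar u) = 1 - c_j\bar u'$ and bounding term by term. The only difference is cosmetic: the paper also records $\bar v_{2+j}' = (1-c_j\bar u')\,\bar u''(\alpha_j(\bar\beta_1,\bar u))$, which serves only to evaluate $\|\bar v_{2+j}'\|_{C^0}$ in practice, not to prove the inequality.
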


\begin{proof}
    Throughout the proof we use that $\partial_t\alpha_j(\bar{\beta}_1,\bar{u}) = 1 - c_j\bar{u}'$, so that
    \begin{equation}\label{eq : derivatives of bar vi}
        \bar{v}_{2+j}' = (1-c_j\bar{u}')\,\bar{u}''(\alpha_j(\bar{\beta}_1,\bar{u})), \qquad j \in \{1,2\}.
    \end{equation}
    The first inequality is a direct consequence of \eqref{eq : partials of g autonomous}, using that $\|h(\alpha_j(\bar{\beta}_1,\bar{u}))\|_{C^0} \leq \|h\|_{C^0}$ for all $h \in C^0_{per}([-\pi,\pi])$. Next, differentiating $\partial_u g(\bar{x})h$ given in \eqref{eq : partials of g autonomous} with respect to $t$, we obtain
    \begin{align*}
        \partial_t[\partial_u g(\bar{x})h] = -\bar{\beta}_1\gamma h' - \sum_{j=1}^2 \bar{\beta}_1\kappa_j \left( (1-c_j\bar{u}')\,h'(\alpha_j(\bar{\beta}_1,\bar{u})) - c_j\left(\bar{v}_{2+j}'h + \bar{v}_{2+j}h'\right) \right),
    \end{align*}
    and the second inequality follows using $\|h\|_{C^0} \leq 1$, $\|h'\|_{C^0} \leq 1$ and \eqref{eq : derivatives of bar vi}.
\end{proof}
Using the above lemma combined with Lemma \ref{lem : Z1 forcing} and \eqref{eq : Z12 autonomous}, we obtain explicit formulas for the computation of the bound $Z_1$.

\subsubsection{Computation of \boldmath$Z_2$\unboldmath} Now we turn to the computation of the bound $Z_2$. We follow a strategy similar to the one developed in Lemma \ref{lem : Z2 forcing}, and provide a complete proof in the following $Z_2$ result. Note that, in contrast with Section \ref{ssec : Z2 forcing}, the derivative of $\mathbb{F}$ now also varies in the direction $\beta_1$, which is why two separate contributions appear in \eqref{def : Z2 hat autonomous}.
\begin{lemma}\label{lem : Z2 autonomous}
Let $\bar{x} = (\bar{\beta},\bar{u})$ be as in \eqref{eq : approx solution sec 3} and let $0 < r < |\bar{\beta}_1|$. For $j \in \{1,2\}$, let
\begin{equation}\label{eq : Z2 autonomous constants}
    \delta_j \bydef \frac{|a_j|}{|\bar{\beta}_1|\left(|\bar{\beta}_1| - r\right)} \qquad \text{ and } \qquad \ell_j \bydef \delta_j + |c_j|,
\end{equation}
and let $\hat{\mathcal{Z}}_2(r)$ be a bound satisfying $\hat{\mathcal{Z}}_2(r) \geq \hat{\mathcal{Z}}_2^{\beta}(r) + \hat{\mathcal{Z}}_2^{u}(r)$, where
\begin{equation}\label{def : Z2 hat autonomous}
\begin{aligned}
    \hat{\mathcal{Z}}_2^{\beta}(r) &\bydef |\gamma| + \sum_{j=1}^{2} |\kappa_j| \Big( 1 + \ell_j \|\bar{u}'\Vert_{C^0} + \delta_j \left(\|\bar{u}'\Vert_{C^0} + r\right) + |\bar{\beta}_1| \delta_j \left(1 + \ell_j \|\bar{u}''\Vert_{C^0}\right) \Big),\\
    \hat{\mathcal{Z}}_2^{u}(r) &\bydef |\gamma| + \sum_{j=1}^{2} |\kappa_j| \Big( 1 + |\bar{\beta}_1| \ell_j + |c_j| \big( \|\bar{u}'\Vert_{C^0} + r + |\bar{\beta}_1| \left(1 + \ell_j \|\bar{u}''\Vert_{C^0}\right) \big) \Big).
\end{aligned}
\end{equation}
Then, defining
\[
Z_2(r) \bydef \max\{\|\mathbb{A}\mathbb{D}^\dagger\|_{B(H_0)}, ~ \|\mathbb{D}\mathbb{A}\mathbb{D}^\dagger\|_{B(H_0)}\} \hat{\mathcal{Z}}_2(r),
\]
we have that $\|\mathbb{A}(D\mathbb{F}(\bar{x} + x) - D\mathbb{F}(\bar{x}))\|_{B(H)} \leq Z_2(r)r$ for all $x \in B_r(0) \subset H$.
\end{lemma}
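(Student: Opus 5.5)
We follow the structure of the proof of Lemma~\ref{lem : Z2 forcing}. Write $D\mathbb{F}(\bx+x) - D\mathbb{F}(\bx) = \mathbb{D}^\dagger\big(D\mathbb{Q}(\bx+x) - D\mathbb{Q}(\bx)\big)$, using \eqref{eq : decomposition DF with DQ sec 3}. The phase condition is linear and $\mathbb{L}$ is constant, so only the $g$-dependent entries survive. The difference then factors as an operator of norm one on $H_0$ (the analogue of \eqref{eq : identity norm M}, built from $\mathbb{M}$ and $-I$) composed with the map
\[
(h_0,h) \mapsto \big(\partial_{\beta_1}g(\bx+x)-\partial_{\beta_1}g(\bx)\big)h_{0,1} + \big(\partial_u g(\bx+x)-\partial_u g(\bx)\big)h .
\]
Splitting $\|\cdot\|_H$ into its $H_0$ and $H_1$ parts via \eqref{eq : identity norm H with H0 H1}, exactly as before, reduces the claim to two estimates valid for $x=(\beta,u)$ with $\|x\|_H\le r$:
\[
\|\partial_{\beta_1}g(\bx+x)-\partial_{\beta_1}g(\bx)\|_{C^0}\le \hat{\mathcal Z}_2^\beta(r)\,r,
\qquad
\|\partial_u g(\bx+x)-\partial_u g(\bx)\|_{B(C^1,C^0)}\le \hat{\mathcal Z}_2^u(r)\,r .
\]

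The key preliminary step is a Lipschitz bound on the delayed arguments. We have
\[
\alpha_j(\bar\beta_1+\beta_1,\bu+u)-\alpha_j(\bar\beta_1,\bu) = a_j\Big(\tfrac{1}{\bar\beta_1}-\tfrac{1}{\bar\beta_1+\beta_1}\Big) - c_j u .
\]
Since $r<|\bar\beta_1|$, the first term is bounded by $\frac{|a_j|\,r}{|\bar\beta_1|(|\bar\beta_1|-r)} = \delta_j r$. The whole difference is therefore bounded in $C^0$ by $(\delta_j+|c_j|)r=\ell_j r$. Combining this with the mean value inequality gives, for any $w\in C^1$,
\[
\|w(\alpha_j(\bar\beta_1+\beta_1,\bu+u)) - w(\alpha_j(\bar\beta_1,\bu))\|_{C^0}\le \ell_j r\|w'\|_{C^0}.
\]
We also need the derivative of the coefficient: $\big|\tfrac{a_j}{\bar\beta_1+\beta_1}-\tfrac{a_j}{\bar\beta_1}\big|\cdot\tfrac{1}{\ldots}$, handled similarly. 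Concretely, $\tfrac{a_j}{\beta_1}$ appears in $\partial_{\beta_1}g$ as $\tfrac{\kappa_j a_j}{\beta_1}\,u'(\alpha_j)$. Its variation is bounded using $|\tfrac{a_j}{\bar\beta_1+\beta_1}-\tfrac{a_j}{\bar\beta_1}|\le |\bar\beta_1|\delta_j r$ and $|\tfrac{a_j}{\bar\beta_1+\beta_1}|\le |\bar\beta_1|\delta_j$.

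For the $\beta_1$-derivative, we write the general formula $\partial_{\beta_1}g(x) = \mathbb{G}(v_1,v_2,u) - \sum_j \frac{\kappa_j a_j}{\beta_1}u'(\alpha_j)$, with $v_j=u(\alpha_j)$, and telescope each term.
\begin{itemize}
\item The $-\gamma u$ term contributes $|\gamma|r$.
\item For $v_j$, we split $(\bu+u)(\alpha_j^{new})-\bu(\alpha_j^{old})$ into $u(\alpha_j^{new})$ and the shifted $\bu$ difference. This gives $|\kappa_j|(1+\ell_j\|\bu'\|)r$.
\item For $\frac{a_j}{\beta_1}u'(\alpha_j)$, we add and subtract to isolate three pieces: the change of coefficient times $\|(\bu+u)'\|\le\|\bu'\|+r$; the coefficient times $u'(\alpha_j^{new})$; and the coefficient times the shift of $\bu'$, controlled by $\ell_j r\|\bu''\|$.
\end{itemize}
Collecting these should reproduce $\hat{\mathcal Z}_2^\beta(r)$.

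For $\partial_u g$, we apply the same telescoping to
\[
\beta_1\gamma h + \beta_1\kappa_j\big(h(\alpha_j) - c_j(u+\bu)'(\alpha_j)h\big),
\]
with $\|h\|_{C^1}\le1$.
\begin{itemize}
\item The changes in the factor $\beta_1$ contribute $|\gamma|r$ and $|\kappa_j|r(1+|c_j|(\|\bu'\|+r))$.
\item The shift of $h$ gives $|\bar\beta_1|\ell_j r$.
\item The $c_j$-term gives $|\bar\beta_1||c_j|(1+\ell_j\|\bu''\|)r$.
\end{itemize}

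The main obstacle is purely bookkeeping: choosing the order of add-and-subtract so that each piece matches a term of $\hat{\mathcal Z}_2^\beta,\hat{\mathcal Z}_2^u$ without losing factors. In particular, we must keep the perturbed factors bounded by $|\bar\beta_1|+r$ or $\|\bu'\|+r$ consistently. We will try to always place the perturbed quantity on the factor whose bound already carries $+r$. Once the two estimates hold, the conclusion follows verbatim from the final lines of the proof of Lemma~\ref{lem : Z2 forcing}.
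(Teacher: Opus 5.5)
Your proposal follows the paper's proof essentially step for step. It uses the same reduction $D\mathbb{F}(\bx+x)-D\mathbb{F}(\bx)=\mathbb{D}^\dagger(D\mathbb{Q}(\bx+x)-D\mathbb{Q}(\bx))$ to separate estimates on the $\beta_1$- and $u$-derivatives of $g$, the same $\ell_j r$ bound on the delay increment, and the same add-and-subtract splittings; in particular, you correctly shift only $\bu'$ and never compare $u'$ at two different delayed arguments.

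One inequality needs fixing. The bound $\big|\frac{a_j}{\bar\beta_1+\beta_1}-\frac{a_j}{\bar\beta_1}\big| \le |\bar\beta_1|\delta_j r$ is false in general when $|\bar\beta_1|<1$, for instance for $\bar\beta_1\approx 0.41$ in Theorem~\ref{th : CAP autonomous}. The correct bound is $\delta_j r$, which you already derived for the same quantity in the delay-increment step. Multiplying $\delta_j r$ by $\|\bu'\|_{C^0}+r$ gives exactly the term $\delta_j(\|\bu'\|_{C^0}+r)$ of $\hat{\mathcal Z}_2^\beta(r)$. The coefficient bound $|a_j|/|\bar\beta_1|\le|\bar\beta_1|\delta_j$, multiplied by $(1+\ell_j\|\bu''\|_{C^0})r$, then supplies the last term.
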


\begin{proof}
In order to simplify the presentation of our analysis, let us introduce the maps $g_1, g_2$ and $g_3$ defined as follows
\begin{equation}\label{def : decomposition of g with gi}
\begin{aligned}
    g_1(\beta_1,u) &\bydef -\gamma \beta_1 u,\\
    g_{1+j}(\beta_1,u) &\bydef -\kappa_j \beta_1 u(\alpha_j(\beta_1,u)), \quad j \in \{1,2\}.
\end{aligned}
\end{equation}
Recalling \eqref{def : G autonomous example} and \eqref{eq : g autonomous}, we have that $g(\beta_1,u) = g_1(\beta_1,u) + g_2(\beta_1,u) + g_3(\beta_1,u)$. Moreover, since
\[
    \partial_{\beta_1}\alpha_j(\beta_1,u) = \frac{a_j}{\beta_1^2} \qquad \text{ and } \qquad \partial_{u}\alpha_j(\beta_1,u)h = - c_j h,
\]
the chain rule yields, for all $h \in C^1_{per}([-\pi,\pi])$ and all $j \in \{1,2\}$,
\begin{equation}\label{eq : partials of gi autonomous}
\begin{aligned}
    \partial_{\beta_1} g_1(\beta_1,u) &= -\gamma u, \qquad \qquad \partial_{u} g_1(\beta_1,u)h = -\gamma \beta_1 h,\\
    \partial_{\beta_1} g_{1+j}(\beta_1,u) &= -\kappa_j u(\alpha_j(\beta_1,u)) - \frac{\kappa_j a_j}{\beta_1} u'(\alpha_j(\beta_1,u)),\\
    \partial_{u} g_{1+j}(\beta_1,u)h &= -\kappa_j \beta_1 \Big( h(\alpha_j(\beta_1,u)) - c_j u'(\alpha_j(\beta_1,u)) h \Big),
\end{aligned}
\end{equation}
which is consistent with \eqref{eq : partials of g autonomous} when evaluated at $\bar{x}$.

Now, let $x = (b_1,b_2,u) \in B_r(0) \subset H$, so that $|b_1| \leq r$, $\|u\|_{C^0} \leq r$ and $\|u'\|_{C^0} \leq r$. Throughout the proof we denote
\[
    \tilde{\beta}_1 \bydef \bar{\beta}_1 + b_1 \qquad \text{ and } \qquad v \bydef \bar{u} + u,
\]
and we notice that $|\tilde{\beta}_1| \geq |\bar{\beta}_1| - r > 0$ and $\|v'\|_{C^0} \leq \|\bar{u}'\|_{C^0} + r$. Finally, we denote
\begin{align*}
    \Delta_\beta &\bydef \partial_{\beta_1} g(\tilde{\beta}_1, v) - \partial_{\beta_1} g(\bar{\beta}_1, \bar{u}) \in C^0_{per}([-\pi,\pi]),\\
    \Delta_u &\bydef \partial_{u} g(\tilde{\beta}_1, v) - \partial_{u} g(\bar{\beta}_1, \bar{u}) \in B\big(C^1_{per}([-\pi,\pi]), C^0_{per}([-\pi,\pi])\big).
\end{align*}

\medskip
\noindent \textbf{Step 1: reduction to $\Delta_\beta$ and $\Delta_u$.} The operator $\mathbb{L}$ of \eqref{eq : decomposition DF with DQ sec 3} does not depend on the point at which the derivative is evaluated, and $\partial_{\beta_2}g = 0$ by \eqref{eq : partials of g autonomous}. Consequently,
\begin{equation}\label{eq : Z2 autonomous difference of DF}
    D\mathbb{F}(\bar{x}+x) - D\mathbb{F}(\bar{x}) = \mathbb{D}^\dagger\left(D\mathbb{Q}(\bar{x}+x) - D\mathbb{Q}(\bar{x})\right)
    = \mathbb{D}^\dagger \begin{pmatrix}
        0 & 0 & 0\\
        \mathbb{M}\Delta_\beta & 0 & \mathbb{M}\Delta_u\\
        -\Delta_\beta & 0 & -\Delta_u
    \end{pmatrix}.
\end{equation}
Let $z = (z_1,z_2,h) \in H$ with $\|z\|_H \leq 1$, so that $|z_1| \leq 1$ and $\|h\|_{C^1} \leq 1$. Since $|\mathbb{M}w| \leq \|w\|_{C^0}$ for all $w \in C^0_{per}([-\pi,\pi])$, we obtain from \eqref{eq : Z2 autonomous difference of DF} that
\[
    \left\|\left(D\mathbb{Q}(\bar{x}+x) - D\mathbb{Q}(\bar{x})\right)z\right\|_{H_0} = \left\|\Delta_\beta z_1 + \Delta_u h\right\|_{C^0} \leq \|\Delta_\beta\|_{C^0} + \|\Delta_u\|_{B(C^1,C^0)},
\]
and therefore
\begin{equation}\label{eq : Z2 autonomous norm DQ}
    \left\|D\mathbb{Q}(\bar{x}+x) - D\mathbb{Q}(\bar{x})\right\|_{B(H,H_0)} \leq \|\Delta_\beta\|_{C^0} + \|\Delta_u\|_{B(C^1,C^0)}.
\end{equation}
Denoting $\mathbb{K} \bydef D\mathbb{Q}(\bar{x}+x) - D\mathbb{Q}(\bar{x})$ and using \eqref{eq : identity norm H with H0 H1} followed by \eqref{eq : identity norm H1 and D}, we get for all $z \in H$ that
\begin{align*}
    \|\mathbb{A}(D\mathbb{F}(\bar{x}+x) - D\mathbb{F}(\bar{x}))z\|_{H} &= \max\left\{\|\mathbb{A}\mathbb{D}^\dagger \mathbb{K}z\|_{H_0}, ~ \|\mathbb{D}\mathbb{A}\mathbb{D}^\dagger \mathbb{K}z\|_{H_0}\right\}\\
    &\leq \max\{\|\mathbb{A}\mathbb{D}^\dagger\|_{B(H_0)}, ~ \|\mathbb{D}\mathbb{A}\mathbb{D}^\dagger\|_{B(H_0)}\} \|\mathbb{K}\|_{B(H,H_0)} \|z\|_H.
\end{align*}
Combining the above with \eqref{eq : Z2 autonomous norm DQ}, the proof reduces to establishing that
\begin{equation}\label{eq : Z2 autonomous goal}
    \|\Delta_\beta\|_{C^0} \leq \hat{\mathcal{Z}}_2^{\beta}(r) r \qquad \text{ and } \qquad \|\Delta_u\|_{B(C^1,C^0)} \leq \hat{\mathcal{Z}}_2^{u}(r) r.
\end{equation}

\medskip
\noindent \textbf{Step 2: increment of the delays.} Recalling that $\alpha_j(\beta_1,u)(t) = t - \frac{a_j}{\beta_1} - c_j u(t)$, we have for $j \in \{1,2\}$ that
\begin{equation}\label{eq : Z2 autonomous delay increment}
    \left\|\alpha_j(\tilde{\beta}_1,v) - \alpha_j(\bar{\beta}_1,\bar{u})\right\|_{C^0} \leq |a_j| \left|\frac{1}{\tilde{\beta}_1} - \frac{1}{\bar{\beta}_1}\right| + |c_j| \|u\|_{C^0} \leq \left(\frac{|a_j|}{|\bar{\beta}_1|(|\bar{\beta}_1|-r)} + |c_j|\right) r = \ell_j r,
\end{equation}
where we used that $|\tilde{\beta}_1 - \bar{\beta}_1| = |b_1| \leq r$, that $|\tilde{\beta}_1| \geq |\bar{\beta}_1| - r$ and that $\|u\|_{C^0} \leq r$. Note that $\bar{u} = \pi^{\leq N}\bar{u}$ is a trigonometric polynomial by \eqref{eq : approx solution sec 3}, hence $\bar{u} \in C^\infty_{per}([-\pi,\pi])$ and the quantities $\|\bar{u}'\|_{C^0}$ and $\|\bar{u}''\|_{C^0}$ are computable. In particular, the mean value inequality combined with \eqref{eq : Z2 autonomous delay increment} yields
\begin{equation}\label{eq : Z2 autonomous mean value}
    \left\|\bar{u}(\alpha_j(\tilde{\beta}_1,v)) - \bar{u}(\alpha_j(\bar{\beta}_1,\bar{u}))\right\|_{C^0} \leq \|\bar{u}'\|_{C^0} \ell_j r
    \quad \text{ and } \quad
    \left\|\bar{u}'(\alpha_j(\tilde{\beta}_1,v)) - \bar{u}'(\alpha_j(\bar{\beta}_1,\bar{u}))\right\|_{C^0} \leq \|\bar{u}''\|_{C^0} \ell_j r.
\end{equation}
Let us emphasize that in \eqref{eq : Z2 autonomous mean value} the increments in $\beta_1$ and in $u$ are performed simultaneously. Splitting them into two successive steps would require comparing $v' = \bar{u}' + u'$ at two distinct delayed arguments, hence would require a control on $\|u''\|_{C^0}$, which is not available since $x \in H$ only.

\medskip
\noindent \textbf{Step 3: estimate of $\Delta_\beta$.} Using \eqref{def : decomposition of g with gi} and \eqref{eq : partials of gi autonomous}, we have
\begin{equation}\label{eq : Z2 autonomous step 0}
    \|\Delta_\beta\|_{C^0} \leq \sum_{i=1}^{3} \left\|\partial_{\beta_1} g_i(\tilde{\beta}_1,v) - \partial_{\beta_1} g_i(\bar{\beta}_1,\bar{u})\right\|_{C^0}.
\end{equation}
The term $i=1$ is immediate, as
\[
    \left\|\partial_{\beta_1} g_1(\tilde{\beta}_1,v) - \partial_{\beta_1} g_1(\bar{\beta}_1,\bar{u})\right\|_{C^0} = |\gamma| \|u\|_{C^0} \leq |\gamma| r.
\]
Now we turn our attention to $g_2$ and $g_3$, which we treat simultaneously by letting $j \in \{1,2\}$. Using \eqref{eq : partials of gi autonomous}, we obtain
\begin{equation}\label{eq : Z2 autonomous step 1}
\begin{aligned}
    \left\|\partial_{\beta_1} g_{1+j}(\tilde{\beta}_1,v) - \partial_{\beta_1} g_{1+j}(\bar{\beta}_1,\bar{u})\right\|_{C^0}
    \leq\ & |\kappa_j| \left\| v(\alpha_j(\tilde{\beta}_1,v)) - \bar{u}(\alpha_j(\bar{\beta}_1,\bar{u})) \right\|_{C^0}\\
    & + |\kappa_j a_j| \left\| \frac{v'(\alpha_j(\tilde{\beta}_1,v))}{\tilde{\beta}_1} - \frac{\bar{u}'(\alpha_j(\bar{\beta}_1,\bar{u}))}{\bar{\beta}_1} \right\|_{C^0}.
\end{aligned}
\end{equation}
We estimate separately the two terms of the right-hand side of \eqref{eq : Z2 autonomous step 1}. Using that $\|u\|_{C^0} \leq r$ and \eqref{eq : Z2 autonomous mean value}, the first one satisfies
\begin{equation}\label{eq : Z2 autonomous step 2}
    \left\| v(\alpha_j(\tilde{\beta}_1,v)) - \bar{u}(\alpha_j(\bar{\beta}_1,\bar{u})) \right\|_{C^0} \leq \|u\|_{C^0} + \left\| \bar{u}(\alpha_j(\tilde{\beta}_1,v)) - \bar{u}(\alpha_j(\bar{\beta}_1,\bar{u})) \right\|_{C^0} \leq \left(1 + \ell_j \|\bar{u}'\|_{C^0} \right) r.
\end{equation}
As for the second one, using that $\|u'\|_{C^0} \leq r$, $\|v'\|_{C^0} \leq \|\bar{u}'\|_{C^0} + r$, $|\tilde{\beta}_1| \geq |\bar{\beta}_1| - r$ and \eqref{eq : Z2 autonomous mean value}, we get
\begin{equation}\label{eq : Z2 autonomous step 3}
\begin{aligned}
    \left\| \frac{v'(\alpha_j(\tilde{\beta}_1,v))}{\tilde{\beta}_1} - \frac{\bar{u}'(\alpha_j(\bar{\beta}_1,\bar{u}))}{\bar{\beta}_1} \right\|_{C^0}
    &\leq \left|\frac{1}{\tilde{\beta}_1} - \frac{1}{\bar{\beta}_1}\right| \|v'\|_{C^0} + \frac{1}{|\bar{\beta}_1|} \left\| v'(\alpha_j(\tilde{\beta}_1,v)) - \bar{u}'(\alpha_j(\bar{\beta}_1,\bar{u})) \right\|_{C^0}\\
    &\leq \frac{\|\bar{u}'\|_{C^0} + r}{|\bar{\beta}_1|(|\bar{\beta}_1|-r)} r + \frac{1 + \ell_j\|\bar{u}''\|_{C^0}}{|\bar{\beta}_1|-r} r.
\end{aligned}
\end{equation}
Since $\frac{|a_j|}{|\bar{\beta}_1|(|\bar{\beta}_1|-r)} = \delta_j$ and $\frac{|a_j|}{|\bar{\beta}_1|-r} = |\bar{\beta}_1|\delta_j$ by \eqref{eq : Z2 autonomous constants}, combining \eqref{eq : Z2 autonomous step 1}, \eqref{eq : Z2 autonomous step 2} and \eqref{eq : Z2 autonomous step 3} yields
\[
    \left\|\partial_{\beta_1} g_{1+j}(\tilde{\beta}_1,v) - \partial_{\beta_1} g_{1+j}(\bar{\beta}_1,\bar{u})\right\|_{C^0} \leq |\kappa_j| \Big( 1 + \ell_j\|\bar{u}'\|_{C^0} + \delta_j\left(\|\bar{u}'\|_{C^0} + r\right) + |\bar{\beta}_1|\delta_j\left(1 + \ell_j\|\bar{u}''\|_{C^0}\right) \Big) r.
\]
Summing the above over $j \in \{1,2\}$, adding the contribution of $g_1$ and recalling \eqref{def : Z2 hat autonomous} and \eqref{eq : Z2 autonomous step 0}, we have proven the first inequality of \eqref{eq : Z2 autonomous goal}.

\medskip
\noindent \textbf{Step 4: estimate of $\Delta_u$.} Let $h \in C^1_{per}([-\pi,\pi])$ be such that $\|h\|_{C^1} = \max\{\|h\|_{C^0}, \|h'\|_{C^0}\} \leq 1$. Using \eqref{def : decomposition of g with gi} and \eqref{eq : partials of gi autonomous} again, we have
\begin{equation}\label{eq : Z2 autonomous step 4}
    \|\Delta_u h\|_{C^0} \leq \sum_{i=1}^{3} \left\|\partial_{u} g_i(\tilde{\beta}_1,v)h - \partial_{u} g_i(\bar{\beta}_1,\bar{u})h\right\|_{C^0},
\end{equation}
and the term $i=1$ satisfies
\[
    \left\|\partial_{u} g_1(\tilde{\beta}_1,v)h - \partial_{u} g_1(\bar{\beta}_1,\bar{u})h\right\|_{C^0} = |\gamma| |b_1| \|h\|_{C^0} \leq |\gamma| r.
\]
Again, we treat $g_2$ and $g_3$ simultaneously by letting $j \in \{1,2\}$. From \eqref{eq : partials of gi autonomous} we get
\begin{equation}\label{eq : Z2 autonomous step 5}
\begin{aligned}
    \left\|\partial_{u} g_{1+j}(\tilde{\beta}_1,v)h - \partial_{u} g_{1+j}(\bar{\beta}_1,\bar{u})h\right\|_{C^0}
    \leq\ & |\kappa_j| \left\| \tilde{\beta}_1 h(\alpha_j(\tilde{\beta}_1,v)) - \bar{\beta}_1 h(\alpha_j(\bar{\beta}_1,\bar{u})) \right\|_{C^0}\\
    & + |\kappa_j c_j| \left\| \tilde{\beta}_1 v'(\alpha_j(\tilde{\beta}_1,v)) - \bar{\beta}_1 \bar{u}'(\alpha_j(\bar{\beta}_1,\bar{u})) \right\|_{C^0}.
\end{aligned}
\end{equation}
Writing $\tilde{\beta}_1 = \bar{\beta}_1 + b_1$, the mean value inequality combined with \eqref{eq : Z2 autonomous delay increment} provides
\[
    \left\| \tilde{\beta}_1 h(\alpha_j(\tilde{\beta}_1,v)) - \bar{\beta}_1 h(\alpha_j(\bar{\beta}_1,\bar{u})) \right\|_{C^0}
    \leq |b_1| \|h\|_{C^0} + |\bar{\beta}_1| \|h'\|_{C^0} \left\|\alpha_j(\tilde{\beta}_1,v) - \alpha_j(\bar{\beta}_1,\bar{u})\right\|_{C^0}
    \leq \left(1 + |\bar{\beta}_1| \ell_j \right) r,
\]
whereas \eqref{eq : Z2 autonomous mean value} and $\|u'\|_{C^0} \leq r$ provide
\begin{align*}
    \left\| \tilde{\beta}_1 v'(\alpha_j(\tilde{\beta}_1,v)) - \bar{\beta}_1 \bar{u}'(\alpha_j(\bar{\beta}_1,\bar{u})) \right\|_{C^0}
    &\leq |b_1| \|v'\|_{C^0} + |\bar{\beta}_1| \left( \|u'\|_{C^0} + \|\bar{u}''\|_{C^0} \ell_j r \right)\\
    &\leq \Big( \|\bar{u}'\|_{C^0} + r + |\bar{\beta}_1| \left(1 + \ell_j \|\bar{u}''\|_{C^0}\right) \Big) r.
\end{align*}
Plugging the two previous inequalities in \eqref{eq : Z2 autonomous step 5}, we obtain
\[
    \left\|\partial_{u} g_{1+j}(\tilde{\beta}_1,v)h - \partial_{u} g_{1+j}(\bar{\beta}_1,\bar{u})h\right\|_{C^0} \leq |\kappa_j| \Big( 1 + |\bar{\beta}_1| \ell_j + |c_j| \big( \|\bar{u}'\|_{C^0} + r + |\bar{\beta}_1| \left(1 + \ell_j \|\bar{u}''\|_{C^0}\right) \big) \Big) r.
\]
Summing over $j \in \{1,2\}$, adding the contribution of $g_1$, recalling \eqref{def : Z2 hat autonomous} and \eqref{eq : Z2 autonomous step 4} and taking the supremum over $\|h\|_{C^1} \leq 1$, we have proven the second inequality of \eqref{eq : Z2 autonomous goal}. This concludes the proof.
\end{proof}

\subsection{Computer-assisted proof of existence}\label{sec : CAP autonomous}

\begin{theorem}\label{th : CAP autonomous}
    Let $\gamma = 4.75$, $a_1 = 1.3$, $a_2 = 6$, $c_1 = c_2 = 1$, $\kappa_1 = 3.4$ and $\kappa_2 = 3$, and let $\bx = (\bar{\beta}_1,\bar{\beta}_2,\bu)$ be the approximate solution represented in Figure~\ref{fig : autonomous}. Then, there exist $\tilde{\beta}_1 > 0$ and $\tilde{u} \in C^1_{per}([-\pi,\pi])$ such that
    \begin{align*}
        |\tilde{\beta}_1 - \bar{\beta}_1| \leq r_0 \quad \text{ and } \quad \|\tilde{u} - \bu\|_{C^1} \leq r_0, \quad \text{ where } r_0 \bydef 9.7 \times 10^{-11},
    \end{align*}
    and $v : t \mapsto \tilde{\beta}_1\tilde{u}\left(\frac{t}{\tilde{\beta}_1}\right)$ is an infinitely differentiable $2\pi\tilde{\beta}_1$-periodic solution to \eqref{eq : autonomous SDDE}.
\end{theorem}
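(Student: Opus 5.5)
The proof will mirror that of Theorem~\ref{th : CAP forcing} and apply Theorem~\ref{thm:radPolyBanach} to the map $\mathbb{F}$ of \eqref{def : zero finding F autonomous} on $H = \R^2 \times C^1_{per}([-\pi,\pi])$.

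\textbf{Numerical objects.} We fix the approximation $\bx = (\bar\beta_1,\bar\beta_2,\bu)$ of Figure~\ref{fig : autonomous}, with $\bu$ of order $30$ and $\bar\beta_1 \approx 0.4104$. We then choose a truncation size $N$, substantially larger than $30$ since $Z_{1,1}$ and $Z_{1,2}$ only decay like $\theta_N \sim \frac{4}{\pi^2}\frac{\ln N}{N}$, and we fix the phase condition value $u_0$ consistently with $\bu$. The operator $\mathbb{A} = \mathbb{A}^{\le N} + \pi^{>N}$ is obtained by numerically inverting the Galerkin matrix $\Pi^{\le N}DF(\bar X)\Pi^{\le N}$. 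This matrix is assembled from the decomposition \eqref{eq : decomposition DF with DQ sec 3}. Its delay blocks are the operators $h \mapsto h(\alpha_j(\bar\beta_1,\bu))$, handled as $L_3$ in \eqref{def : operator L3} with argument $\alpha_j$. Its multiplication blocks are Toeplitz matrices built from $\bar v_3$ and $\bar v_4$.

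\textbf{Bounds.} The Fourier coefficients of $\bar v_1,\dots,\bar v_4$, of $g(\bar\beta_1,\bu)$ and of $\partial_{\beta_1}g(\bx)$ are all of the form \eqref{eq:composition_term}, with $\btau(t) = a_j/\bar\beta_1 + c_j\bu(t)$. We enclose them with Section~\ref{sec:fourier_coeff_delayed_term}: the low modes via the FFT together with the aliasing bound \eqref{eq:aliasing_control}, and the tail via \eqref{eq : tail estimates} with some $\bar\nu>1$. The bounds are then obtained as follows.
\begin{itemize}
\item $Y$ follows from Lemma~\ref{lem : Y bound toy model}.
\item $Z_1 = Z_{1,0}+Z_{1,1}+Z_{1,2}$ follows from Lemma~\ref{lem : Z1 forcing}, with $Z_{1,2}$ modified as in \eqref{eq : Z12 autonomous} and the derivative norms bounded by Lemma~\ref{lem : Z1 autonomous}.
\item $Z_2(r)$ follows from Lemma~\ref{lem : Z2 autonomous}, for some $r_* < \bar\beta_1$, say $r_* = 10^{-6}$. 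Since $\hat{\mathcal Z}_2$ is increasing in $r$, we may take $Z_2 = Z_2(r_*)$.
\end{itemize}
All operator norms of $\mathbb{A}\mathbb{D}^\dagger$, $\mathbb{D}\mathbb{A}\mathbb{D}^\dagger$ and of the finite part in $Z_{1,0}$ are reduced to finite matrices by Lemma~\ref{lem : from infinite to finite}. These matrices are then bounded sharply by Lemma~\ref{lem : sampling operator norm}, with $N_{\tt FFT} > 2N$. The scalar rows, namely the phase condition and the mean equation, are bounded separately by $\ell^1$ sums. Finally, $\theta_N$ is enclosed using Proposition~\ref{prop : computation Kolmogorov constant}.

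\textbf{Conclusion.} With these values we verify, in interval arithmetic, that $Y + Z_1 r_0 + \frac12 Z_2 r_0^2 \le r_0$ and $Z_1 + Z_2 r_0 < 1$ for $r_0 = 9.7\times 10^{-11} \le r_*$. Injectivity of $\mathbb{A}$ follows exactly as in Theorem~\ref{th : CAP forcing}: $Z_1<1$ makes $\mathbb{A}D\mathbb{F}(\bx)$ invertible, so $\mathbb{A}^{\le N}$ is surjective on the finite-dimensional space $H^{\le N}$, hence bijective, hence $\mathbb{A}$ is injective. Theorem~\ref{thm:radPolyBanach} then gives a zero $\tilde x = (\tilde\beta_1,\tilde\beta_2,\tilde u)$ with $\|\tilde x - \bx\|_H \le r_0$. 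This yields $|\tilde\beta_1 - \bar\beta_1| \le r_0$ and $\|\tilde u - \bu\|_{C^1} \le r_0$, and in particular $\tilde\beta_1 > 0$. Lemma~\ref{lem : zeros of F autonomous} then shows that $v(t) = \tilde\beta_1\tilde u(t/\tilde\beta_1)$ is a smooth $2\pi\tilde\beta_1$-periodic solution of \eqref{eq : autonomous SDDE}.

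\textbf{Main obstacle.} I expect the hardest step to be getting $Z_1 < 1$. The phase condition row $(\pi^{\le N}\cdot)(0)$ and the $\beta_1$-column $\partial_{\beta_1}g$ must be incorporated into the Galerkin matrix and into the $H_0$ norms without loss. At the same time, $N$ must be large enough that $\theta_N$ times the bounds of Lemma~\ref{lem : Z1 autonomous} is small, yet the dense $N\times N$ composition matrices must stay affordable. I would first try $N$ of a few hundred, and increase $N$ and $N_{\tt FFT}$ until the bounds close.
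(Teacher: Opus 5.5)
Your proposal follows the paper's proof essentially step for step. It uses the same map \eqref{def : zero finding F autonomous}, the same $Y$ bound, $Z_1$ via Lemma~\ref{lem : Z1 forcing} with \eqref{eq : Z12 autonomous} and Lemma~\ref{lem : Z1 autonomous}, $Z_2$ via Lemma~\ref{lem : Z2 autonomous} frozen at $r_*$ by monotonicity, and the same injectivity argument for $\mathbb{A}$. The differences are only in the numerical parameters: the paper takes $u_0=0$, $N=800$, $N_{\tt FFT}=2^{12}$, $\bar\nu=1.1$ and $r_*=10^{-3}$, and obtains $Y\le 1.6\times 10^{-11}$, $Z_1\le 0.834$, $Z_2(r_*)\le 3.3\times 10^{4}$. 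Your smaller choice $r_*=10^{-6}$ works equally well, since $Z_2(r)$ is nondecreasing and $r_0\le r_*$. The paper also states explicitly a point you only use implicitly: $r_*<\bar\beta_1$ ensures that the ball on which Theorem~\ref{thm:radPolyBanach} is applied lies in $\{\beta_1>0\}$, where $\mathbb{F}$ is defined.
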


\begin{proof}
     $\bu$ is chosen as a trigonometric polynomial of order $30$, we take $u_0 = 0$ in the phase condition and we consider the truncation order $N = 800$. Next,  we construct $\mathbb{A}$ as in the proof of Theorem \ref{th : CAP forcing}. Then, choosing $N_{\tt FFT} = 2^{12}$, $\bar{\nu} = 1.1$ and $r_* = 10^{-3}$, we compute the bounds of Lemmas \ref{lem : Y bound toy model}, \ref{lem : Z1 forcing} (using Lemma \ref{lem : Z1 autonomous} and \eqref{eq : Z12 autonomous}) and \ref{lem : Z2 autonomous}, as in the proof of Theorem \ref{th : CAP forcing}, and we obtain
    \begin{align*}
        Y \leq 1.6 \times 10^{-11}, \quad Z_1 \leq 0.834 \quad \text{ and } \quad Z_2(r_*) \leq 3.3 \times 10^{4}.
    \end{align*}
    The closed ball of radius $r_*$ centered at $\bx$, which is the only region where $\mathbb{F}$ is used in Theorem \ref{thm:radPolyBanach}, is contained in $\{\beta_1 > 0\}$ since $r_* < \bar{\beta}_1$. Since $\hat{\mathcal{Z}}_2^{\beta}$ and $\hat{\mathcal{Z}}_2^{u}$ are non-decreasing in $r$, we may take $\hat{\mathcal{Z}}_2(r) = \hat{\mathcal{Z}}_2^{\beta}(r_*) + \hat{\mathcal{Z}}_2^{u}(r_*)$ for all $r \leq r_*$, so that Lemma \ref{lem : Z2 autonomous} provides \eqref{eq:Z2_radPolyBanach} with $Z_2 = Z_2(r_*)$. Moreover, $r_0 \leq r_*$, $Y + Z_1 r_0 + \frac{1}{2}Z_2 r_0^2 \leq r_0$ and $Z_1 + Z_2 r_0 < 1$, and $\mathbb{A}$ is injective as in the proof of Theorem \ref{th : CAP forcing}, so that Theorem \ref{thm:radPolyBanach} provides a zero $\tilde{x} = (\tilde{\beta}_1,\tilde{\beta}_2,\tilde{u})$ of $\mathbb{F}$ such that $\|\tilde{x} - \bx\|_H \leq r_0$. In particular, $\tilde{\beta}_1 \geq \bar{\beta}_1 - r_0 > 0$, and we conclude using Lemma \ref{lem : zeros of F autonomous}.
\end{proof}

\section{A gene regulatory model with threshold delay} \label{sec:threshold_SDDE}

In this section, we consider the threshold SDDE studied in \cite{MR4959313}, obtained as a quasi-steady state reduction of a state-dependent delay extension of the Goodwin operon model \cite{Goodwin1963,GedeonOperon2022}. Here $w$ is the effector concentration, $q$ the production flux of messenger RNA, and $v$ the transcription/translation velocity, so that the delay $\tau(t)$ is the time needed to process a strand of fixed length $a$. Rather than reducing the model to a constant-delay equation \cite{Smith1993}, the authors of \cite{MR4959313} study it directly in its threshold form, combining analysis and numerical continuation \cite{WangHumphries2022}, which reveals a rich variety of bifurcations depending on the monotonicity of $q$ and $v$. In particular, when $q$ is constant and $v$ is decreasing, Hopf bifurcations give rise to stable periodic orbits; Theorem~\ref{th : CAP threshold} below validates one of the orbits computed numerically in \cite{MR4959313}, for the same parameter values. The model reads
\begin{equation}\label{eq : appli original}
    \begin{aligned}
    w'(t) &= b e^{-\mu \tau(t)} \frac{v(w(t))}{v(w(t - \tau(t)))} q(w(t-\tau(t))) - \gamma w(t) \\
    a &= \int_{- \tau(t)}^0 v(w(s+t))ds,
\end{aligned}
\end{equation}
where the Hill functions $v,q$ are defined by
\begin{equation}
q(x) = \frac{q^{-} \theta_q^n + q^{+} x^n}{\theta_q^n + x^n} 
\quad \text{and} \quad
v(x) = \frac{v^{-} \theta_v^m + v^{+} x^m}{\theta_v^m + x^m}.
\end{equation}
Periodic solutions of threshold SDDEs with analytic right-hand side and positive analytic velocity are analytic \cite{Krisztin2014} (see also \cite{MR3229655}, where a change of time reduces the problem to a constant delay, for which \cite{MR322315} applies). Since $v>0$, this applies to the pair $(w,\tau)$, which solves \eqref{eq : appli augemented system} below with $\eta=0$. Consequently, by the Paley--Wiener theorem, the Fourier coefficients of periodic solutions of \eqref{eq : appli original} decay geometrically. This is not necessarily the case for the examples of Sections~\ref{sec:forced_toy_model} and~\ref{sec : autonomous SDDE}, for which analyticity of periodic solutions is not known a priori (see \cite{MR3229655} for $C^\infty$ but nonanalytic periodic solutions of delay-differential equations with analytic nonlinearity and analytic time-dependent delay).

The derivative of $q$, which appears in some of the estimates needed for the proof, is given by
\begin{align*}
    q'(x) = \frac{nq^+ x^{n-1}}{\theta_q^n + x^n} - n x^{n-1} \frac{q^{-} \theta_q^n + q^{+} x^n}{(\theta_q^n + x^n)^2} &= \frac{nq^+x^{n-1}(\theta_q^n + x^n) - n x^{n-1}(q^{-} \theta_q^n + q^{+} x^n)}{(\theta_q^n + x^n)^2}\\
   & = \frac{nq^+x^{n-1}\theta_q^n  - n x^{n-1}q^{-} \theta_q^n }{(\theta_q^n + x^n)^2}.
\end{align*}

Our approach is not directly applicable to~\eqref{eq : appli original}; therefore, we first reformulate it into a system of SDDEs, with an extra integral constraint no longer depending on time.  Taking a derivative in the second equation of~\eqref{eq : appli original} and using the Leibniz integral rule for differentiation, we equivalently get
\begin{align*}
    w'(t) &= b e^{-\mu \tau(t)} \frac{v(w(t))}{v(w(t - \tau(t)))} q(w(t-\tau(t))) - \gamma w(t) \\
    \tau'(t) &= 1 - \frac{v(w(t))}{v(w(t - \tau(t)))}\\
    a &= \int_{-\tau(0)}^0 v(w(s))ds.
\end{align*}

Since the above system is overdetermined, we add an unfolding parameter $\eta\in\R$ (a classical idea in the numerical continuation of periodic orbits in conservative systems, see \cite{MunozAlmaraz2003}, which has also been used in computer-assisted proofs, see e.g.~\cite{MR3545977,calleja2021torus})
and solve for
\begin{equation}\label{eq : appli augemented system}
\begin{aligned}
    w'(t) &= b e^{-\mu \tau(t)} \frac{v(w(t))}{v(w(t - \tau(t)))} q(w(t-\tau(t))) - \gamma w(t) \\
    \tau'(t) &= 1 - \frac{v(w(t))}{v(w(t - \tau(t)))} + \eta\\
    a &= \int_{-\tau(0)}^0 v(w(s))ds.
\end{aligned}
\end{equation}
In fact, we prove that having a periodic solution to \eqref{eq : appli augemented system} with $w > 0$ necessarily implies that $\eta = 0$, so that $(w,\tau)$ is a periodic solution to \eqref{eq : appli original}.

\begin{lemma}\label{lem : eta is zero}
If $(w,\tau,\eta) \in C^1_{per}([0,\theta])\times C^1_{per}([0,\theta])\times \R$ solves \eqref{eq : appli augemented system} and $v > 0$, then $\eta=0$.
\end{lemma}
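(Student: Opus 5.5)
We integrate the $\tau$ equation against a suitable weight so that the $v$-ratio term becomes an exact derivative. Set
\[
\Phi(t) \bydef \int_{t-\tau(t)}^{t} v(w(s))\,ds .
\]
Since $w$ and $\tau$ are $\theta$-periodic and $C^1$, $\Phi$ is $C^1$ and $\theta$-periodic. By the Leibniz rule,
\[
\Phi'(t) = v(w(t)) - (1-\tau'(t))\,v(w(t-\tau(t))).
\]
The second equation of \eqref{eq : appli augemented system} gives $1-\tau'(t) = \frac{v(w(t))}{v(w(t-\tau(t)))} - \eta$; this quotient is well defined because $v>0$. Substituting,
\[
\Phi'(t) = v(w(t)) - v(w(t)) + \eta\, v(w(t-\tau(t))) = \eta\, v(w(t-\tau(t))).
\]

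Integrate this identity over one period $[0,\theta]$. Periodicity of $\Phi$ gives $0 = \Phi(\theta)-\Phi(0) = \eta \int_0^\theta v(w(t-\tau(t)))\,dt$. The integrand is continuous and strictly positive since $v>0$, so the integral is strictly positive, and therefore $\eta=0$.

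There is no serious obstacle; the key step is spotting the quantity $\Phi$, which is the time-dependent version of the threshold integral. The only things to check are regularity and periodicity. The map $t\mapsto v(w(t))$ is continuous because $v$ is smooth on the range of $w$; positivity $v>0$ is assumed, which also rules out the poles of the Hill function. Periodicity of $\Phi$ follows from the periodicity of $\tau$ and of $v\circ w$. The third equation $a=\int_{-\tau(0)}^0 v(w(s))\,ds$ is not needed for $\eta=0$. With $\eta=0$ it gives $\Phi(0)=a$, and since $\Phi'\equiv 0$ we then get $\Phi\equiv a$, which recovers the threshold condition in \eqref{eq : appli original}.
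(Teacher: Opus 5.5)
Your proposal is correct and follows essentially the same argument as the paper. The paper also differentiates $\int_{t-\tau(t)}^{t} v(w(s))\,ds$ to get $\eta\, v(w(t-\tau(t)))$, then uses periodicity and integration over one period to conclude $\eta=0$ from $v>0$.
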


\begin{proof}
Rearranging the second differential equation of \eqref{eq : appli augemented system}, we get
$$\frac{d}{dt}\left(\int_{t-\tau(t)}^t v(w(s))ds\right)=\eta v(w(t-\tau(t))).$$
Note that $t\mapsto\int_{t-\tau(t)}^{t}v(w(s))ds$ is $\theta$-periodic and $C^1$, since both $w$ and $\tau$ are in $C^1_{per}([0,\theta])$. Integrate from $0$ to $\theta$ both sides of the above equation to get $0=\eta \int_0^\theta v(w(t-\tau(t)))dt$, which implies $\eta=0$, since $v > 0$.
\end{proof}

Starting from~\eqref{eq : appli augemented system}, we introduce notation and further reformulate the system, in order to obtain a suitable $\mathbb{F}=0$ problem to study periodic solutions of~\eqref{eq : appli original}.
First, let $u = (u_1,u_2) \bydef (w,\tau)$. Then, we have that 
\begin{align}
    u'(t) = \mathcal{G}(u_1(t-u_2(t)), u(t)) + \eta e_2 \text{ with }  e_2 \bydef \begin{pmatrix}
        0\\
        1
    \end{pmatrix}\\
     \int_{-u_2(0)}^0 v(u_1(s))ds - a = 0,
\end{align}
where $\mathcal{G} : \R \times \R^2  \to \R^2$ is given as 
\begin{align}
    \mathcal{G}(y,z) \bydef \begin{pmatrix}
        b e^{-\mu z_2} \frac{v(z_1)}{v(y)} q(y) - \gamma z_1\\
        1 - \frac{v(z_1)}{v(y)} 
    \end{pmatrix}.
\end{align}
Following the notations of Section \ref{sec : autonomous SDDE}, we look for $\theta$-periodic solutions, where $\theta$ will have to be determined, and introduce $\omega \bydef \frac{\theta}{2\pi}$. Using the change of variable $t \to \frac{1}{\omega} t$, the above problem becomes
\begin{align}
    u'(t) = {\omega}\mathcal{G}(u_1(t-\frac{u_2(t)}{\omega} ), u(t)) + \omega\eta e_2 \text{ with }  e_2 \bydef \begin{pmatrix}
        0\\
        1
    \end{pmatrix} \text{ for } t \in [0, 2\pi],\\
   \omega \int_{-\frac{u_2(0)}{\omega} }^0 v(u_1(s))ds -  a = 0.
\end{align}
Then, we change $u_2$ to $\omega u_2$ and get
\begin{align}
    u' &=  \mathcal{G}(u_1(t-{u_2} ), u, \omega) + \eta e_2 \text{ with }  \mathcal{G}(y, z, \omega) \bydef \begin{pmatrix}
       \omega b e^{-\mu \omega z_2} \frac{v(z_1)}{v(y)} q(y) -  \omega \gamma z_1\\
        1 - \frac{v(z_1)}{v(y)} 
    \end{pmatrix},\\
    &\omega\int_{-{u_2(0)} }^0 v( u_1(s))ds -  a = 0.
\end{align}
To simplify, we denote 
\begin{align}
    g(\omega,u) \bydef \mathcal{G}(u_1(t-{u_2} ), u, \omega).
\end{align}

\begin{remark}
    In this section, $u : \R \to \R^2$ is vector-valued and the space $C^k_{per}([-\pi,\pi])$, with its associated norm $\|\cdot\|_{C^k}$, has to be understood in the sense of \eqref{def : C0} with $\R^m$ replaced by $\R^2$.  
\end{remark}
Then, following the notations of  Section \ref{sec:forced_toy_model}, we introduce $f :  \R^4 \times C^1_{per}([-\pi,\pi]) \to  C^1_{per}([-\pi,\pi])$ as 
\begin{equation*}
f(x) = f(\beta,u) \bydef 
\displaystyle 
u - \begin{pmatrix}
    \beta_3\\
    \beta_4
\end{pmatrix} - \mathbb{L}_1g(\beta_2, u) - \mathbb{L}_1\beta_1 e_2,
\end{equation*}
with $\beta_1 = \eta$ and $\beta_2 = \omega$, and $\mathbb{L}_1 (\varphi_1,\varphi_2)$ stands for $(\mathbb{L}_1 \varphi_1,\mathbb{L}_1 \varphi_2)$. Now, notice that
\begin{equation*}
    \mathbb{M} \eta e_2 = \frac{1}{2\pi}\int_{-\pi}^\pi \eta e_2  ds = \eta e_2
\end{equation*}
and 
\begin{equation*}
    \left(\mathbb{L}_1 \eta e_2\right)(t)  = \int_{-\pi}^t (\eta e_2 - \mathbb{M}\eta e_2) ds = 0,
\end{equation*}
since $\eta e_2$ is a constant. This implies that 
\begin{equation} \label{eq:the_map appli}
f(x) = 
\displaystyle 
u - \begin{pmatrix}
    \beta_3\\
    \beta_4
\end{pmatrix} - \mathbb{L}_1g(\beta_2, u).
\end{equation}
Now, we introduce $\mathbb{F}$ to incorporate the integral constraint and a phase condition to solve for $\omega$, that is, for the period $\theta = 2\pi\omega$. Indeed, let $\mathbb{F} : \R^4 \times C^1_{per}([-\pi,\pi]) \to \R^4 \times C^1_{per}([-\pi,\pi])$ be defined as 
\begin{align}
    \mathbb{F}(x) \bydef \begin{pmatrix}
        {\xi}(x) \\
        f(x)
    \end{pmatrix}
\end{align}
where we define $\xi : \R^4 \times C^1_{per}([-\pi,\pi]) \to \R^4$ as 
\begin{align}
    {\xi}(x) \bydef \begin{pmatrix}
         \xi_1(\beta,u)\\
         \xi_2(u) \\
         \xi_3(\beta,u)
    \end{pmatrix} = \begin{pmatrix}
 \beta_2 \int_{- u_2(0)}^0 v( u_1(s))ds -a\\      
       (\pi^{\leq N} u_1)(0) - 1\\
         \mathbb{M}g(\beta_2, u) + \beta_1 e_2 
    \end{pmatrix}    
\end{align}
where we recall that $\mathbb{M}$ is the mean value operator defined in \eqref{eq : mean_integral operator} and here is applied component-wise. We prove in the following that zeros of $\mathbb{F}$ provide periodic solutions to \eqref{eq : appli original}.
\begin{lemma}\label{lem : zeros of F threshold}
    Let $x = (\beta, u) \in \R^4 \times C^1_{per}([-\pi,\pi])$ be a zero of $\mathbb{F}$ such that $\beta_2 > 0$ and $v > 0$. Then $\beta_1 = 0$ and $(w,\tau) : s \mapsto \left(u_1\left(\frac{s}{\beta_2}\right), \beta_2 u_2\left(\frac{s}{\beta_2}\right)\right)$ is a $2\pi\beta_2$-periodic solution to \eqref{eq : appli original}.
\end{lemma}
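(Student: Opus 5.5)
We unwind the zero-finding problem component by component, as in Lemma~\ref{lem : zeros of F are solutions}, and then undo the rescalings. From $\xi_3(x)=0$ we get $\mathbb{M}g(\beta_2,u) = -\beta_1 e_2$. Then $f(x)=0$ reads $u = (\beta_3,\beta_4) + \mathbb{L}_1 g(\beta_2,u)$. Differentiating with the first identity in \eqref{eq : id dt versus L} gives
\[
u' = g(\beta_2,u) - \mathbb{M}g(\beta_2,u) = g(\beta_2,u) + \beta_1 e_2 .
\]
Since $\mathbb{L}_1$ maps into $C^1_{per}$, $u$ is a $2\pi$-periodic $C^1$ solution of the rescaled augmented system, with $\eta=\beta_1$ and $\omega=\beta_2$. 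The constraint $\xi_1(x)=0$ gives $\beta_2\int_{-u_2(0)}^0 v(u_1(s))\,ds = a$. The phase condition $\xi_2$ plays no role in the conclusion.

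Next we reverse the two changes of variables. Set $w(s)=u_1(s/\beta_2)$ and $\tau(s)=\beta_2 u_2(s/\beta_2)$; both are $2\pi\beta_2$-periodic and $C^1$, using $\beta_2>0$. The key identity is $w(s-\tau(s)) = u_1\bigl(s/\beta_2 - u_2(s/\beta_2)\bigr)$. The chain rule then turns the first component into $w'(s) = b e^{-\mu\tau(s)}\frac{v(w(s))}{v(w(s-\tau(s)))}q(w(s-\tau(s))) - \gamma w(s)$. Here the factors $\omega$ cancel against $1/\beta_2$ from differentiation, and $e^{-\mu\omega u_2}=e^{-\mu\tau}$. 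Likewise $\tau'(s) = u_2'(s/\beta_2)$ gives $\tau' = 1 - \frac{v(w(s))}{v(w(s-\tau(s)))} + \beta_1$. The change of variables $s\mapsto s/\beta_2$ in the integral turns the constraint into $a=\int_{-\tau(0)}^0 v(w(s))\,ds$. Hence $(w,\tau,\beta_1)$ solves \eqref{eq : appli augemented system} with period $\theta = 2\pi\beta_2$. Since $v>0$ (so the quotients are well defined), Lemma~\ref{lem : eta is zero} yields $\beta_1=\eta=0$.

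It remains to recover the original threshold relation $a = \int_{-\tau(t)}^0 v(w(s+t))\,ds$ for all $t$, not only at $t=0$. With $\eta=0$, the computation in the proof of Lemma~\ref{lem : eta is zero} shows
\[
\frac{d}{dt}\int_{t-\tau(t)}^t v(w(s))\,ds = 0 .
\]
This integral is therefore constant in $t$, and at $t=0$ it equals $a$ by the third equation. The substitution $s \mapsto s+t$ turns it into the threshold relation for all $t$. Together with the $w'$ equation, this shows $(w,\tau)$ solves \eqref{eq : appli original}.

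The main obstacle is bookkeeping rather than analysis. We must check carefully that the rescalings $t\to t/\omega$ and $u_2\to\omega u_2$ are inverted consistently, in particular that the delayed argument and the exponential factor transform correctly. We must also note that $\mathbb{L}_1(\beta_1 e_2)=0$, so that $\eta$ enters only through the mean condition $\xi_3$.
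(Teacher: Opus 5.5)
Your proposal is correct and follows the paper's proof step by step. Combine $f(x)=0$ with $\xi_3(x)=0$ to get $u' = g(\beta_2,u)+\beta_1 e_2$, undo the time and amplitude rescalings using $\xi_1(x)=0$ to obtain a solution of the augmented system, and apply Lemma~\ref{lem : eta is zero} to get $\beta_1=0$. Then use that $t\mapsto\int_{t-\tau(t)}^t v(w(s))\,ds$ is constant and equal to $a$ at $t=0$; the only difference is that you check explicitly the changes of variables (delayed argument, exponential factor, integral constraint) that the paper leaves implicit.
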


\begin{proof}
    Since $f(x) = 0$ and $\xi_3(x) = 0$, we have $u' = g(\beta_2,u) - \mathbb{M}g(\beta_2,u) = g(\beta_2,u) + \beta_1 e_2$, as in the proof of Lemma~\ref{lem : zeros of F are solutions}. Undoing the above amplitude and time rescalings, and using $\xi_1(x) = 0$, we obtain that $(w,\tau,\beta_1)$ is a $2\pi\beta_2$-periodic solution to \eqref{eq : appli augemented system}. Since $v>0$, Lemma~\ref{lem : eta is zero} yields $\beta_1 = 0$. Finally, the proof of Lemma~\ref{lem : eta is zero} shows that $t \mapsto \int_{t-\tau(t)}^{t} v(w(s))ds$ is then constant, and it is equal to $a$ at $t=0$.
\end{proof}

\begin{remark}
    In Section \ref{sec:forced_toy_model}, we introduced $H = \R^m \times C^1_{per}([-\pi,\pi])$ with $C^1_{per}([-\pi,\pi])$ containing vector-valued functions in $\R^m$. In the specific case of this section, the analysis requires extra parameters, namely $\omega$ (equivalently, the period $\theta = 2\pi\omega$) and the unfolding parameter $\eta$. This leads to a larger parameter space $\R^4$, instead of $\R^2$ from the original definition of Section \ref{sec:forced_toy_model}. However, the analysis and estimates derived in Section \ref{sec:forced_toy_model} can still be applied, modulo a rank 2 perturbation.
\end{remark}
Let $H_0 \bydef \R^4 \times C^0_{per}([-\pi,\pi])$ and let $H \bydef \R^4 \times C^1_{per}([-\pi,\pi])$ be endowed with the norm
\begin{align}
    \|x\|_H = \max\{|\beta|_\infty, \|u\|_{C^0},  \|u'\|_{C^0}\}.
\end{align}
Let $\bx$ be the approximate solution represented in Figure~\ref{fig : threshold}, and let $\mathbb{A} : H \to H$ be an approximate inverse for $D\mathbb{F}(\bx)$ such that 
\begin{align}
    \mathbb{A} = \mathbb{A}^{\leq N} + \pi^{>N}.
\end{align}
Finally, we represent in Figure~\ref{fig : threshold} the approximate solution $\bx = (\bar{\beta},\bu)$, with $\bar{u} = (\bw, \btau)$, obtained for the parameters of Figure 5.7 in \cite{MR4959313}, together with the Fourier coefficients of $\bw$ and $\btau$.
\begin{figure}[ht]
\centering
\includegraphics[width=1\textwidth]{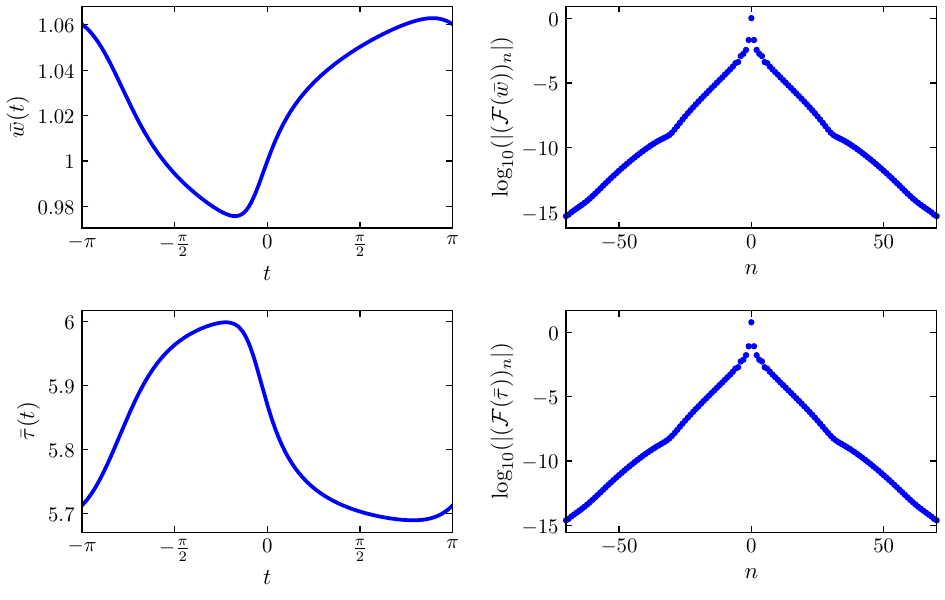}
\caption{Approximate solution $\bar{u} = (\bw, \btau)$ for the parameters of Figure 5.7 in \cite{MR4959313}, that is $a = 3$, $b = 3$, $\mu = 0.3$, $\gamma = 0.13$, $m = 56$, $v^- = 0.5$, $v^+ = 0.2$, $\theta_v = 1$ and $q^- = q^+ = 1$. Left: $\bw$ (top) and the rescaled delay $\btau$ (bottom), so that $\tau(s) \approx \bar{\beta}_2\btau(s/\bar{\beta}_2)$, given as trigonometric polynomials of order $70$. Right: $\log_{10}$ of the absolute value of their Fourier coefficients. The corresponding periodic solution of \eqref{eq : appli original} has period $2\pi\bar{\beta}_2 \approx 11.22$.}
\label{fig : threshold}
\end{figure}

\subsection{Computation of bounds}

\subsubsection{\boldmath$Y$\unboldmath~bound}

The computation of the bound $Y$ requires, with $\bar{\omega} \bydef \bar{\beta}_2$, computing a rigorous enclosure of the Fourier coefficients of 
\begin{align*}
g(\bar \omega, \bu) &= \mathcal{G}(\bw(t-\bar \tau(t)), \bu, \bar \omega) \\
& = 
\begin{pmatrix}
\bar \omega b e^{-\mu \bar\omega \bar \tau} \frac{v(\bw)}{v(\bw(t-\bar \tau(t)))} q(\bw(t-\bar \tau(t))) -  \bar \omega \gamma \bw
\\
1 - \frac{v(\bw)}{v(\bw(t-\bar \tau(t)))} 
\end{pmatrix} = \begin{pmatrix}
    \bar{g}_1\\
    \bar{g}_2
\end{pmatrix}.
\end{align*}
In this case, we cannot follow identically the strategy developed in Section \ref{sec:fourier_coeff_delayed_term} since $q$ and $v$ are not trigonometric polynomials. However, note that if we can prove that $\bar{g}_1$ and $\bar{g}_2$ are analytic in a strip $S_{\bar{\rho}} = \left\{z \in \mathbb{C} : |\mathrm{Im}(z)| \leq \bar{\rho}\right\}$, with $\bar{\rho} >0$, then the computation of the tail estimates \eqref{eq : tail estimates} and the aliasing error \eqref{eq:aliasing_control} will be identical. Consequently, we need to derive a strategy for verifying that the given functions $\bar{g}_1$ and $\bar{g}_2$, defined thanks to the trigonometric polynomial $\bar{u}$, are analytic on $S_{\bar{\rho}}$, for a well-chosen $\bar{\rho}$.

\paragraph{Verification of the analyticity domain}

Recalling the definition of the Hill functions $v$ and $q$, the terms appearing in $\bar{g}_1$ and $\bar{g}_2$ can be written as
\begin{equation}\label{eq : rational terms threshold}
\begin{aligned}
    v(\bw(z)) &= \frac{v^- \theta_v^m + v^+ \bw(z)^m}{\theta_v^m + \bw(z)^m}, \qquad 
    \frac{1}{v(\bw(z-\btau(z)))} = \frac{\theta_v^m + \bw(z-\btau(z))^m}{v^- \theta_v^m + v^+ \bw(z-\btau(z))^m} \\
    q(\bw(z-\btau(z))) &= \frac{q^- \theta_q^n + q^+ \bw(z-\btau(z))^n}{\theta_q^n + \bw(z-\btau(z))^n}.
\end{aligned}
\end{equation}
Since $\bw$ and $\btau$ are trigonometric polynomials, the maps $z \mapsto \bw(z)$, $z \mapsto \bw(z-\btau(z))$ and $z \mapsto e^{-\mu \bar{\omega} \btau(z)}$ are entire. Consequently, $\bar{g}_1$ and $\bar{g}_2$ are analytic on $S_{\bar{\rho}}$ as soon as the three denominators in \eqref{eq : rational terms threshold} do not vanish on $S_{\bar{\rho}}$. Now, the roots of the polynomials $\theta_v^m + y^m$, $v^- \theta_v^m + v^+ y^m$ and $\theta_q^n + y^n$ are explicitly given by
\begin{align}\label{eq : roots denominators threshold}
    \theta_v e^{i \pi \frac{2k+1}{m}}, \quad \theta_v \left(\frac{v^-}{v^+}\right)^{\frac{1}{m}} e^{i \pi \frac{2k+1}{m}}, \quad k = 0, \dots, m-1, \quad \text{ and } \quad \theta_q e^{i \pi \frac{2k+1}{n}}, \quad k = 0, \dots, n-1,
\end{align}
respectively. In particular, since all the parameters are positive, none of these roots is a positive real number. Therefore, we need to verify that $\bw(S_{\bar{\rho}})$ does not contain any root of $\theta_v^m + y^m$, and that $\left\{\bw(z-\btau(z)), ~ z \in S_{\bar{\rho}}\right\}$ does not contain any root of $v^- \theta_v^m + v^+ y^m$ and $\theta_q^n + y^n$. Following \cite[Section 3.5]{vandenberg2026spectral}, this is achieved thanks to the following lemma, which only requires controlling the image of the boundary of the strip.

\begin{lemma}\label{lem : avoiding roots}
Let $h : \C \to \C$ be a $2\pi$-periodic entire function, let $p \in \C$ and let $\partial S_{\bar{\rho}} \bydef \left\{z \in \C, ~ |\mathrm{Im}(z)| = \bar{\rho}\right\}$. Assume that there exists $\phi \in \R$ such that
\begin{align}\label{eq : half plane condition}
    \mathrm{Re}\left(e^{-i\phi}\left(h(z) - p\right)\right) > 0 \quad \text{ for all } z \in \partial S_{\bar{\rho}}.
\end{align}
Then $h(z) \neq p$ for all $z \in S_{\bar{\rho}}$.
\end{lemma}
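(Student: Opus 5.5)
We reduce the statement to a maximum principle for harmonic functions on the strip. Consider $H(z) \bydef e^{-i\phi}(h(z)-p)$, which is entire and $2\pi$-periodic. Its real part $\mathrm{Re}\,H$ is harmonic on $\C$ and $2\pi$-periodic in $\mathrm{Re}(z)$. The aim is to show that $\mathrm{Re}\,H > 0$ on all of $S_{\bar\rho}$, which immediately gives $H(z)\neq 0$, i.e. $h(z)\neq p$ there.

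To get this, we restrict to the compact rectangle $R \bydef [-\pi,\pi]\times[-\bar\rho,\bar\rho]$. By periodicity, the minimum of $\mathrm{Re}\,H$ over $S_{\bar\rho}$ equals its minimum over $R$. It is cleaner to view $\mathrm{Re}\,H$ as a harmonic function on the compact annulus obtained by identifying $\mathrm{Re}(z)=-\pi$ with $\mathrm{Re}(z)=\pi$, equivalently via $\zeta=e^{iz}$. This annulus is $\{e^{-\bar\rho}\le|\zeta|\le e^{\bar\rho}\}$, and its boundary is exactly the image of $\partial S_{\bar\rho}$.

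Since $h$ is $2\pi$-periodic and entire, $H(z)=\tilde H(e^{iz})$ for some $\tilde H$ holomorphic on $\C\setminus\{0\}$. Then $\mathrm{Re}\,\tilde H$ is harmonic on a neighborhood of the closed annulus. By the minimum principle for harmonic functions on a bounded domain, its minimum over the closed annulus is attained on the boundary circles $|\zeta|=e^{\pm\bar\rho}$. Those circles correspond to $\mathrm{Im}(z)=\mp\bar\rho$, where $\mathrm{Re}\,H>0$ by \eqref{eq : half plane condition}. Continuity on the compact boundary makes the boundary minimum attained, hence strictly positive. Therefore $\mathrm{Re}\,H>0$ throughout the annulus, and so on $S_{\bar\rho}$.

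The only point needing care is the handling of the unbounded strip. The periodic identification, or directly the rectangle argument, settles it. In the rectangle version, the vertical sides $\mathrm{Re}(z)=\pm\pi$ are not part of $\partial S_{\bar\rho}$. If the minimum of $\mathrm{Re}\,H$ over $R$ were attained there, periodicity would make it an interior minimum of the same function on a translated rectangle. The strong minimum principle would then force $\mathrm{Re}\,H$ to be constant, and a constant would equal its positive value on $\partial S_{\bar\rho}$. Alternatively, one can run the argument purely topologically: an entire function has isolated zeros, and a winding number computation on the boundary of $R$, where the vertical contributions cancel by periodicity, shows that $H$ has no zeros. The harmonic route is simpler, and it is the one we follow.
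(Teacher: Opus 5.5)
Your proof is correct and takes essentially the paper's route. Both apply the minimum principle to the harmonic, $2\pi$-periodic function $\mathrm{Re}\left(e^{-i\phi}(h-p)\right)$ and use periodicity to reduce to a compact set: the paper notes that the minimum over $S_{\bar\rho}$ is attained and invokes the strong minimum principle if it is interior (exactly your rectangle variant), and your annulus picture via $\zeta = e^{iz}$ is a tidy repackaging of the same compactness argument.
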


\begin{proof}
The map $z \mapsto \mathrm{Re}\left(e^{-i\phi}\left(h(z) - p\right)\right)$ is harmonic on $\C$ and $2\pi$-periodic. In particular, it reaches its minimum on $S_{\bar{\rho}}$ at some point $z_0 \in S_{\bar{\rho}}$. If $z_0 \in \partial S_{\bar{\rho}}$, then this minimum is positive by \eqref{eq : half plane condition}. Otherwise, $z_0$ is an interior point of $S_{\bar{\rho}}$ and the minimum principle for harmonic functions implies that the map is constant on $S_{\bar{\rho}}$, hence positive by \eqref{eq : half plane condition} again. In both cases, we obtain that $\mathrm{Re}\left(e^{-i\phi}\left(h(z) - p\right)\right) > 0$ for all $z \in S_{\bar{\rho}}$, which implies that $h(z) \neq p$.
\end{proof}

We apply Lemma \ref{lem : avoiding roots} to $h = \bw$, with $p$ each root of $\theta_v^m + y^m$, and to $h = \bw(\cdot - \btau(\cdot))$, with $p$ each root of $v^- \theta_v^m + v^+ y^m$ and $\theta_q^n + y^n$. The verification of \eqref{eq : half plane condition} is achieved thanks to rigorous numerics. Specifically, since $\partial S_{\bar{\rho}}$ consists of the two lines $\mathrm{Im}(z) = \pm \bar{\rho}$, we proceed as in Section~\ref{sec:tail_coefficients}. Namely, for each $k=0,\dots,N_{\tt FFT}-1$, the interval vector $\boldsymbol{V}_k$ of Section~\ref{sec:tail_coefficients} (with $\bu = \bw$) encloses $\bw(t - i\bar{\rho} - \btau(t - i \bar{\rho}))$ for all $t \in t_k + \boldsymbol{\delta}$, whereas an enclosure of $\bw(t - i\bar{\rho})$ for all $t \in t_k + \boldsymbol{\delta}$ is obtained exactly as $\boldsymbol{\tau}_k$ in \eqref{eq:definition_Uk}, replacing $\btau$ by $\bw$. Repeating these computations on the line $\mathrm{Im}(z) = \bar{\rho}$, we obtain an enclosure of $h(\partial S_{\bar{\rho}})$ by a finite union of interval vectors. Note that these enclosures are needed anyway for the computation of the constant $C$ in \eqref{eq : tail estimates}. Then, given a root $p$, we first verify that $p$ does not belong to any of these interval vectors, and we compute, for each of them, an interval containing the argument of its elements translated by $-p$. If all these intervals are contained in a common interval of the form $[\phi - \phi_0, \phi + \phi_0]$ with $\phi_0 < \frac{\pi}{2}$, then \eqref{eq : half plane condition} holds since
\begin{align*}
    \mathrm{Re}\left(e^{-i\phi}\left(h(z) - p\right)\right) = |h(z) - p| \cos\left( \arg(h(z)-p) - \phi \right) \quad \text{ for all } z \in \partial S_{\bar{\rho}}.
\end{align*}
The whole process is carried out rigorously using interval arithmetic.

Note that \eqref{eq : half plane condition} is only a sufficient condition to obtain that $p\notin h(S_{\bar{\rho}})$. However, since $h$ is real-valued on the real axis, $h(\partial S_{\bar{\rho}})$ collapses onto the real segment $h(\R) \subset \bw(\R)$ as $\bar{\rho} \to 0$. Hence, as $\bw$ is positive and none of the roots in \eqref{eq : roots denominators threshold} is a positive real number, \eqref{eq : half plane condition} is satisfied for $\bar{\rho}$ small enough. In practice, $\bar{\rho}$ is chosen a priori, and the above verification is performed before computing the tail estimates \eqref{eq : tail estimates}. 

\begin{remark}
The functions $\bar{v}_1, \dots, \bar{v}_4$ introduced in the computation of the bound $Z_1$ below, whose Fourier coefficients also need to be enclosed, are rational functions of $\bw$ and $\bw(\cdot - \btau)$, with coefficients polynomial in $\btau$, $\bw'(\cdot - \btau)$ and $e^{-\mu \bar{\omega} \btau}$, and with denominators given by products of powers of the three denominators in \eqref{eq : rational terms threshold}. Consequently, the above verification also provides their analyticity on $S_{\bar{\rho}}$ (for $\bar{v}_1$ and $\bar{v}_3$, which contain $\frac{v'(y)}{v(y)^2}$ with $y = \bw(\cdot - \btau)$, after the simplification $\frac{v'(y)}{v(y)^2} = \frac{m\theta_v^m(v^+-v^-)y^{m-1}}{(v^-\theta_v^m+v^+y^m)^2}$), as well as that of $v(\bw)$ and $v'(\bw)$.
\end{remark}

\subsubsection{\boldmath$Z_1$\unboldmath~bound}

We first introduce notation for the computation of the Jacobian.
We introduce the following decomposition
\begin{align*}
    D\mathbb{F}(\bx) = \mathbb{I} + \mathbb{D}^{\dagger} D\mathbb{Q}(\bx)
\end{align*}
where 
\begin{align}\label{eq : decomposition of DF}
    \mathbb{D}^\dagger(\beta,u) = (\beta, \mathbb{L}_1 u), ~~ \mathbb{I}(\beta,u) = (0, (u_1-\beta_3, u_2-\beta_4)) ~ \text{ and } ~  \mathbb{Q}(\beta,u) = \begin{pmatrix}
        \xi(x)\\
        -g(\beta_2,u)
    \end{pmatrix}.
\end{align}
In particular, we have that 
\begin{align*}
    D\mathbb{Q}(\bar{x}) = \begin{pmatrix}
    \partial_{\beta}\xi(\bar{x})& \partial_u \xi(\bar{x})\\
     -\partial_{\beta }g(\bar{x}) & -\partial_u g(\bar{x})
    \end{pmatrix}.
\end{align*}

\paragraph{Computation of \boldmath$Dg(\bar{x})$\unboldmath}

In fact we have that 
\begin{align}
   \partial_u g(\bar{\beta}_2,\bu) = \partial_1 \mathcal{G}(\bu_1(t-\bu_2),\bu,\bb_2) \partial_{u} \psi_1(\bu) +    \partial_2 \mathcal{G}(\bu_1(t-\bu_2),\bu,\bb_2).
\end{align}
Here, $\psi_1(u) \bydef u_1(t-u_2)$. Therefore,
\begin{align}
    \partial_u \psi_1(\bu) u = u_1(t-\bu_2) - \bu_1'(t-\bu_2)u_2.
\end{align}
This implies that 
\begin{align}\label{eq : computation d g u}
     \partial_u g(\bar{\beta}_2,\bu)u &= \bar{v}_1(t) u_1(t-\bar{u}_2) + \bar{v}_2(t)u_1(t) + \bar{v}_3u_2(t)\\
     &= \bar{v}_1(t) (\mathbb{L}_3(\bar{u}_2) u_1)(t) + \bar{v}_2(t)u_1(t) + \bar{v}_3u_2(t)
\end{align}
where 
\begin{align*}
    \bar{v}_1 &= \partial_1 \mathcal{G}(\bu_1(t-\bu_2),\bu,\bb_2)\\
    \bar{v}_2 &= \partial_2 \mathcal{G}(\bu_1(t-\bu_2),\bu,\bb_2)_{col(1)}\\
    \bar{v}_3 &= -\bar{v}_1\bar{u}'_1(t-\bar{u}_2) + \partial_2 \mathcal{G}(\bu_1(t-\bu_2),\bu,\bb_2)_{col(2)}\\
    \mathbb{L}_3(\bar{u}_2) u_1 &= u_1(t-\bar{u}_2).
\end{align*}
In particular the Fourier series representation of $\mathbb{L}_3(\bar{u}_2)$ is given by \eqref{def : operator L3}, with $1+\varepsilon_2\bar{u}$ replaced by $\bar{u}_2$. Then, we get
\begin{align*}
    \partial_1 \mathcal{G}(y,z,\omega) = \begin{pmatrix}
       \omega b e^{-\mu \omega z_2}  \frac{ v(z_1)}{v(y)} q'(y)  - \omega b e^{-\mu \omega z_2}  \frac{v'(y) v(z_1)}{v(y)^2} q(y)\\
         \frac{v'(y) v(z_1)}{v(y)^2}
    \end{pmatrix}
\end{align*}
and
\begin{align*}
    \partial_2 \mathcal{G}(y,z,\omega) = \begin{pmatrix}
         \omega b e^{-\mu \omega z_2}  \frac{ v'(z_1)}{v(y)} q(y) - \omega \gamma  & - \mu  \omega^2 b e^{-\mu \omega z_2}  \frac{ v(z_1)}{v(y)} q(y)\\
         -\frac{ v'(z_1)}{v(y)} & 0
    \end{pmatrix}.
\end{align*}

Then we have that 
\begin{align}\label{def : bar v4}
    \partial_\beta g(\bar{x}) \beta = \partial_{\beta_2} g(\bar{x})\beta_2  =  \bar{v}_4 \beta_2, ~~ \text{ where } \bar{v}_4 \bydef \partial_3 \mathcal{G}(\bu_1(t-\bu_2),\bu,\bb_2).
\end{align}
In fact, we have that 
\begin{align*}
    \partial_3 \mathcal{G}(y,z,\omega) = \begin{pmatrix}
        be^{-\mu \omega z_2}\frac{v(z_1)}{v(y)}q(y)\left(1 - \omega \mu z_2\right) - \gamma z_1\\
        0
    \end{pmatrix}.
\end{align*}

\paragraph{Computation of \boldmath$D\xi(\bar{x})$\unboldmath}

Now, we have
\begin{equation}\label{eq : computation d xi u}
    \begin{aligned}
    \partial_{u} \xi_1(\bx) u &= \bar{\beta}_2\int^0_{-\bu_2(0)} v'(\bu_1(s)) u_1(s) ds +  \bar{\beta}_2 v(\bu_1(-\bu_2(0)))u_2(0)\\
    \partial_u \xi_2(\bar{x}) u &= (\pi^{\leq N}u_1)(0)\\
    \partial_u \xi_3(\bx)u &= \mathbb{M}\partial_u g(\bar{x})u
    \end{aligned}
\end{equation}

Given $h \in C^1_{per}([-\pi,\pi])$, we have that $\int^0_{-\bu_2(0)} h(s) ds = \sum_{n \in \mathbb{Z}} (\mathcal{F}(h))_n \vartheta_n$, where
\begin{align*}
    \vartheta_n \bydef \int_{-\bar{u}_2(0)}^{0} e^{ins}ds  =  \frac{1- e^{-in\bar{u}_2(0)}}{i n} ~ \text{ for } n \neq 0, \quad \text{ and } \quad \vartheta_0 = \bu_2(0).
\end{align*}
Applying this identity to $h = v'(\bu_1)u_1$, it remains to compute the Fourier coefficients of $v'(\bu_1)u_1$, that is, the multiplication operator by $v'(\bu_1)$ in Fourier coefficients.

On the other hand, we have 
\begin{align*}
    \partial_\beta \xi_1(\bar{x})\beta &=  \beta_2 \int_{- \bar{u}_2(0)}^0 v( \bar{u}_1(s))ds\\
    \partial_\beta \xi_2(\bar{x}) &= 0\\
    \partial_{\beta} \xi_3(\bar{x})\beta &= \mathbb{M} \partial_{\beta_2}g(\bar{x})\beta_2 + \beta_1 e_2
\end{align*}

For the parameters of Figure~\ref{fig : threshold}, estimating $\pi^{\leq N}(I - \mathbb{A}D\mathbb{F}(\bx))\pi^{>N}$ as in Lemma \ref{lem : Z1 forcing}, that is, bounding separately $\|\mathbb{A}^{\leq N}\mathbb{D}^\dagger\|_{B(H_0)}$ and $\|D\mathbb{Q}(\bx)\pi^{>N}\|_{B(H,H_0)}$, is too pessimistic. Instead, given $N_2 > N$, we split $\pi^{>N} = \pi^{(N,N_2]} + \pi^{>N_2}$, where $\pi^{(N,N_2]} \bydef \pi^{\leq N_2} - \pi^{\leq N}$. The operator $\mathbb{A}^{\leq N}D\mathbb{F}(\bx)\pi^{(N,N_2]}$ is computed explicitly, and for the modes $|k| > N_2$, we use that the delayed term $u_1(t-\bu_2(t))$ only transfers them to the modes $|n| \leq N$ with an exponentially small factor. This is the purpose of the next lemma.

\begin{lemma}\label{lem : frequency gap}
    Let $\bar{\rho} > 0$, let $\bar{\nu} \bydef e^{\bar{\rho}}$ and let $\phi$ be a $2\pi$-periodic function, analytic on $S_{\bar{\rho}}$. Moreover, let $C_\phi \geq \frac{1}{2\pi}\int_{-\pi}^{\pi}|\phi(t \pm i\bar{\rho})|dt$ for both signs, and let $\lambda > 0$ be such that
    \begin{align}\label{eq : condition lambda}
        \mathrm{Im}(\bu_2(t+i\bar{\rho})) \leq (1-\lambda)\bar{\rho} \quad \text{ for all } t \in [-\pi,\pi].
    \end{align}
    Then $\left|\left(\mathcal{F}\left(\phi e^{ik(\cdot - \bu_2)}\right)\right)_n\right| \leq C_\phi \bar{\nu}^{|n|-\lambda|k|}$ for all $n,k \in \Z$.
\end{lemma}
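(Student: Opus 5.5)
The plan is to shift the line of integration in the Fourier integral, as for the Paley--Wiener estimate \eqref{eq : tail estimates}, choosing the direction of the shift according to the sign of $k$ rather than of $n$. Write
\[
\left(\mathcal{F}\left(\phi e^{ik(\cdot - \bu_2)}\right)\right)_n = \frac{1}{2\pi}\int_{-\pi}^{\pi} \phi(t)\, e^{ik(t-\bu_2(t))} e^{-int}\,dt .
\]
The integrand extends analytically to $S_{\bar\rho}$ in $t$, because $\phi$ is analytic there and $\bu_2$ is a trigonometric polynomial, hence entire. It is also $2\pi$-periodic, since $e^{ikt}$ and $e^{-int}$ are. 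Integrating over the boundary of the rectangle $[-\pi,\pi]\times[0,\sigma]$ for $\sigma \in \{\bar\rho,-\bar\rho\}$, the two vertical sides cancel by periodicity. Cauchy's theorem therefore lets us replace $t$ by $t+i\sigma$, $t\in[-\pi,\pi]$, in the integral.

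On the shifted line we have $|e^{-in(t+i\sigma)}| = e^{n\sigma} \le \bar\nu^{|n|}$ for either sign of $\sigma$. Moreover
\[
\left|e^{ik(t+i\sigma-\bu_2(t+i\sigma))}\right| = e^{-k\sigma + k\,\mathrm{Im}(\bu_2(t+i\sigma))}.
\]
\textbf{Case $k \ge 0$.} Take $\sigma=\bar\rho$. Assumption \eqref{eq : condition lambda} gives an exponent of at most $-k\bar\rho + k(1-\lambda)\bar\rho = -\lambda k\bar\rho$.

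\textbf{Case $k<0$.} Take $\sigma=-\bar\rho$. Since $\bu_2$ is real-valued on $\R$ (its coefficients satisfy the conjugate symmetry of \eqref{eq:bu_btau}), we have $\bu_2(t-i\bar\rho) = \bu_2(t+i\bar\rho)^*$. Hence $\mathrm{Im}(\bu_2(t-i\bar\rho)) = -\mathrm{Im}(\bu_2(t+i\bar\rho))$. The exponent is then $|k|\bar\rho - |k|\,\mathrm{Im}(\bu_2(t+i\bar\rho))\cdot(-1)\cdot(-1)$, which reduces to $-|k|\bar\rho + |k|\,\mathrm{Im}(\bu_2(t+i\bar\rho)) \le -\lambda|k|\bar\rho$, again by \eqref{eq : condition lambda}. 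The condition on $t\in[-\pi,\pi]$ covers the whole line by periodicity.

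In both cases the modulus of the integrand on the chosen line is at most $|\phi(t+i\sigma)|\,\bar\nu^{|n|}\bar\nu^{-\lambda|k|}$. Taking the mean over $[-\pi,\pi]$ and using the definition of $C_\phi$ for the corresponding sign yields $C_\phi \bar\nu^{|n|-\lambda|k|}$.

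The only delicate point is that the sign of the shift must be dictated by $k$, not $n$. The factor $e^{-in t}$ then costs at most $\bar\nu^{|n|}$, while the gain comes from $e^{ik(t-\bu_2)}$. Making this work for negative $k$ requires the conjugation symmetry of $\bu_2$, because \eqref{eq : condition lambda} is stated only on the upper line.
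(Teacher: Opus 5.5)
Your proof is correct and is essentially the paper's argument: you shift the contour to $\mathrm{Im}(z)=\bar\rho$ when $k\ge 0$ and to $\mathrm{Im}(z)=-\bar\rho$ when $k<0$, using $\mathrm{Im}(\bu_2(t-i\bar\rho))=-\mathrm{Im}(\bu_2(t+i\bar\rho))$ in the latter case. One cosmetic slip: in the $k<0$ case your intermediate exponent should begin with $-|k|\bar\rho$ (since $-k\sigma=-|k|\bar\rho$ there), not $|k|\bar\rho$, though the reduced expression $-|k|\bar\rho+|k|\,\mathrm{Im}(\bu_2(t+i\bar\rho))$ you arrive at is right.
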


\begin{proof}
    Let $n,k \in \Z$ with $k \geq 0$. Since $\bu_2$ is a trigonometric polynomial, $\phi e^{ik(\cdot - \bu_2)}$ is analytic on $S_{\bar{\rho}}$, and shifting the line of integration as in Section \ref{sec:tail_coefficients}, we get
    \begin{align*}
        \left(\mathcal{F}\left(\phi e^{ik(\cdot - \bu_2)}\right)\right)_n = \frac{1}{2\pi}\int_{-\pi}^{\pi} \phi(t+i\bar{\rho})\, e^{ik(t+i\bar{\rho} - \bu_2(t+i\bar{\rho}))}\, e^{-in(t+i\bar{\rho})} dt.
    \end{align*}
    Now, \eqref{eq : condition lambda} yields $|e^{ik(t+i\bar{\rho} - \bu_2(t+i\bar{\rho}))}| = e^{-k(\bar{\rho} - \mathrm{Im}(\bu_2(t+i\bar{\rho})))} \leq \bar{\nu}^{-\lambda k}$, and $|e^{-in(t+i\bar{\rho})}| = e^{n\bar{\rho}} \leq \bar{\nu}^{|n|}$, which proves the result for $k \geq 0$. If $k < 0$, we proceed identically with the line $\mathrm{Im}(z) = -\bar{\rho}$, using that $\mathrm{Im}(\bu_2(t-i\bar{\rho})) = -\mathrm{Im}(\bu_2(t+i\bar{\rho}))$ since $\bu_2$ is real-valued.
\end{proof}

\begin{lemma}\label{lem : Z1 appli}
    Let $N_2 > N$, let $\bar{\rho} > 0$ be such that $\bar{v}_1, \bar{v}_2, \bar{v}_3$ are analytic on $S_{\bar{\rho}}$ and  let $\bar{\nu} \bydef e^{\bar{\rho}}$. Now, let $\lambda > 0$ satisfy \eqref{eq : condition lambda} and let $C_1,C_2,C_3$ be such that $\max_{i = 1,2}\frac{1}{2\pi}\int_{-\pi}^{\pi}|(\bar v_j)_i(t \pm i\bar{\rho})|dt \leq C_j$ for both signs, $j \in \{1,2,3\}$. Moreover, let
    \begin{align*}
        \mathcal{E} &\bydef \frac{4\bar{\nu}^{N+1}}{(\bar{\nu}-1)(N_2+1)}\left( \frac{C_1\bar{\nu}^{-\lambda(N_2+1)}}{1-\bar{\nu}^{-\lambda}} + \frac{(C_2+C_3)\bar{\nu}^{-N_2}}{\bar{\nu}-1} \right),\\
        \mathcal{X} &\bydef |\bar{\beta}_2|\left( \left( 2\|v'(\bu_1)\|_{C^0} + |\bu_2(0)|\, \|v''(\bu_1)\|_{C^0} \|\bu_1'\|_{C^0} \right)\theta_{N_2}^2 + |v(\bu_1(-\bu_2(0)))|\,\theta_{N_2} \right),
    \end{align*}
    and let $\mathcal{A} \geq \max\{\|\mathbb{A}^{\leq N}y\|_{H_0}, \|\mathbb{D}\mathbb{A}^{\leq N}y\|_{H_0}\}$ for all $y = (\beta,c) \in \R^4 \times C^0_{per}([-\pi,\pi])$ such that $c = (c_1,c_2)$ is constant, $\beta_2 = 0$ and $|\beta_1| + |\beta_3| + |\beta_4| + |c_1| + |c_2| \leq 1$. Finally, let $Z_{1,0}$ be as in \eqref{eq : components Z1} and let $Z_{1,1}$ and $Z_{1,2}$ be bounds satisfying
    \begin{equation}\label{eq : components Z1 appli}
        \begin{aligned}
            Z_{1,1} &\geq \max\left\{ \| \mathbb{A}^{\leq N}D\mathbb{F}(\bx)\pi^{(N,N_2]}\mathbb{D}^\dagger\|_{B(H_0)}, ~ \| \mathbb{D} \mathbb{A}^{\leq N}D\mathbb{F}(\bx)\pi^{(N,N_2]}\mathbb{D}^\dagger\|_{B(H_0)} \right\}\\
            & \qquad + \max\left\{ \| \mathbb{A}^{\leq N}\mathbb{D}^\dagger\|_{B(H_0)}, ~ \| \mathbb{D} \mathbb{A}^{\leq N}\mathbb{D}^\dagger\|_{B(H_0)} \right\}\mathcal{E}\\ & \qquad + \mathcal{A}\left( \mathcal{X} + 2\mathcal{E} + 2\left(\|\bar{v}_1\|_{C^0} + \|\bar{v}_2\|_{C^0} + \|\bar{v}_3\|_{C^0}\right)\theta_N\theta_{N_2} \right)\\
            Z_{1,2} & \geq \|\pi^{>N} \bar{v}_4\Vert_{C^0} + \left(\|(1-\bar{u}_2')\bar{v}_1\|_{C^0} + \|\bar{v}_2\|_{C^0} + \|\bar{v}_3\|_{C^0} + \|\bar{v}_1'\|_{C^0} + \|\bar{v}_2'\|_{C^0} + \|\bar{v}_3'\|_{C^0}\right) \theta_N.
        \end{aligned}
    \end{equation}
    Then, defining $Z_1 = Z_{1,0}  + Z_{1,1} + Z_{1,2}$, we have that $\|I - \mathbb{A} D\mathbb{F}(\bar{x})\|_{B(H)} \leq Z_1.$
\end{lemma}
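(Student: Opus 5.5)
We follow the structure of the proof of Lemma~\ref{lem : Z1 forcing}. For $x=(\beta,u)\in H$ with $\|x\|_H\le 1$, we use $I=\pi^{\le N}+\pi^{>N}$ to split $(I-\mathbb{A}D\mathbb{F}(\bx))x$ into three terms. The first is $\pi^{\le N}(I-\mathbb{A}D\mathbb{F}(\bx))\pi^{\le N}x$, which is bounded by $Z_{1,0}$ by definition. The second is $\pi^{\le N}(I-\mathbb{A}D\mathbb{F}(\bx))\pi^{>N}x=-\mathbb{A}^{\le N}D\mathbb{F}(\bx)\pi^{>N}x$. The third is $\pi^{>N}(I-\mathbb{A}D\mathbb{F}(\bx))x=-\pi^{>N}\mathbb{D}^\dagger D\mathbb{Q}(\bx)x$. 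Each term is estimated in both the $H_0$ norm and the $H_1$ seminorm through \eqref{eq : identity norm H with H0 H1}.

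\textbf{Third term ($Z_{1,2}$).} The $u$-part of $D\mathbb{Q}(\bx)x$ equals $-\bar v_4\beta_2-\partial_u g(\bx)u$. The fixed function $\bar v_4$ contributes $\|\pi^{>N}\bar v_4\|_{C^0}$, exactly as in \eqref{eq : Z12 autonomous}. For the remaining part, since $\mathbb{D}\pi^{>N}\mathbb{D}^\dagger=\pi^{>N}$, Lemma~\ref{lem:piNC0C1} gives $\theta_N\|\partial_t[\partial_u g(\bx)u]\|_{C^0}$. Differentiating \eqref{eq : computation d g u} yields the terms $\bar v_1(1-\bu_2')u_1'(t-\bu_2)$, $\bar v_1'u_1(t-\bu_2)$, $\bar v_2u_1'$, $\bar v_2'u_1$, $\bar v_3u_2'$ and $\bar v_3'u_2$, which produce the stated bound. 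The $H_0$ part gains the extra factor $\theta_N\le 1$, as in Lemma~\ref{lem : Z1 forcing}.

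\textbf{Second term ($Z_{1,1}$).} We write $\pi^{>N}=\pi^{(N,N_2]}+\pi^{>N_2}$. The contribution of $\pi^{(N,N_2]}$ is treated by expressing $\pi^{(N,N_2]}x=\mathbb{D}^\dagger\mathbb{D}\pi^{(N,N_2]}x$ up to a constant term. The operator $\mathbb{A}^{\le N}D\mathbb{F}(\bx)\pi^{(N,N_2]}\mathbb{D}^\dagger$ is finite-dimensional and computable, and its action on $\mathbb{D}x$ has size at most $\|u'\|\le1$. The constant that is split off lies in the range covered by $\mathcal{A}$.

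For the modes above $N_2$, we decompose $D\mathbb{F}(\bx)\pi^{>N_2}x=\mathbb{I}\pi^{>N_2}x+\mathbb{D}^\dagger D\mathbb{Q}(\bx)\pi^{>N_2}x$. The identity part $\mathbb{I}\pi^{>N_2}$ is killed by $\mathbb{A}^{\le N}$, as before.

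Inside $D\mathbb{Q}(\bx)\pi^{>N_2}x$, we need $\pi^{\le N}$ of $\partial_u g(\bx)\pi^{>N_2}u$. The parts $\bar v_2\pi^{>N_2}u_1$ and $\bar v_3\pi^{>N_2}u_2$ only reach modes $|n|\le N$ through coefficients of $\bar v_2,\bar v_3$ of order at least $N_2-N$. These are bounded by $C_{2,3}\bar\nu^{-(|k|-|n|)}$ from \eqref{eq : tail estimates}. We combine this with $|\mathcal{F}(u)_k|\le\|u'\|_{C^0}/|k|\le 1/(N_2+1)$ for $|k|>N_2$ and sum over $k$ and $n$, which gives the $(C_2+C_3)$ piece of $\mathcal{E}$. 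The delayed part $\bar v_1u_1(t-\bu_2)$ has $(n,k)$ entry $\mathcal{F}(\bar v_1e^{ik(\cdot-\bu_2)})_n$, and Lemma~\ref{lem : frequency gap} bounds it by $C_1\bar\nu^{|n|-\lambda|k|}$. Summing as before gives the $C_1$ piece; the factor $4$ comes from both signs of $n$ and $k$.

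The finite-dimensional part of the output is then controlled by $\max\{\|\mathbb{A}^{\le N}\mathbb{D}^\dagger\|,\|\mathbb{D}\mathbb{A}^{\le N}\mathbb{D}^\dagger\|\}\,\mathcal{E}$. The zeroth-mode effect of $\mathbb{L}_1$, namely $\pi^{\le N}\mathbb{L}_1\pi^{>N}$, which is constant by Lemma~\ref{lem:piNL1norm}, enters through $\mathcal{A}$ with the terms $2\mathcal{E}$ and $2(\sum\|\bar v_i\|)\theta_N\theta_{N_2}$. For the latter we use $\|\pi^{>N}\mathbb{L}_1\|\le\theta_N$ and $\|\pi^{>N_2}u\|_{C^0}\le\theta_{N_2}$.

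Finally, the scalar components $\partial_u\xi_1$ and $\partial_u\xi_3$ applied to $\pi^{>N_2}u$ produce scalars, which are absorbed through $\mathcal{A}$; note that $\xi_2$ vanishes on $\pi^{>N}$. For $\partial_u\xi_1$, the integral term $\int_{-\bu_2(0)}^0 v'(\bu_1)\pi^{>N_2}u_1$ is written as $\int_{-\bu_2(0)}^0 v'(\bu_1)(\mathbb{L}_1\text{-type antiderivative})'$ and integrated by parts. This gives boundary terms $2\|v'(\bu_1)\|\theta_{N_2}^2$ (using Lemma~\ref{lem:piNC0C1} twice) and an interior term $|\bu_2(0)|\|v''(\bu_1)\bu_1'\|\theta_{N_2}^2$. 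The point-evaluation term contributes $|v(\bu_1(-\bu_2(0)))|\theta_{N_2}$. Together these give $\mathcal{X}$.

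The main obstacle is the bookkeeping of the tail sums over $|k|>N_2$ and $|n|\le N$ to recover exactly the constant in $\mathcal{E}$. It also requires checking that every constant-valued leakage of modes above $N$ into the finite block is captured by $\mathcal{A}$ with the correct weights.
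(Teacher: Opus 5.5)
Your proposal follows the paper's proof essentially step by step. It uses the three-way split inherited from Lemma~\ref{lem : Z1 forcing} and the splitting $\pi^{>N}=\pi^{(N,N_2]}+\pi^{>N_2}$. It obtains $\mathcal{E}$ from Lemma~\ref{lem : frequency gap} and \eqref{eq : tail estimates} together with $|(\mathcal{F}(h_i))_k|\le 1/|k|$. It writes $\pi^{\le N}\mathbb{L}_1 f=\mathbb{L}_1\pi^{\le N}f+c$ with $c$ constant, and gets $\mathcal{X}$ by integrating by parts against $p=\pi^{>N_2}\mathbb{L}_1 h_1$. Here $x=(\alpha,h)$, $u\bydef\pi^{>N_2}h$ and $f\bydef\partial_u g(\bx)u$. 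Two points of the bookkeeping you left open need correcting, though neither requires a new idea.

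\textbf{The $(N,N_2]$ block.} Nothing is split off here. Since $\mathbb{D}^\dagger\mathbb{D}(\alpha,h)=(\alpha,h-h(-\pi))$ and $\pi^{(N,N_2]}$ annihilates both the $\R^4$ component and constants, you have exactly $\pi^{(N,N_2]}x=\pi^{(N,N_2]}\mathbb{D}^\dagger\mathbb{D}x$. The projection must act after $\mathbb{D}^\dagger\mathbb{D}$, not before. The operator you name, $\mathbb{A}^{\le N}D\mathbb{F}(\bx)\pi^{(N,N_2]}\mathbb{D}^\dagger$, is the right one, and applied to $\mathbb{D}x$ it reproduces $\mathbb{A}^{\le N}D\mathbb{F}(\bx)\pi^{(N,N_2]}x$ exactly. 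Your claim that a leftover constant $k$ would be covered by $\mathcal{A}$ is not correct. It would enter as $\mathbb{A}^{\le N}D\mathbb{F}(\bx)(0,k)$, whose argument has the nonzero slot $\partial_u\xi_2(\bx)k=k_1$ and a non-constant function part. That argument lies outside the class of $y$ in the definition of $\mathcal{A}$.

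\textbf{The $2\mathcal{E}$ term.} The $2\mathcal{E}$ inside $\mathcal{A}(\cdots)$ is not a zero-mode effect of $\mathbb{L}_1$. It is the bound $|\mathbb{M}f|_1=|\partial_u\xi_3(\bx)u|_1\le 2\mathcal{E}$ for the two $\xi_3$ components, which is the $n=0$ case of your coefficient estimate. The constant $c=\pi^{\le N}\mathbb{L}_1\pi^{>N}f$ contributes only $|c|_1\le 2(\|\bar v_1\|_{C^0}+\|\bar v_2\|_{C^0}+\|\bar v_3\|_{C^0})\theta_N\theta_{N_2}$. Collect $y=(\partial_u\xi_1(\bx)u,0,\mathbb{M}f,-c)$ into a single vector and use the $\ell^1$ normalization in the definition of $\mathcal{A}$. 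This gives exactly the stated $Z_{1,1}$. With your attribution, the $\xi_3$ contribution would be counted on top of $2\mathcal{E}$, and the stated bound would not follow as written.
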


\begin{proof}
Recall that $H_0 = \R^4 \times C^0_{per}([-\pi,\pi])$ is equipped with the norm $\|(\beta,u)\|_{H_0} = \max\{|\beta|_\infty, \|u\|_{C^0}\}$, let $\mathbb{D}(\beta,u) = (\beta,u')$ and let $\|\cdot\|_{H_1}$ be defined as in Section \ref{sec:zero_finding_problem_toy}, so that \eqref{eq : identity norm H with H0 H1} and \eqref{eq : identity norm H1 and D} hold in the present setting as well. Using \eqref{eq : decomposition of DF}, we still have $\mathbb{I}\pi^{>N} = \pi^{>N}$ and $\mathbb{A}^{\leq N}\mathbb{I}\pi^{>N} = 0$, so that the proof of Lemma \ref{lem : Z1 forcing} applies verbatim, except for the estimates of $\pi^{\leq N}(I - \mathbb{A}D\mathbb{F}(\bx))\pi^{>N} = -\mathbb{A}^{\leq N}D\mathbb{F}(\bx)\pi^{>N}$ and of the model-dependent quantity $\|\pi^{>N} D\mathbb{Q}(\bar{x})x\|_{H_0}$, appearing in $Z_{1,2}$. More specifically, it remains to prove that
\begin{align}\label{eq : Z11 appli goal}
    \|\mathbb{A}^{\leq N}D\mathbb{F}(\bx)\pi^{>N}x\|_{H_0} \leq Z_{1,1} ~ \text{ and } ~ \|\mathbb{D}\mathbb{A}^{\leq N}D\mathbb{F}(\bx)\pi^{>N}x\|_{H_0} \leq Z_{1,1} \quad \text{ for all } x \in B_1(0) \subset H,
\end{align}
and to estimate $\|\pi^{>N} D\mathbb{Q}(\bar{x})x\|_{H_0}$. Moreover, as in Section \ref{sec : autonomous SDDE}, the map $g$ now also depends on a parameter, and the term $\|\pi^{>N}\bar{v}_4\|_{C^0}$ in $Z_{1,2}$ arises exactly as in \eqref{eq : Z12 autonomous}.

Let $x = (\alpha, h) \in B_1(0) \subset H$. We only prove the first inequality in \eqref{eq : Z11 appli goal}, the second one being obtained identically, replacing $\mathbb{A}^{\leq N}$ by $\mathbb{D}\mathbb{A}^{\leq N}$. First, since $\mathbb{D}^\dagger\mathbb{D}x = (\alpha, h - h(-\pi))$, we have $\pi^{(N,N_2]}x = \pi^{(N,N_2]}\mathbb{D}^\dagger\mathbb{D}x$, and $\|\mathbb{D}x\|_{H_0} \leq \|x\|_H \leq 1$, so that
\begin{align*}
    \|\mathbb{A}^{\leq N}D\mathbb{F}(\bx)\pi^{(N,N_2]}x\|_{H_0} \leq \|\mathbb{A}^{\leq N}D\mathbb{F}(\bx)\pi^{(N,N_2]}\mathbb{D}^\dagger\|_{B(H_0)}.
\end{align*}
Now, let $u \bydef \pi^{>N_2}h$ and $f \bydef \partial_u g(\bx)u$. Since $\mathbb{A}^{\leq N}\mathbb{I}\pi^{>N_2} = 0$ and $\partial_u\xi_2(\bx)\pi^{>N_2} = 0$, we have, using \eqref{eq : decomposition of DF},
\begin{align*}
    \mathbb{A}^{\leq N}D\mathbb{F}(\bx)\pi^{>N_2}x = \mathbb{A}^{\leq N}\mathbb{D}^\dagger D\mathbb{Q}(\bx)\pi^{>N_2}x = \mathbb{A}^{\leq N}\left(\partial_u\xi_1(\bx)u, 0, \mathbb{M}f, -\pi^{\leq N}\mathbb{L}_1 f\right).
\end{align*}
Moreover, $\pi^{\leq N}\mathbb{L}_1 f = \mathbb{L}_1\pi^{\leq N}f + c$, where $c \bydef (\mathcal{F}(\mathbb{L}_1\pi^{>N}f))_0 \in \R^2$. Therefore, we obtain that
\begin{align*}
    \mathbb{A}^{\leq N}D\mathbb{F}(\bx)\pi^{>N_2}x = \mathbb{A}^{\leq N}y - \mathbb{A}^{\leq N}\mathbb{D}^\dagger(0,\pi^{\leq N}f), \quad \text{ where } y \bydef (\partial_u\xi_1(\bx)u, 0, \mathbb{M}f, -c),
\end{align*}
and consequently
\begin{align}\label{eq : Z11 appli far}
    \|\mathbb{A}^{\leq N}D\mathbb{F}(\bx)\pi^{>N_2}x\|_{H_0} \leq \mathcal{A}\left( |\partial_u\xi_1(\bx)u| + |\mathbb{M}f|_1 + |c|_1 \right) + \|\mathbb{A}^{\leq N}\mathbb{D}^\dagger\|_{B(H_0)}\|\pi^{\leq N}f\|_{C^0}.
\end{align}
We now estimate each term of \eqref{eq : Z11 appli far}. Since $h \in C^1_{per}([-\pi,\pi])$, its Fourier series converges absolutely and $|(\mathcal{F}(h_i))_k| \leq \frac{\|h_i'\|_{C^0}}{|k|} \leq \frac{1}{|k|}$ for all $k \neq 0$. Therefore, using \eqref{eq : computation d g u}, we have
\begin{align*}
    (\mathcal{F}(f))_n = \sum_{|k| > N_2} \Big[ (\mathcal{F}(h_1))_k \left( \left(\mathcal{F}\left(\bar{v}_1 e^{ik(\cdot - \bu_2)}\right)\right)_n + (\mathcal{F}(\bar{v}_2))_{n-k} \right) + (\mathcal{F}(h_2))_k (\mathcal{F}(\bar{v}_3))_{n-k} \Big]
\end{align*}
for all $n \in \Z$. Using Lemma \ref{lem : frequency gap} for the first term and \eqref{eq : tail estimates} for the other two, we get, component-wise,
\begin{align*}
    |(\mathcal{F}(f))_n| \leq \sum_{|k| > N_2} \frac{\bar{\nu}^{|n|}}{|k|}\left( C_1\bar{\nu}^{-\lambda|k|} + (C_2+C_3)\bar{\nu}^{-|k|} \right) \leq \frac{2\bar{\nu}^{|n|}}{N_2+1}\left( \frac{C_1\bar{\nu}^{-\lambda(N_2+1)}}{1-\bar{\nu}^{-\lambda}} + \frac{(C_2+C_3)\bar{\nu}^{-N_2}}{\bar{\nu}-1} \right).
\end{align*}
Since $\sum_{|n| \leq N}\bar{\nu}^{|n|} \leq \frac{2\bar{\nu}^{N+1}}{\bar{\nu}-1}$, this implies that $\sum_{|n| \leq N}|(\mathcal{F}(f_i))_n| \leq \mathcal{E}$ for $i \in \{1,2\}$. In particular, $\|\pi^{\leq N}f\|_{C^0} \leq \mathcal{E}$ and $|\mathbb{M}f|_1 = |(\mathcal{F}(f_1))_0| + |(\mathcal{F}(f_2))_0| \leq 2\mathcal{E}$. Then, \eqref{eq : computation d g u} and \eqref{def : Kolmogorov constant} provide $\|f\|_{C^0} \leq \left(\|\bar{v}_1\|_{C^0} + \|\bar{v}_2\|_{C^0} + \|\bar{v}_3\|_{C^0}\right)\|u\|_{C^0} \leq \left(\|\bar{v}_1\|_{C^0} + \|\bar{v}_2\|_{C^0} + \|\bar{v}_3\|_{C^0}\right)\theta_{N_2}$, and since $|c_i| = |(\pi^{>N}\mathbb{L}_1 f_i)(\pi)| \leq \theta_N\|f_i\|_{C^0}$ by~\eqref{eq:piNL1piNcst} and~\eqref{eq:piNL1}, we get $|c|_1 \leq 2\left(\|\bar{v}_1\|_{C^0} + \|\bar{v}_2\|_{C^0} + \|\bar{v}_3\|_{C^0}\right)\theta_N\theta_{N_2}$. Finally, let $p \bydef \pi^{>N_2}\mathbb{L}_1 h_1$. Using \eqref{eq : id dt versus L}, we have $p' = u_1$, and using \eqref{def : Kolmogorov constant} twice, $\|p\|_{C^0} \leq \theta_{N_2}\|u_1\|_{C^0} \leq \theta_{N_2}^2$. Then, an integration by parts in \eqref{eq : computation d xi u} provides
\begin{align*}
    |\partial_u\xi_1(\bx)u| &= |\bar{\beta}_2|\left| \left[ v'(\bu_1)p \right]_{-\bu_2(0)}^0 - \int_{-\bu_2(0)}^0 v''(\bu_1(s))\bu_1'(s)p(s)ds + v(\bu_1(-\bu_2(0)))u_2(0) \right| \leq \mathcal{X},
\end{align*}
where we used again~\eqref{def : Kolmogorov constant} through $|u_2(0)| \leq \|\pi^{>N_2}h_2\|_{C^0} \leq \theta_{N_2}$. Combining the above estimates with \eqref{eq : Z11 appli far}, and using that $\pi^{>N} = \pi^{(N,N_2]} + \pi^{>N_2}$, we obtain \eqref{eq : Z11 appli goal}.

We now turn to $\|\pi^{>N} D\mathbb{Q}(\bar{x})x\|_{H_0}$. Since $\pi^{>N}$ vanishes on the $\R^4$ component of $D\mathbb{Q}(\bar{x})x$, and since $\partial_\beta g(\bar{x})\alpha = \bar{v}_4\alpha_2$ by \eqref{def : bar v4}, we get using \eqref{def : Kolmogorov constant} that
\begin{align}\label{eq : Z1 step 0 b}
    \|\pi^{>N} D\mathbb{Q}(\bar{x})x\|_{H_0} = \|\pi^{>N}\left(\bar{v}_4\alpha_2 + \partial_u g(\bx)h\right)\Vert_{C^0}  \leq \|\pi^{>N} \bar{v}_4\Vert_{C^0}  + \sup_{\|h\|_{C^1}=1} \|\partial_t[\partial_u g(\bx)h]\Vert_{C^0} \theta_N.
\end{align}
It remains to compute $\sup_{\|h\|_{C^1}=1}\|\partial_t\left[\partial_u g(\bx)  h\right]\Vert_{C^0}$. Using \eqref{eq : computation d g u}, we have
\begin{align*}
    \partial_t\left[\partial_u g(\bx)  h\right] &=\partial_t\left[ \bar{v}_1h_1(t-\bar{u}_2) + \bar{v}_2 h_1 + \bar{v}_3 h_2 \right] \\
    & = \bar{v}_1'h_1(t-\bar{u}_2) + \bar{v}_2' h_1 + \bar{v}_3' h_2 + \bar{v}_1 (1 - \bu_2') h_1'(t-\bu_2) + \bar{v}_2 h_1' + \bar{v}_3 h_2'.
\end{align*}
This implies that
\begin{align*}
  \sup_{\|h\|_{C^1} = 1}  \| \partial_t\left[\partial_u g(\bx)  h\right]\|_{C^0} & \leq  \|(1-\bar{u}_2')\bar{v}_1\|_{C^0} + \|\bar{v}_2\|_{C^0} + \|\bar{v}_3\|_{C^0} + \|\bar{v}_1'\|_{C^0} + \|\bar{v}_2'\|_{C^0} + \|\bar{v}_3'\|_{C^0},
\end{align*}
and we conclude the proof combining the above with \eqref{eq : Z1 step 0 b}.
\end{proof}

Note that the operator $\mathbb{B} \bydef \mathbb{A}^{\leq N}D\mathbb{F}(\bx)\pi^{(N,N_2]}\mathbb{D}^\dagger$ satisfies $\mathbb{B} = \pi^{\leq N_2}\mathbb{B}\pi^{\leq N_2}$, and so does $\mathbb{D}\mathbb{B}$. Consequently, the first term of $Z_{1,1}$ is computed using Lemma \ref{lem : sampling operator norm} with $N$ replaced by $N_2$, applied to each of the four scalar blocks of the function part (the $B(C^0)$ norm of a $2\times 2$ block operator being at most the maximum over its rows of the sum of the norms of the blocks), the $\R^4$ component being treated as in Section \ref{sec:C0_operator_norms}. The Fourier coefficients of the columns of $D\mathbb{F}(\bx)\pi^{(N,N_2]}$ are enclosed exactly as those of $\pi^{\leq N}D\mathbb{F}(\bx)\pi^{\leq N}$, using the analysis of Section \ref{sec:fourier_coeff_delayed_term}. Finally, $\lambda$ is obtained from the enclosures of $\bu_2(t \pm i\bar{\rho})$ computed as in \eqref{eq:definition_Uk}, and $C_1, C_2, C_3$ as in Section \ref{sec:tail_coefficients}. In our computations, we choose $N_2 = 2N$.

\subsubsection{\boldmath$Z_2$\unboldmath~bound}

\begin{lemma}\label{lem : Z2 appli}
Let $r>0$ and let $J_r \subset \R^4$ be defined as 
\begin{align}
   J_r \bydef  [\inf \bu_1 - r, \sup \bu_1 +r] \times [\inf \bu_1 - r, \sup \bu_1 +r]  \times [\inf \bu_2 - r, \sup \bu_2 +r]  \times [\bar{\beta}_2 - r, \bar{\beta}_2 +r],
\end{align} 
and assume that $q$ and $v$ are smooth and that $v$ does not vanish on $[\inf \bu_1 - r, \sup \bu_1 +r]$, so that $\mathcal{G}$ is twice continuously differentiable on $J_r$. Denoting, for $k \in \{1,2,3\}$,
\begin{align}\label{def : Mk appli}
    \mathcal{M}_k(r) \bydef \sup_{(y,z,\omega) \in J_r} \Big( &(1+\|\bu_1'\|_{C^0})\left| \partial_1 \partial_k \mathcal{G}(y,z,\omega)\right| + \left| \partial_{z_1} \partial_k \mathcal{G}(y,z,\omega)\right| \nonumber\\
    &+ \left| \partial_{z_2} \partial_k \mathcal{G}(y,z,\omega)\right| + \left| \partial_3 \partial_k \mathcal{G}(y,z,\omega)\right| \Big),
\end{align}
where $|\cdot|$ denotes the maximum of the absolute values of all the entries of the corresponding partial derivative, let $Z_{2i}(r)$ be bounds satisfying
    \begin{align*}
        Z_{21}(r) &\geq \|v\|_{C^0} + \|\bu_2\|_{C^0} \|v'\|_{C^0}\\
        Z_{22}(r) &\geq \left(\|\bu_2\|_{C^0} + r\right)\|v'\|_{C^0} + |\bar{\beta}_2|\left( \|v'\|_{C^0}  + \|\bu_2\|_{C^0} \|v''\|_{C^0} \right)\\
        Z_{23}(r) &\geq \|v\|_{C^0} + |\bar{\beta}_2| \|v'\|_{C^0} \left(1 + \|\bu_1'\|_{C^0}\right)\\
        Z_{24}(r) &\geq \mathcal{M}_3(r)\\
        Z_{25}(r) &\geq \mathcal{M}_1(r)\left(1 + \|\bu_1'\|_{C^0} + r\right) + 2\mathcal{M}_2(r) + \|\bar{v}_1\|_{C^0}\left(2 + \|\bu_1''\|_{C^0}\right),
    \end{align*}
    where $\|v^{(k)}\|_{C^0}$ denotes the supremum of $|v^{(k)}|$ on $[\inf \bu_1 - r, \sup \bu_1 +r]$.
    Then, defining
    \begin{align}
        Z_2(r) \bydef \max\{\|\mathbb{A}\mathbb{D}^\dagger\|_{B(H_0)}, ~ \|\mathbb{D}\mathbb{A}\mathbb{D}^\dagger\|_{B(H_0)}\} \max\left\{Z_{21}(r) + Z_{22}(r) + Z_{23}(r), ~ Z_{24}(r)+Z_{25}(r)\right\},
    \end{align}
    we have that $\|\mathbb{A}(D\mathbb{F}(\bar{x} + x) - D\mathbb{F}(\bar{x}))\|_{B(H)} \leq Z_2(r)r$ for all $x \in B_r(0) \subset H$.
\end{lemma}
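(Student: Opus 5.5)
We follow the template of Lemma~\ref{lem : Z2 forcing} and Lemma~\ref{lem : Z2 autonomous}. First, since $\mathbb{I}$ in \eqref{eq : decomposition of DF} does not depend on the base point, we write $D\mathbb{F}(\bx+x)-D\mathbb{F}(\bx) = \mathbb{D}^\dagger \mathbb{K}$ with $\mathbb{K}\bydef D\mathbb{Q}(\bx+x)-D\mathbb{Q}(\bx)$. Using \eqref{eq : identity norm H with H0 H1} and \eqref{eq : identity norm H1 and D} exactly as in Step 1 of Lemma~\ref{lem : Z2 autonomous}, we get $\|\mathbb{A}\mathbb{D}^\dagger\mathbb{K}\|_{B(H)}\le \max\{\|\mathbb{A}\mathbb{D}^\dagger\|_{B(H_0)},\|\mathbb{D}\mathbb{A}\mathbb{D}^\dagger\|_{B(H_0)}\}\,\|\mathbb{K}\|_{B(H,H_0)}$. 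The $H_0$ norm is a max over the $\R^4$ component and the function component, so it suffices to show two things. The first is that the scalar rows of $\mathbb{K}$ are bounded by $(Z_{21}+Z_{22}+Z_{23})r$; this is essentially the $\xi_1$ row, since $\xi_2$ is linear and $\xi_3=\mathbb{M}g+\beta_1e_2$ is controlled by the function row through $|\mathbb{M}w|\le\|w\|_{C^0}$. The second is that the function rows $\Delta_\beta z_2+\Delta_u h$ are bounded by $(Z_{24}+Z_{25})r$, for $\|z\|_H\le1$ and $\|x\|_H\le r$.

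\textbf{Integral constraint.} Writing $\xi_1(\beta,u)=\beta_2\int_{-u_2(0)}^0 v(u_1)$, its derivative is
\[
D\xi_1(\beta,u)(\alpha,h)=\alpha_2\int_{-u_2(0)}^0 v(u_1)+\beta_2\int_{-u_2(0)}^0 v'(u_1)h_1+\beta_2 v(u_1(-u_2(0)))h_2(0).
\]
We take the difference at $\bx+x$ and $\bx$ and telescope term by term.

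The first term changes by $\alpha_2$ times an integral over an interval of length $\le\|\bu_2\|_{C^0}+r$ with integrand increment $\le\|v'\|r$, plus a boundary piece of length $\le r$ with integrand $\le\|v\|$. This gives $Z_{21}$; here $\|v^{(k)}\|$ is taken on the enlarged range, which contains all values of $\bu_1+u_1$.

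The second term gives the $(\|\bu_2\|+r)\|v'\|$ part of $Z_{22}$ from the change $b_2$ in $\beta_2$. The $|\bar\beta_2|(\|v'\|+\|\bu_2\|\|v''\|)$ part comes from the moving endpoint and from $v'(\bu_1+u_1)-v'(\bu_1)$.

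The third term yields $Z_{23}$. Here the point evaluation $v((\bu_1+u_1)(-\bu_2(0)-u_2(0)))$ is compared with $v(\bu_1(-\bu_2(0)))$ by the mean value theorem, with $|\partial_t|\le\|\bu_1'\|+r$ and a shift $\le r$. Everything is absorbed into $\|v'\|(1+\|\bu_1'\|)$ up to the stated form, and the $\|v\|$ accounts for $b_2$.

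\textbf{Function component.} The function rows are $\partial_{\beta_2}g$ and $\partial_u g$. We have $\partial_{\beta_2}g=\partial_3\mathcal G(\psi_1(u),u,\omega)$, and a mean value estimate on $J_r$ bounds its increment by $\mathcal M_3(r)\,r$. The first summand of $\mathcal M_3$ carries the factor $1+\|\bu_1'\|$ because the delayed argument $\psi_1(\bu+u)-\psi_1(\bu)$ has increment bounded by $\|u_1\|+\|\bu_1'\|\|u_2\|\le(1+\|\bu_1'\|)r$. This gives $Z_{24}$.

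For $\partial_u g(\omega,u)h=\partial_1\mathcal G\,[h_1(t-u_2)-u_1'(t-u_2)h_2]+\partial_2\mathcal G\,h$, the increment splits as follows. First, the change of $\partial_1\mathcal G$, which is $\le\mathcal M_1 r$, times $|h_1(\cdot)-u_1'(\cdot)h_2|\le1+\|\bu_1'\|+r$. Second, the change of $\partial_2\mathcal G$, which is $\le\mathcal M_2 r$ per column, where the two columns give the factor $2$. Third, $\bar v_1$ times the increment of the bracket. This increment is bounded by
\[
|h_1(t-\bu_2-u_2)-h_1(t-\bu_2)|\le \|h_1'\|r,\qquad |u_1'(\cdot)h_2|\le r,
\]
and, with the term $\bu_1'(t-\bu_2-u_2)-\bu_1'(t-\bu_2)$ estimated via $\|\bu_1''\|r$, this gives $\|\bar v_1\|(2+\|\bu_1''\|)$.

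As in Lemma~\ref{lem : Z2 autonomous}, the main care point is that we must never compare $u_1'$ at two different delayed arguments, since $u''$ is not controlled in $H$. Only $\bu_1'$ gets shifted, and $u_1'$ enters only through its sup norm.
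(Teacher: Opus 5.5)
Your proposal follows the paper's proof. It uses the same reduction to $\|D\mathbb{Q}(\bx+x)-D\mathbb{Q}(\bx)\|_{B(H,H_0)}$ and the same split into the $\xi$ rows (with $\xi_2$ affine and $\xi_3$ controlled by $Dg$) and the $Dg$ rows. It also relies on the same bound $\|\tilde y-\bar y\|_{C^0}\le(1+\|\bu_1'\|_{C^0})r$ behind $\mathcal{M}_k$. Your accounting for $Z_{22}$, $Z_{24}$, $Z_{25}$ matches the paper as well: grouping the $\partial_u g$ increment around the bracket $h_1(t-u_2)-u_1'(t-u_2)h_2$ is equivalent to the paper's split through $\tilde v_1,\tilde v_2,\tilde v_3$.

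However, two of your telescopings are ordered so that they do not give the stated constants; they leave an extra term of order $r^2$. Your remark that this is ``absorbed up to the stated form'' does not justify dropping it.

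\textbf{The $Z_{21}$ term.} Write $\tilde u=\bu+u$. Integrating the increment $v(\tilde u_1)-v(\bu_1)$ over the perturbed interval, of length $\le\|\bu_2\|_{C^0}+r$, yields $\|v\|_{C^0}+(\|\bu_2\|_{C^0}+r)\|v'\|_{C^0}$. This exceeds $Z_{21}(r)$ by $r\|v'\|_{C^0}$. Split instead as
\[
\int_{-\tilde u_2(0)}^0 v(\tilde u_1)-\int_{-\bu_2(0)}^0 v(\bu_1)=\int_{-\tilde u_2(0)}^{-\bu_2(0)} v(\tilde u_1)+\int_{-\bu_2(0)}^0 \left(v(\tilde u_1)-v(\bu_1)\right),
\]
which is the order you implicitly use for the $|\bar\beta_2|$ part of $Z_{22}$.

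\textbf{The $Z_{23}$ term.} Applying the mean value theorem to $\tilde u_1$ with $|\tilde u_1'|\le\|\bu_1'\|_{C^0}+r$ gives the factor $1+\|\bu_1'\|_{C^0}+r$, not $1+\|\bu_1'\|_{C^0}$. To get the stated factor, write
\[
\tilde u_1(-\tilde u_2(0))-\bu_1(-\bu_2(0))=u_1(-\tilde u_2(0))+\bu_1(-\tilde u_2(0))-\bu_1(-\bu_2(0)),
\]
which is bounded by $(1+\|\bu_1'\|_{C^0})r$. This shifts only $\bu_1$, exactly the principle you state at the end of your proposal.

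With these two reorderings your argument proves the lemma with the constants as stated.
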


\begin{proof}
Let $x = (\beta, u) \in B_r(0) \subset H$ and denote $\tilde{\beta}_2 \bydef \bar{\beta}_2 + \beta_2$, $\tilde{u} \bydef \bu + u$, $\tilde{y} \bydef \tilde{u}_1(t - \tilde{u}_2)$ and $\bar{y} \bydef \bu_1(t-\bu_2)$. Since $\mathbb{I}$ in \eqref{eq : decomposition of DF} does not depend on the point at which the derivative is evaluated, we have $D\mathbb{F}(\bx + x) -D\mathbb{F}(\bx) = \mathbb{D}^\dagger(D\mathbb{Q}(\bx + x) -D\mathbb{Q}(\bx))$, and hence, arguing as in Lemma \ref{lem : Z2 autonomous},
\begin{align}\label{eq : Z2 appli reduction}
    \|\mathbb{A}(D\mathbb{F}(\bx + x) -D\mathbb{F}(\bx ))\|_{B(H)} \leq \max\{\|\mathbb{A}\mathbb{D}^\dagger\|_{B(H_0)}, ~ \|\mathbb{D}\mathbb{A}\mathbb{D}^\dagger\|_{B(H_0)}\}\, \|D\mathbb{Q}(\bx + x) -D\mathbb{Q}(\bx)\|_{B(H,H_0)}.
\end{align}
Moreover, since $H_0$ is equipped with the max norm,
\begin{align}\label{eq : Z2 appli split}
    \|D\mathbb{Q}(\bx + x) -D\mathbb{Q}(\bx)\|_{B(H,H_0)} \leq \max\left\{ \|D\xi(\bx +x) - D\xi(\bx)\|_{B(H,\R^4)}, ~ \|Dg(\bx+x) - Dg(\bx)\|_{B(H,C^0)} \right\}.
\end{align}
Moreover, we notice that
\begin{align*}
    \|\tilde{y} - \bar{y}\|_{C^0} \leq \|u_1\|_{C^0} + \|\bu_1'\|_{C^0} \|u_2\|_{C^0} \leq \left(1 + \|\bu_1'\|_{C^0}\right) r,
\end{align*}
so that, by the mean value inequality and the definition \eqref{def : Mk appli} of $\mathcal{M}_k$,
\begin{equation}\label{eq : Z2 appli mean value}
    \left\| \partial_k \mathcal{G}(\tilde{y},\tilde{u},\tilde{\beta}_2) - \partial_k \mathcal{G}(\bar{y},\bu,\bar{\beta}_2)\right\|_{C^0} \leq \mathcal{M}_k(r) r, \qquad k \in \{1,2,3\},
\end{equation}
since $(\tilde{y},\tilde{u},\tilde{\beta}_2)$ and $(\bar{y},\bu,\bar{\beta}_2)$ both take values in the convex set $J_r$.

We first estimate $\|D\xi(\bx +x) - D\xi(\bx)\|_{B(H,\R^4)}$. Since $\R^4$ is equipped with the max norm, we have
\begin{align}
    \|D\xi(\bx +x) - D\xi(\bx)\|_{B(H,\R^4)} \leq \max_{i \in \{1,2,3\}}\|D\xi_i(\bx +x) - D\xi_i(\bx)\|_{B(H,\R^{d_i})},
\end{align}
with $d_1 = d_2 = 1$ and $d_3 = 2$. First, notice that $\xi_2$ is affine, so that $\|D\xi_2(\bx +x) - D\xi_2(\bx)\|_{B(H,\R^{d_2})} = 0$. Then, given $\hat{x} = (\alpha, h) \in H$ with $\|\hat{x}\|_H \leq 1$, we have
\begin{align}
    |D\xi_1(\bx +x)\hat{x} - D\xi_1(\bx)\hat{x}| \leq |\partial_{\beta_2}\xi_1(\bx +x)\alpha_2 - \partial_{\beta_2}\xi_1(\bx)\alpha_2| + \sum_{i=1}^2 |\partial_{u_i}\xi_1(\bx +x)h_i - \partial_{u_i}\xi_1(\bx)h_i |. 
\end{align}
Looking at the first term, and using that $|\alpha_2| \leq 1$, we get
\begin{align*}
    |\partial_{\beta_2}\xi_1(\bx +x)\alpha_2 - \partial_{\beta_2}\xi_1(\bx)\alpha_2| &\leq \left| \int^0_{-\tilde{u}_2(0)} v(\tilde{u}_1(s)) ds - \int^0_{-\bu_2(0)} v(\bu_1(s)) ds \right|\\
    &\leq \|u_2\|_{C^0} \|v\|_{C^0} + \|\bu_2\|_{C^0} \|v'\|_{C^0} \|u_1\|_{C^0} \leq Z_{21}(r)r.
\end{align*}
For the second term, using \eqref{eq : computation d xi u} and $\|h_1\|_{C^0} \leq 1$,
\begin{align*}
|\partial_{u_1}\xi_1(\bx +x)h_1 - \partial_{u_1}\xi_1(\bx)h_1| &=   \left|\tilde{\beta}_2\int^0_{-\tilde{u}_2(0)} v'(\tilde{u}_1(s)) h_1(s) ds - \bar{\beta}_2\int^0_{-\bu_2(0)} v'(\bu_1(s)) h_1(s) ds \right|\\
&\leq |\beta_2| \left(\|\bu_2\|_{C^0} + r\right)\|v'\|_{C^0}  + |\bar{\beta}_2| \left(\|u_2\|_{C^0} \|v'\|_{C^0}  + \|\bu_2\|_{C^0} \|v''\|_{C^0} \|u_1\|_{C^0} \right)\\
&\leq  Z_{22}(r)r,
\end{align*}
where we split the difference as $\beta_2\int^0_{-\tilde{u}_2(0)} v'(\tilde{u}_1) h_1 + \bar{\beta}_2\big(\int^0_{-\tilde{u}_2(0)} v'(\tilde{u}_1) h_1 - \int^0_{-\bu_2(0)} v'(\bu_1) h_1\big)$. Similarly, for the third term,
\begin{align*}
    |\partial_{u_2}\xi_1(\bx +x)h_2 - \partial_{u_2}\xi_1(\bx)h_2 | &= \left|\tilde{\beta}_2 v(\tilde{u}_1(-\tilde{u}_2(0)))h_2(0)-\bar{\beta}_2 v(\bu_1(-\bu_2(0)))h_2(0)\right|\\
    &\leq |\beta_2| \|v\|_{C^0}  + |\bar{\beta}_2| \|v'\|_{C^0} \left(\|u_1\|_{C^0} + \|\bu_1'\|_{C^0} \|u_2\|_{C^0}\right) \leq Z_{23}(r)r.
\end{align*}
Summing the three previous estimates, we obtain $\|D\xi_1(\bx +x) - D\xi_1(\bx)\|_{B(H,\R)} \leq (Z_{21}(r)+Z_{22}(r)+Z_{23}(r))r$. Finally, since $\xi_3(x) = \mathbb{M}g(\beta_2,u) + \beta_1e_2$ and $|\mathbb{M}w|_\infty \leq \|w\|_{C^0}$, we have
\begin{align*}
        \|D\xi_3(\bx +x) - D\xi_3(\bx)\|_{B(H,\R^2)}  \leq \|Dg(\bx +x) - Dg(\bx)\|_{B(H,C^0)},
\end{align*}
so that it only remains to estimate the latter. Since $\partial_{\beta_1}g = \partial_{\beta_3}g = \partial_{\beta_4}g = 0$ and $|\alpha_2| \leq 1$, we have
\begin{align*}
    \|Dg(\bx +x) - Dg(\bx)\|_{B(H,C^0)} \leq \| \partial_{\beta_2}g(\bx +x) - \partial_{\beta_2}g(\bx)\|_{C^0} + \sup_{\|h\|_{C^1} \leq 1}\| \partial_{u}g(\bx +x)h - \partial_{u}g(\bx)h \|_{C^0}.
\end{align*}
Recalling \eqref{def : bar v4}, the first term is exactly the case $k=3$ of \eqref{eq : Z2 appli mean value}, hence
\begin{align*}
    \| \partial_{\beta_2}g(\bx +x) - \partial_{\beta_2}g(\bx)\|_{C^0} \leq \mathcal{M}_3(r)r \leq Z_{24}(r)r.
\end{align*}
Note that the sup over $J_r$ involved in $\mathcal{M}_k$ is bounded with interval arithmetic, by differentiating the expressions of $\partial_1\mathcal{G}, \partial_2\mathcal{G}$ and $\partial_3\mathcal{G}$ given above and evaluating the resulting second derivatives on a subdivision of $J_r$ into small boxes.

We now turn to the second term. Denoting by $\tilde{v}_1, \tilde{v}_2, \tilde{v}_3$ the quantities defined below \eqref{eq : computation d g u}, but evaluated at $\bx + x$ instead of $\bx$, we have
\begin{align*}
    \partial_{u}g(\bx +x)h - \partial_{u}g(\bx)h = \left(\tilde{v}_1 h_1(t-\tilde{u}_2) - \bar{v}_1 h_1(t-\bu_2)\right) + \left(\tilde{v}_2 - \bar{v}_2\right)h_1 + \left(\tilde{v}_3 - \bar{v}_3\right)h_2.
\end{align*}
Using \eqref{eq : Z2 appli mean value} with $k=1$, the mean value inequality and $\|h\|_{C^1} \leq 1$, the first term satisfies
\begin{align*}
    \left\|\tilde{v}_1 h_1(t-\tilde{u}_2) - \bar{v}_1 h_1(t-\bu_2)\right\|_{C^0} \leq \|\tilde{v}_1 - \bar{v}_1\|_{C^0} \|h_1\|_{C^0} + \|\bar{v}_1\|_{C^0} \|h_1'\|_{C^0} \|u_2\|_{C^0} \leq \left(\mathcal{M}_1(r) + \|\bar{v}_1\|_{C^0}\right) r.
\end{align*}
Since $\bar{v}_2$ and $\tilde{v}_2$ are the first columns of $\partial_2\mathcal{G}$ evaluated at $(\bar{y},\bu,\bar{\beta}_2)$ and $(\tilde{y},\tilde{u},\tilde{\beta}_2)$ respectively, \eqref{eq : Z2 appli mean value} with $k=2$ yields $\|\tilde{v}_2 - \bar{v}_2\|_{C^0} \leq \mathcal{M}_2(r)r$. Finally, recalling that $\bar{v}_3 = -\bar{v}_1 \bu_1'(t-\bu_2) + \partial_2\mathcal{G}(\bar{y},\bu,\bar{\beta}_2)_{col(2)}$, we get
\begin{align*}
    \|\tilde{v}_3 - \bar{v}_3\|_{C^0} &\leq \|\tilde{v}_1 - \bar{v}_1\|_{C^0} \|\tilde{u}_1'\|_{C^0} + \|\bar{v}_1\|_{C^0} \left\| \tilde{u}_1'(t-\tilde{u}_2) - \bu_1'(t-\bu_2) \right\|_{C^0} + \mathcal{M}_2(r)r\\
    &\leq \mathcal{M}_1(r)\left(\|\bu_1'\|_{C^0} + r\right)r + \|\bar{v}_1\|_{C^0}\left(\|u_1'\|_{C^0} + \|\bu_1''\|_{C^0} \|u_2\|_{C^0}\right) + \mathcal{M}_2(r)r\\
    &\leq \left( \mathcal{M}_1(r)\left(\|\bu_1'\|_{C^0} + r\right) + \|\bar{v}_1\|_{C^0}\left(1 + \|\bu_1''\|_{C^0}\right) + \mathcal{M}_2(r) \right) r.
\end{align*}
Summing the three previous estimates, we obtain
\begin{align*}
    \sup_{\|h\|_{C^1} \leq 1}\| \partial_{u}g(\bx +x)h - \partial_{u}g(\bx)h \|_{C^0} &\leq \left( \mathcal{M}_1(r)\left(1 + \|\bu_1'\|_{C^0} + r\right) + 2\mathcal{M}_2(r) + \|\bar{v}_1\|_{C^0}\left(2 + \|\bu_1''\|_{C^0}\right) \right) r\\
    &\leq Z_{25}(r)r,
\end{align*}
which concludes the proof, combining the above with \eqref{eq : Z2 appli reduction} and \eqref{eq : Z2 appli split}.
\end{proof}

\subsection{Computer-assisted proof of existence}\label{sec : CAP threshold}

\begin{theorem}\label{th : CAP threshold}
    Let $a = 3$, $b = 3$, $\mu = 0.3$, $\gamma = 0.13$, $m = 56$, $v^- = 0.5$, $v^+ = 0.2$, $\theta_v = 1$ and $q^- = q^+ = 1$, and let $\bx = (\bar{\beta},\bu)$ be the approximate solution represented in Figure~\ref{fig : threshold}. Then, there exist $\tilde{\beta}_2 > 0$ and $\tilde{u} = (\tilde{u}_1,\tilde{u}_2) \in C^1_{per}([-\pi,\pi])$ such that
    \begin{align*}
        |\tilde{\beta}_2 - \bar{\beta}_2| \leq r_0 \quad \text{ and } \quad \|\tilde{u} - \bu\|_{C^1} \leq r_0, \quad \text{ where } r_0 \bydef 2 \times 10^{-9},
    \end{align*}
    and $(w,\tau) : s \mapsto \left(\tilde{u}_1\left(\frac{s}{\tilde{\beta}_2}\right), \tilde{\beta}_2\tilde{u}_2\left(\frac{s}{\tilde{\beta}_2}\right)\right)$ is a $2\pi\tilde{\beta}_2$-periodic solution to \eqref{eq : appli original}.
\end{theorem}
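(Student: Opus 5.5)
The proof follows the template of Theorems~\ref{th : CAP forcing} and~\ref{th : CAP autonomous}: we verify the hypotheses of Theorem~\ref{thm:radPolyBanach} for the map $\mathbb{F}$ of this section on $H = \R^4 \times C^1_{per}([-\pi,\pi])$, and then transfer the resulting zero to a periodic solution of \eqref{eq : appli original} via Lemma~\ref{lem : zeros of F threshold}.

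\textbf{Step 1: analyticity domain and enclosures.} Starting from $\bx=(\bar\beta,\bu)$ with $\bu=(\bw,\btau)$ of order $70$, we choose a truncation $N$ (a few hundred), $N_2 = 2N$, $N_{\tt FFT}$ a power of two exceeding $2N_2$, and a small $\bar\rho$. We apply Lemma~\ref{lem : avoiding roots} with interval arithmetic to each root listed in \eqref{eq : roots denominators threshold}, with $h=\bw$ and $h=\bw(\cdot-\btau)$. This certifies that $\bar g_1,\bar g_2,\bar v_1,\dots,\bar v_4$, $v(\bw)$ and $v'(\bw)$ are analytic on $S_{\bar\rho}$. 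The constants $C$ of \eqref{eq : tail estimates} (and $C_1,C_2,C_3$) then follow from the boundary enclosures, and the low-order coefficients from \eqref{eq:aliasing_control}. We also compute $\lambda$ satisfying \eqref{eq : condition lambda} from enclosures of $\btau(t\pm i\bar\rho)$.

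\textbf{Step 2: the bounds.} $Y$ is obtained as in Lemma~\ref{lem : Y bound toy model}, the $\R^4$ component including the integral $\xi_1$, which is evaluated through the $\vartheta_n$. $Z_1$ comes from Lemma~\ref{lem : Z1 appli}. The finite blocks $Z_{1,0}$ and the first term of $Z_{1,1}$ are handled by Lemma~\ref{lem : from infinite to finite} and Lemma~\ref{lem : sampling operator norm}, applied blockwise with $N$ replaced by $N_2$. The matrices of $\mathbb{L}_3(\bu_2)$ are obtained column by column with rigorous FFTs as in \eqref{def : operator L3}, and $\theta_N$, $\theta_{N_2}$ come from Proposition~\ref{prop : computation Kolmogorov constant}. $Z_2$ comes from Lemma~\ref{lem : Z2 appli} with some $r_*$ (say $10^{-4}$). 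Here the suprema $\mathcal{M}_k(r_*)$ over $J_{r_*}$ are bounded by subdividing $J_{r_*}$ into boxes, after checking that $v\neq 0$ on the relevant interval; this holds because $v$ is a Hill function with positive parameters. Since every quantity is nondecreasing in $r$, $Z_2(r_*)$ serves for all $r\le r_*$. We then check that $Y+Z_1r_0+\frac12 Z_2r_0^2\le r_0$ and $Z_1+Z_2r_0<1$ with $r_0=2\times10^{-9}$.

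\textbf{Step 3: conclusion.} Injectivity of $\mathbb{A}$ follows exactly as in the proof of Theorem~\ref{th : CAP forcing}. Theorem~\ref{thm:radPolyBanach} then gives a zero $\tilde x$ with $\|\tilde x-\bx\|_H\le r_0$. Hence $|\tilde\beta_2-\bar\beta_2|\le r_0$, so $\tilde\beta_2>0$ because $\bar\beta_2\approx 1.79$. Since $v>0$ on $(0,\infty)$ (indeed on a neighbourhood of the range of $\tilde u_1$), Lemma~\ref{lem : zeros of F threshold} shows that $\tilde\beta_1=0$ and that $(w,\tau)$ solves \eqref{eq : appli original}.

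\textbf{Main obstacle.} The difficult step is obtaining $Z_1<1$. With $m=56$ the Hill function is very steep, so $\|\partial_ug(\bx)\|$ and the $Z_{1,2}$ terms are large. Since $\theta_N$ only decays like $\ln N/N$, this forces a large $N$, which in turn makes the $C^0$ operator norms of the large $\mathbb{A}^{\le N}$ matrices costly. I would first rely on the frequency-gap splitting of Lemma~\ref{lem : Z1 appli} together with the sampling bound of Lemma~\ref{lem : sampling operator norm}, increasing $N$ until $Z_{1,2}$ is well below $1$. I would then tune $\bar\rho$, balancing the size of $C$ against the exponential gains $\bar\nu^{-\lambda N_2}$ in $\mathcal{E}$.
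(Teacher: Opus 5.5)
Your proposal follows the paper's proof essentially step for step: analyticity on $S_{\bar\rho}$ via Lemma~\ref{lem : avoiding roots}; the bounds $Y$, $Z_1$, $Z_2$ from Lemmas~\ref{lem : Y bound toy model}, \ref{lem : Z1 appli} and~\ref{lem : Z2 appli}, with $Z_2(r_*)$ valid for all $r\le r_*$ by monotonicity; injectivity of $\mathbb{A}$ as before; and the conclusion via $\bar\beta_2>0$, $v>0$ and Lemma~\ref{lem : zeros of F threshold}. The only divergence is in the numerical parameters, where the stated radius leaves almost no slack: the paper uses $N=1000$, $N_{\tt FFT}=2^{14}$, $\bar\nu=1.05$, $r_*=10^{-3}$ and obtains $Y\le 8.54\times10^{-10}$, $Z_1\le 0.569$ and $Z_2(r_*)\le 3.36\times10^{5}$, so with that $Y$ the radius $r_0=2\times10^{-9}$ requires $Z_1\lesssim 0.57$, which a truncation of only a few hundred modes (where $\theta_N$ is substantially larger) would likely not achieve.
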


\begin{proof}
 $\bu$ is constructed as a trigonometric polynomial of order $70$.
    We choose $\bar{\nu} = 1.05$, that is $\bar{\rho} = \ln(1.05)$, and we verify using Lemma \ref{lem : avoiding roots} that $\bar{g}_1, \bar{g}_2$ and $\bar{v}_1, \dots, \bar{v}_4$ are analytic on $S_{\bar{\rho}}$. Choosing the truncation size $N = 1000$,  we construct $\mathbb{A}$ as described in Section \ref{sec : approximate objects and projections}. Then, choosing $N_{\tt FFT} = 2^{14}$, $N_2 = 2N$ and $r_* = 10^{-3}$, we compute the bounds of Lemmas \ref{lem : Y bound toy model}, \ref{lem : Z1 appli} and \ref{lem : Z2 appli}, using Lemma~\ref{lem : from infinite to finite} with $\|\cdot\, e\|_{H_0}$ replaced by $\|\cdot\,(0,(1,0))\|_{H_0} + \|\cdot\,(0,(0,1))\|_{H_0}$ since $u$ has two components, and we obtain
    \begin{align*}
        Y \leq 8.54 \times 10^{-10}, \quad Z_1 \leq 0.569 \quad \text{ and } \quad Z_2(r_*) \leq 3.36 \times 10^{5}.
    \end{align*}
    Since the right-hand sides of the inequalities defining $Z_{21}(r), \dots, Z_{25}(r)$ are non-decreasing in $r$, Lemma \ref{lem : Z2 appli} provides \eqref{eq:Z2_radPolyBanach} with $Z_2 = Z_2(r_*)$. As in the proof of Theorem \ref{th : CAP autonomous}, $\mathbb{A}$ is injective and, since $r_0 \leq r_*$, $Y + Z_1 r_0 + \frac{1}{2}Z_2 r_0^2 \leq r_0$ and $Z_1 + Z_2 r_0 < 1$, Theorem \ref{thm:radPolyBanach} provides a zero $\tilde{x} = (\tilde{\beta},\tilde{u})$ of $\mathbb{F}$ such that $\|\tilde{x} - \bx\|_H \leq r_0$. Finally, $\bar{\beta}_2 \geq 1.78$ and $v >0$ with the above choice of parameters, so that $\tilde{\beta}_2 > 0$, and we conclude using Lemma \ref{lem : zeros of F threshold}.
\end{proof}

\section{Conclusion and perspectives} \label{sec:conclusion}

In this work, we developed a computer-assisted framework for proving the existence of periodic solutions of SDDEs, which does not rely on perturbations of ordinary or constant-delay differential equations. The approach combines a zero-finding problem posed in the low-regularity space $C^1$, a Newton--Kantorovich argument, rigorous enclosures of the Fourier coefficients of state-dependent compositions, and sharp bounds on finite-dimensional operator norms on $C^0$. We illustrated it on a periodically forced toy model (Theorem~\ref{th : CAP forcing}), on an autonomous scalar SDDE with two state-dependent delays (Theorem~\ref{th : CAP autonomous}), and on a threshold-type SDDE arising in gene regulation (Theorem~\ref{th : CAP threshold}).

A natural direction for future work concerns solutions with limited smoothness. As suggested by the numerical results of Section~\ref{sec : CAP forcing}, periodic solutions of SDDEs may fail to be analytic, in which case the constant $C$ in the geometric tail estimate \eqref{eq : tail estimates} grows rapidly as the order of the approximate solution increases. Since the Newton--Kantorovich argument itself is formulated in $C^1$ and does not require analyticity, this limitation only comes from the enclosure of the Fourier coefficients of Section~\ref{sec:fourier_coeff_delayed_term}. One could replace \eqref{eq : tail estimates} by an algebraic estimate of the form
\[
|V_n| \le \frac{C_{\rm alg}}{|n|^s}, \quad n \neq 0, \qquad \text{with } C_{\rm alg} \bydef \frac{1}{2\pi}\|v^{(s)}\|_{L^1},
\]
obtained by integrating by parts $s$ times. The derivatives of $v$ are only needed on the real line, where they can be enclosed with the same grid-based techniques, and $C_{\rm alg}$ remains bounded as the order of the approximate solution increases, provided the solution is $C^s$. This is not restrictive here, since the periodic solutions obtained in this paper are $C^\infty$, by a bootstrapping argument (see Lemma~\ref{lem : zeros of F are solutions}), so that any $s \ge 2$ can be used. Such an approach is well suited to solutions which are $C^\infty$ but not analytic, a situation which is known to occur for delay-differential equations with analytic nonlinearity and analytic time-dependent delay \cite{MR3229655}. Combined with the discrete Poisson summation formula as in Section~\ref{sec:finite_coefficients} (see \cite{vandenberg2026spectral}), this would yield an algebraic analogue of the aliasing bound \eqref{eq:aliasing_control}, with an error of order $C_{\rm alg}(N_{\tt FFT}-|n|)^{-s}$ instead of $C\bar\nu^{|n|-N_{\tt FFT}}$, at the price of a larger $N_{\tt FFT}$. Other natural extensions include the rigorous continuation of branches of periodic orbits with respect to parameters, and the treatment of more general state-dependent delays.

\section*{Acknowledgments}

This project started during the workshop \emph{Towards rigorous results in state-dependent delay equations}, held at the Lorentz Center in Leiden in March 2024. We are grateful to the Lorentz Center for its support and hospitality, and to the participants of the workshop for stimulating discussions. In particular, we thank Jan Sieber for several fruitful conversations, especially about how to properly set up the zero-finding problem \eqref{eq:the_map}.

MB and MC were supported by CAPPS (\mbox{ANR-23-CE40-0004-01}).  MC was also supported by the FMJH
(\mbox{ANR-22-EXES-0013}). JPL was supported by the Natural Sciences and Engineering Research Council of Canada (NSERC).

\section*{AI disclosure statement}




The paper and the accompanying code \cite{code_proof} were written by the authors. Anthropic's Claude Opus 5.5 was used to proofread the paper and the code. 
The only part of the paper for which more significant AI assistance was used is Lemma~\ref{lem : sampling operator norm}. When the approach based on direct interval-valued FFT enclosures discussed in Section~\ref{sec:C0_operator_norms} proved very expensive for the example of Section~\ref{sec:threshold_SDDE}, we asked Claude for a cheaper alternative, and its suggestions led to Lemma~\ref{lem : sampling operator norm}. The authors take full responsibility for the final content of the paper and code.

\bibliographystyle{unsrt}
\bibliography{references}

\end{document}